\newtheorem{theorem}{Theorem}[section]
\newtheorem{assumption}[theorem]{Assumption}
\newtheorem{claim}[theorem]{Claim}
\newtheorem{lemma}[theorem]{Lemma}
\newtheorem{proposition}[theorem]{Proposition}
\newenvironment{proof}[1][Proof]{\noindent\textit{#1.} }{\hfill \rule{0.5em}{0.5em}}
\newcommand{\R}{\mathbb{R}}
\newcommand{\C}{\mathbb{C}}
\renewcommand{\eqref}[1]{{\rm(\ref{#1})}}
\renewcommand{\d}{{\rm d}}
\title{\textsc Optimal intervention strategies of staged progression HIV infections through an age-structured model with probabilities of ART drop out
}
\author{Mboya Ba$^{a,*}$, Ramsès Djidjou-Demasse$^{b,\ddagger}$, Mountaga Lam$^{a}$,\\
	Jean-Jules Tewa$^{c}$ \\
	{\small $^{a}$ University Cheikh Anta Diop, Department of Mathematics and Informatics}\\
	{\small Faculty of Science and Technic, Dakar, Sénégal}\\
	{\small $^{b}$ MIVEGEC, IRD, CNRS, Univ. Montpellier, Montpellier, France}\\
	{\small $^{c}$ University of Yaounde I, National Advanced School of Engineering, Yaoundé, Cameroon}\\
	{\small $^*$ UMI 209 IRD\&UPMC UMMISCO}\\
	{\small $^\ddagger$ Author for correspondence: ramses.djidjou-demasse@umontpellier.fr}
}
\begin{document}
\maketitle

\begin{abstract} 
	In this paper, we construct a model to describe the transmission of HIV in a homogeneous host population. By considering the specific mechanism of HIV, we derive a model structured in three successive stages: (i) primary infection, (ii) long phase of latency without symptoms and (iii) AIDS. Each HIV stage is stratified by the duration for which individuals have been in the stage, leading to a continuous age-structure model. In the first part of the paper, we provide a global analysis of the model depending upon the basic reproduction number $\mathcal{R}_0$. When $\mathcal{R}_0\le1$, then the disease-free equilibrium is globally asymptotically stable and the infection is cleared in the host population. On the contrary, if $\mathcal{R}_0>1$, we prove the epidemic's persistence with the asymptotic stability of the endemic equilibrium. By performing the sensitivity analysis, we then determine the impact of control-related parameters of the outbreak severity. For the second part, the initial model is extended with intervention methods. By taking into account ART interventions and the probability of treatment drop out, we discuss optimal interventions methods which minimize the number of AIDS cases.
\end{abstract}

\noindent
{\bf Keywords}: HIV, ART, Age structure, Non-linear dynamical system, Stability, Optimal control

\noindent
{\bf MSC2010}: 35Q92, 49J20, 35B35, 92D30

\section{Introduction}
\paragraph{Biology and evolution of HIV infection.}
The human immunodeficiency virus (HIV) is a virus that attacks the immune system, the body's defense against infections. HIV weakens your immune system by destroying cells that are essential for fighting diseases and infections. Without treatment, the immune system becomes too weak. A chronic progressive disease called AIDS (Acquired Immunodeficiency Syndrome) then appears. The situation of the epidemic in the world shows only a stabilization of the number of new cases diagnosed, although undeniable efforts have been made in recent years \cite{onusida}. The complexity of HIV infection is linked to many elements that particularly involve the specific mechanism of infection \cite{destruction}.  In the absence of treatment, the HIV infection goes through three successive stages corresponding to T4 cell count ranges: (i) primary infection, (ii) long phase of latency without symptoms, and (iii) AIDS \cite{OMS}. {\it Primary infection:} The risk of transmission is particularly high during this phase because of the high viral load at this stage of the infection \cite{pathology,Hollingsworth2008,Steven}. This stage is characterized by occurrence of symptoms similar to those of a cold or a mild influenza (fever, rashes, fatigue, headaches) which disappear spontaneously after few weeks \cite{Swiss,pathology}. {\it Phase of latency without symptoms:}
Generally, HIV-positive people do not experience any particular problem at this stage for many years and can lead normal lives, although the virus is spreading insidiously in the body and permanently mistreats the immune system \cite{Swiss,pathology}.
{\it AIDS stage:} Because of its constant solicitation, the immune system becomes weaker and weaker until it can no longer defend itself against many pathogens agent and prevent the occurrence of serious or fatal diseases \cite{Swiss, pathology}.

\paragraph{Waiting time within HIV infection stages.} Staged progression models have been proposed to investigate the transmission dynamics of HIV \cite{PerelsonAndNelson1999,Lin,Hyman,PerelsonAndNelson1999,McCluskey,Guo2011,Gumel2006,Eaton2014,Hyman-Sanley1999}. Here, we go through the same direction by  modeling the progression through three HIV stages described previously (which are enough for the practical interpretation of the HIV stages up to date). However, none of the aforementioned works deal with a continuous stage-structured model as in the context of this work. Indeed, here we consider the duration $a\ge0$ spent in a given HIV stage as a continuous variable (not to be confused with the time since infection as in \cite{Shen2015}, or with age-group such as "youths", "adults", etc). Further, our model formulation is well adapted for the dynamics of HIV infection without any treatment: reference values for duration of HIV stages 1 and 2 are 2.90 (range 1.23-6) and 120 (range 108-180) months respectively \cite{Hollingsworth2008}. Moreover, the model proposed here is suitable for the specific mechanisms of antiretroviral therapy, ART for short, which is the use of HIV medicines to slow down the progress of the infection. In fact, ART help people with HIV live longer by extending the time spent in a given HIV stage \cite{info}.


\paragraph{The model.}
Here we formulate a model structured by the duration $a$ for which individuals have been in a given HIV stage. The model is called {\it age-structured} model thereafter. The host population is divided into four compartments: $S(t)$ denotes the density of susceptible individuals at time $t$, while $i_1(t,a)$, $i_2(t,a)$ and $i_3(t,a)$ respectively denote the density of the primary infected, infected in asymptomatic stage and infected in AIDS stage at time $t$ who have been in that stage for duration $a>0$ (the time spent in the stage). The transfer diagram reads:
\begin{figure}[H]
	\includegraphics[width=1\textwidth]%
	{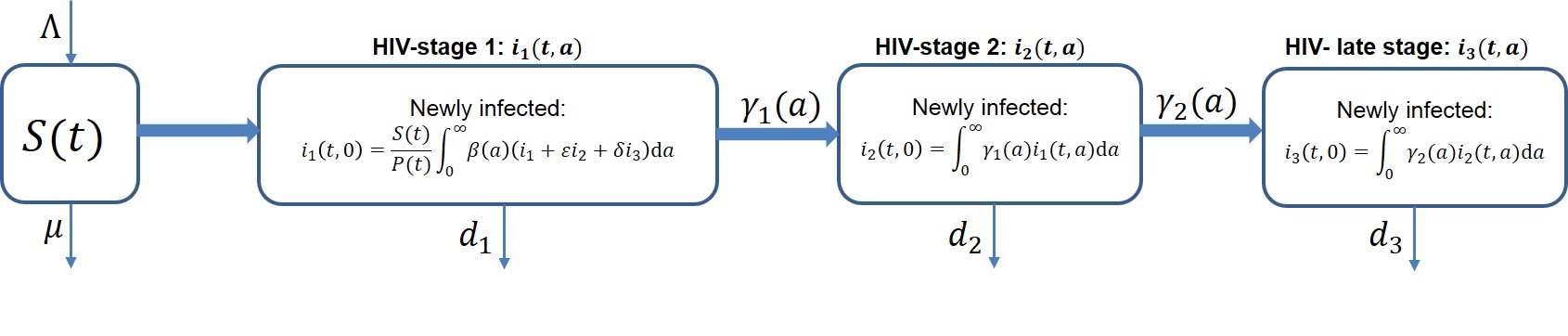}
\end{figure}\vspace{-.8cm}
The parameter $\Lambda$ represents the positive contribution entering into the susceptible population and $\mu$ is the natural death rate of susceptible individuals. The function $\beta= \beta(a)$ is the rate of being infectious after a time $a$ within HIV stage 1, $\varepsilon>0$ and $\delta\ge 0$ are the reduced transmission rate in HIV stages 2 and 3. The death rate of infectious individuals $i_1$, $i_2$ and $i_3$,  are respectively denoted by functions $d_1=d_1(a)$, $d_2=d_2(a)$ and $d_3=d_3(a)$. Obviously, infectious individuals are assumed to have an increased rate of death ({\it i.e.} $d_j\ge \mu$). The rate of the disease progression from infectious class $i_1$ to the infectious class $i_2$ is $\gamma_1=\gamma_1(a)$, as well as $\gamma_2= \gamma_2(a)$ the rate of progression from infectious class $i_2$ to the infectious class $i_3$.

The model we shall consider in this work reads as the following age structured system of equations, 
\begin{equation}
\begin{cases}
\dot S(t)=\Lambda-\mu S(t)-\frac{S(t)}{P(t)} \int_0^\infty \beta(a) \left(i_1(t,a)+\varepsilon i_2(t,a) +\delta i_3(t,a)\right) \d a, \\
i_1(t,0)= \frac{S(t)}{P(t)}  \int_0^\infty \beta(a) \left(i_1(t,a)+\varepsilon i_2(t,a) +\delta i_3(t,a)\right) \d a,\\
\left(\partial_t+\partial_a\right) i_1(t,a)= - \left(\gamma_1(a)+ d_1(a)\right) i_1(t,a),\\
i_2(t,0)=\int_0^\infty \gamma_1(a) i_1(t,a) \d a,\\
\left(\partial_t+\partial_a\right) i_2(t,a)= - \left(\gamma_2(a)+ d_2(a)\right) i_2(t,a),\\
i_3(t,0)=\int_0^\infty \gamma_2(a) i_2(t,a) \d a,\\
\left(\partial_t+\partial_a\right) i_3(t,a)= -  d_3(a) i_3(t,a),\\
\text{with}\\
P(t)= S(t) + \int_0^\infty \left(i_1(t,a)+i_2(t,a)+i_3(t,a) \right)\d a,
\end{cases}\label{model1}
\end{equation}
coupled with the initial condition
\begin{equation}\label{eq-initial}
S(0)=S_0, i_1(0,a)=i_{10}(a), i_2(0,a)=i_{20}(a), i_3(0,a)=i_{30}(a).
\end{equation}

Furthermore, model parameters are assumed to satisfy the following hypotheses:
\begin{assumption}:\label{asym1}
	\begin{enumerate}
		\item $\Lambda,\mu>0$; $\beta , d_1 , d_2, d_3 , \gamma_1, \gamma_2,  \in L_+^\infty (0, \infty)$ and $d_j(\cdot) \ge \mu$.
		\item $\beta$ and $\gamma_j$ are Lipschitz continuous almost everywhere on $\R_+$.
		\item For any $a > 0$, there exists $a(\beta); a(\gamma_j)
 > a$, with $j=1,2,3$; such that $\beta$ is positive in a neighbourhood of $a(\beta)$ and $\gamma_j$ is positive in a neighbourhood of $a(\gamma_j)$.
	\end{enumerate}
\end{assumption}

\paragraph{Aims.} In the first part of this article we shall study some dynamical properties of problem \eqref{model1}-\eqref{eq-initial}. We shall first compute the basic reproduction number which determines the outcome of the disease and study the stability of the model equilibria. We shall perform the global sensitivity analysis of the HIV late stage.  That is to help us know parameters that are most influential in determining disease dynamics, {\it i.e.} AIDS cases.

In the second part of this article, we then introduce intervention strategies into model \eqref{model1} aiming to optimally reduce AIDS cases in the host population.  Several HIV intervention options (called controls) do exist. Individuals do not take the same combination of medicines because the infection stages differ \cite{info}. Therefore, interventions strategies introduce in the model of this note are stage specific. Furthermore, there have been numerous works on optimal control of age-structured populations \cite{Anita2000, DaPrato-Iannelli1994, Barbu-Iannelli1999, Fister,Lenhart2007,Ram,SilvaFilho2018,Ekeland1974,Feichtinger2003,Numfor2015} and references cited therein.

This paper is organized as follows. Section \ref{prelim-first-model} describes preliminaries results of model \eqref{model1}: existence of semiflow and asymptotic behaviour. We derive the global sensitivity analysis, describe model parameters and the typical model simulation in Section \ref{sec-parameters-simu-SA}. In Section \ref{sec-optimal-model}, we extend model \eqref{model1} with intervention strategies and characterize the necessary optimality condition. We then discuss the effectiveness of those interventions and some model hypothesis and limitations in Section \ref{sec-discussion}. Sections \ref{sec-proof-semiflow}-\ref{sec-control-existence} are devoted to proofs of our main results.

\section{Preliminaries} \label{prelim-first-model}
The aim of this section is to provide some preliminary remarks to system \eqref{model1}. Let us introduce the following notations, for $a\ge 0$, 
\begin{equation*}\label{eq-Dj}
\begin{split}
& D_3(a)= \exp \left(-\int_0^a \left(d_3(\sigma)\right) \d \sigma\right), D_j(a)= \exp \left(-\int_0^a \left(\gamma_j(\sigma) + d_j(\sigma)\right) \d \sigma\right); j=1,2,\\
& \overline{D}_j= \int_0^\infty D_j(a)\d a, \quad \Omega_j= \int_0^\infty \beta(\sigma) D_j(\sigma) \d \sigma;\quad j=1,2,3,\\
&\Gamma_j= \int_0^\infty \gamma_j(\sigma) D_j(\sigma) \d \sigma;\quad  j=1,2.
\end{split}
\end{equation*}

\subsection{Existence of semiflow}
We first formulate system \eqref{model1} composed by $ \left(S,i_1,i_2,i_3\right)$ in an abstract Cauchy problem. For that aim, we introduce the Banach space $\mathcal{X}=\mathbb{R} \times\mathbb{R}^3\times L^1(0,\infty,\mathbb{R}^3)$ endowed with the usual product norm $\|\cdot\|_{\mathcal{X}}$ as well as its positive cone $\mathcal{X}_+$. Let $A:D(A)\subset \mathcal{X} \rightarrow \mathcal{X}$ be the linear operator defined by $D(A)  =  \mathbb{R}\times \{0_{\mathbb{R}^3}\}\times W^{1,1}(0,\infty, \mathbb{R}^3)$ and 
\begin{equation*}\label{op-A}
A\left(
v,0,0,0,
u_1,u_2,u_3
 \right)^T
=\left(
\begin{array}{l}
-\mu v\\
-u_1(0)\\-u_2(0)\\ -u_3(0)\\
-u'_1-(\gamma_1+d_1)u_1\\-u'_2-(\gamma_2+d_2)u_2\\
-u'_3-d_3u_3
\end{array}
\right).
\end{equation*}
Finally, let us introduce the non-linear map $F:\overline{D(A)}\rightarrow \mathcal{X}$ defined by

$$
F\left(
v,0,0,0,
u_1,u_2,u_3
\right)^T
=\left(\begin{array}{c}
\Lambda- \frac{v}{v+ \sum_{j=1}^3\int_0^\infty u_j(a)\d a } \int_0^\infty \beta(a) \left(u_1(a)+\varepsilon u_2(a) +\delta u_3(a) \right) \d a \\
\frac{v}{v+ \sum_{j=1}^3\int_0^\infty u_j(a)\d a } \int_0^\infty \beta(a) \left(u_1(a)+\varepsilon u_2(a) +\delta u_3(a)\right) \d a\\
\int_0^\infty \gamma_1(a) u_1(a) \d a\\
\int_0^\infty \gamma_2(a) u_2(a) \d a\\
0\\
0\\
0
\end{array}\right).
$$

By identifying $\varphi(t)$ together with $(S(t), 0_{\mathbb{R}^3}, i_1(t,.),i_2(t,.),i_3(t,.))^T$ and by setting $\varphi_0=(S_0, 0_{\mathbb{R}^3}, i_{10}(.),i_{20}(.),i_{30}(.))^T,$ system \eqref{model1} rewrites as the following Cauchy problem 
\begin{equation}\label{abstract}
\left\{\begin{array}{l}
\displaystyle\frac{d \varphi(t)}{dt}= A\varphi(t)+F(\varphi(t)), \vspace{0.2cm} \\
\varphi(0)= \varphi_0.
\end{array}\right.
\end{equation}

By setting $\mathcal{X}_0 = D(A)$ and $\mathcal{X}_{0+} = \mathcal{X}_0 \cap \mathcal{X}_+$ the precise result is the following theorem.
\begin{theorem}\label{thm-semiflow}
Let Assumption \ref{asym1} be satisfied. Then there exists a unique strongly
continuous semiflow $\left\{ \Phi(t,\cdot) : \mathcal{X}_0 \to \mathcal{X}_0\right\}_{t\ge 0}$ such that for each $\varphi_0 \in \mathcal{X}_{0+}$, the map $\varphi \in \mathcal{C} \left((0,\infty), \mathcal{X}_{0+} \right)$ defined by $\varphi=\Phi(\cdot,\varphi_0)$ is a mild solution of \eqref{abstract}, namely, it
satisfies $\int_0^t \varphi(s)\d s \in D(A)$ and $\varphi(t)= \varphi_0 + A \int_0^t \varphi(s)\d s + \int_0^t F\left(\varphi(s)\right)\d s$ for all $t\ge 0$. Moreover, $\left\{ \Phi(t,\cdot) \right\}_t$ satisfies the following properties:
\begin{enumerate}
\item Let $\Phi(t,\varphi_0)=(S(t), 0_{\mathbb{R}^3}, i_1(t,\cdot),i_2(t,\cdot),i_3(t,\cdot))^T; $ then the following Volterra formulation holds true
\begin{equation} \label{eq-volterra}
i_1(t,a)= \begin{cases}
i_{10}(a-t) \frac{D_1(a)}{D_1(a-t)}, \quad \text{ for } t<a,\\
\frac{S(t-a)}{P(t-a)}E_1(t-a) D_1(a), \quad \text{ for } t\ge a
\end{cases}
\end{equation}

\begin{equation*}
\begin{split}
i_j(t,a)= \begin{cases}
i_{j0}(a-t) \frac{D_j(a)}{D_j(a-t)}, \quad \text{ for } t<a,\\
E_j(t-a) D_j(a), \quad \text{ for } t\ge a,
\end{cases} & j=2,3;
\end{split}
\end{equation*}	
wherein 
\begin{equation}\label{eq-E1-E2}
\begin{split}
& E_1(t)= \int_0^\infty \beta(a) \left(i_1(t,a)+\varepsilon i_2(t,a) +\delta i_3(t,a)\right) \d a, \text{ and }\\ 
&E_{j+1}(t)= \int_0^\infty \gamma_j(a) i_j(t,a) \d a, \quad j=1,2.
\end{split}
\end{equation}		
\item For all $\varphi_0 \in \mathcal{X}_{0+}$ one has for all $t\ge 0$,
\begin{equation*}
\| \Phi(t,\varphi_0) \|_{\mathcal{X}} \le \max \left\{ \frac{\Lambda}{\mu}, \frac{\Lambda}{\mu}+ e^{-\mu t} \left( \|\varphi_0 \|_{\mathcal{X}} - \frac{\Lambda}{\mu} \right) \right\} \le \max \left\{ \frac{\Lambda}{\mu},  \|\varphi_0 \|_{\mathcal{X}} \right\}.
\end{equation*}
\item The semiflow $\left\{ \Phi(t,\cdot) \right\}_t$ is bounded dissipative and asymptotically smooth.
\item There exists a nonempty compact set $\mathcal{B} \subset \mathcal{X}_{0+}$ such that 
\subitem(i) $\mathcal{B}$ is invariant under the semiflow $\left\{ \Phi(t,\cdot) \right\}_t$.
\subitem(ii) The subset $\mathcal{B}$ attracts the bounded sets of $\mathcal{X}_{0+}$ under the semiflow $\left\{ \Phi(t,\cdot) \right\}_t$.
\end{enumerate}
\end{theorem}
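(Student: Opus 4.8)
The plan is to cast \eqref{model1} in the framework of integrated semigroups and the Magal--Thieme variation-of-constants formula for non-densely defined operators, which is the natural setting here since $\overline{D(A)}=\mathcal{X}_0=\mathbb{R}\times\{0_{\mathbb{R}^3}\}\times L^1(0,\infty,\mathbb{R}^3)$ is a proper subspace of $\mathcal{X}$. First I would verify that $A$ satisfies the Hille--Yosida estimates: for $\lambda>0$ one solves $(\lambda-A)(v,0_{\mathbb{R}^3},u)=(f,g,h)$ explicitly, getting $v=f/(\lambda+\mu)$ and, for the transport part, $u_j(a)=g_j e^{-\lambda a}D_j(a)+\int_0^a e^{-\lambda(a-s)}\frac{D_j(a)}{D_j(s)}h_j(s)\,\d s$. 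Since $d_j\ge\mu$ gives $D_j(a)/D_j(s)\le e^{-\mu(a-s)}$ for $a\ge s$, this yields $(0,\infty)\subset\rho(A)$ with $\|(\lambda-A)^{-n}\|_{\mathcal{L}(\mathcal{X})}\le(\lambda+\mu)^{-n}\le\lambda^{-n}$, so $A$ generates a non-degenerate integrated semigroup (with constants $M=1$, $\omega=-\mu$).

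Next I would check that $F:\overline{D(A)}\to\mathcal{X}$ is locally Lipschitz continuous; the only delicate term is the fraction $v/(v+\sum_j\int_0^\infty u_j(a)\,\d a)$, which on the positive cone has denominator dominating numerator and is Lipschitz away from $\varphi=0$. To handle the neighbourhood of the origin I would either truncate $F$, solve the resulting problem, and then remove the truncation using the a priori bound of item~2, or observe directly that since $\Lambda>0$ the relevant forward-invariant region stays in $\{S\ge\underline{S}>0\}$. With this, Magal--Thieme's theorem provides, for each $\varphi_0\in\mathcal{X}_{0+}$, a unique maximal mild solution satisfying $\int_0^t\varphi(s)\,\d s\in D(A)$ and $\varphi(t)=\varphi_0+A\int_0^t\varphi(s)\,\d s+\int_0^t F(\varphi(s))\,\d s$; positivity of $\mathcal{X}_{0+}$ under the semiflow follows from the explicit resolvent together with the fact that $F$ maps the relevant part of the boundary of $\mathcal{X}_{0+}$ into $\mathcal{X}_+$, and globality of the solution is then a consequence of the bound in item~2.

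Item~1 is obtained by integrating the three transport equations along the characteristic lines $s\mapsto(t_0+s,a_0+s)$: when $t\ge a$ the characteristic through $(t,a)$ meets the boundary $a=0$ at time $t-a$, where the renewal-type conditions $i_1(t-a,0)=S(t-a)E_1(t-a)/P(t-a)$ and $i_{j+1}(t-a,0)=E_{j+1}(t-a)$ prescribe the data, while when $t<a$ it meets $\{t=0\}$ at age $a-t$ with datum $i_{j0}(a-t)$; multiplying by the survival kernels $D_j$ and using the definitions in \eqref{eq-E1-E2} gives the displayed formulas. For item~2, positivity yields $\|\Phi(t,\varphi_0)\|_{\mathcal{X}}=N(t):=S(t)+\sum_{j=1}^3\int_0^\infty i_j(t,a)\,\d a$; summing the $S$-equation with the transport equations integrated over $a\in(0,\infty)$, inserting the boundary conditions, and using $d_j\ge\mu$ gives $\dot N(t)\le\Lambda-\mu N(t)$, and Gronwall's inequality yields the stated estimate (in particular $\limsup_{t\to\infty}N(t)\le\Lambda/\mu$).

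Finally, item~3 splits into bounded dissipativity, which is immediate from item~2, and asymptotic smoothness, which I would prove by the usual splitting $\Phi(t,\varphi_0)=\Theta(t,\varphi_0)+\Psi(t,\varphi_0)$ along the two branches of the Volterra formula: $\Theta$ collects the $t<a$ contribution of the initial data and decays to zero in $\mathcal{X}$ since $D_j(a)/D_j(a-t)\le e^{-\mu t}$, while $\Psi$ collects the $t\ge a$ contribution; boundedness of $S/P\le1$ and of $E_1,E_{j+1}$ on bounded orbits (item~2), together with the Lipschitz continuity of $\beta,\gamma_j$ in Assumption~\ref{asym1}, makes $t\mapsto S(t)E_1(t)/P(t)$ and $t\mapsto E_{j+1}(t)$ uniformly Lipschitz on bounded orbits, so that by the Fr\'echet--Kolmogorov criterion $\Psi(t,\cdot)$ maps bounded sets of $\mathcal{X}_{0+}$ into relatively compact sets of $\mathcal{X}$. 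Item~4 then follows from the classical theorem that a bounded dissipative, asymptotically smooth semiflow admits a global compact attractor $\mathcal{B}\subset\mathcal{X}_{0+}$ which is invariant and attracts bounded sets. The step I expect to be the main obstacle is the asymptotic smoothness estimate, i.e. making rigorous the uniform $L^1$-equicontinuity of the age-profiles $i_j(t,\cdot)$ along bounded orbits, together with the care required by the non-Lipschitz behaviour of $F$ near $\varphi=0$.
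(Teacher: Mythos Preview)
Your proposal is correct and follows the same overall architecture as the paper: verify that $A$ is Hille--Yosida, apply the Magal--Thieme theory for non-densely defined Cauchy problems to get well-posedness, read off the Volterra formulas from the characteristics, derive the a~priori bound from $\dot N\le\Lambda-\mu N$, and conclude item~4 from a standard attractor theorem (the paper cites Theorem~2.33 of Smith--Thieme). The one place where you and the paper diverge is the asymptotic smoothness argument in item~3. You use the Webb-style splitting $\Phi=\Theta+\Psi$, with $\Theta$ the $\{t<a\}$ contribution decaying like $e^{-\mu t}$ and $\Psi$ the $\{t\ge a\}$ contribution shown to have relatively compact range via Fr\'echet--Kolmogorov. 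The paper instead argues sequential asymptotic compactness directly: it isolates as a separate claim that the boundary fluxes $E_j$ are Lipschitz in $t$ along bounded orbits, applies Arzel\`a--Ascoli to extract locally uniform limits of $S^n(\cdot+t_n)$, $P^n(\cdot+t_n)$ and $E_j^n(\cdot+t_n)$ for any sequence $t_n\to\infty$, and then passes to the limit in the Volterra formula to obtain $L^1$-convergence of $i_j^n(t_n,\cdot)$. Both routes rest on the same crucial estimate---uniform Lipschitz continuity of the $E_j$'s---so the difference is one of packaging rather than substance; your splitting is a bit more explicit, the paper's sequential argument a bit shorter. Your remark about the non-Lipschitz behaviour of $F$ near $\varphi=0$ and the fix via a lower bound on $S$ (from $\Lambda>0$) is a point the paper passes over in silence.
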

We refer to Section \ref{sec-proof-semiflow} for the proof of Theorem \ref{thm-semiflow}.

\subsection{Basic reproduction number and asymptotic behaviour}
An equilibrium $(S,i_1(a),i_2(a),i_3(a))$ of system \eqref{model1} is such that $i_j(a)=D_j(a) i_j(0)$, with $j=1,2,3$; and 
\[
\begin{cases}
i_1(0)=\frac{S}{P} \left(i_1(0)  \Omega_1 +\varepsilon i_2(0)  \Omega_2 + \delta i_3(0)  \Omega_3\right),\\
i_2(0)= \Gamma_1 i_1(0),\\
i_3(0)= \Gamma_2 i_2(0),\\
i_1(0)=\Lambda-\mu S,\\
P=S+ \sum_{j=1}^3 \overline{D}_ji_j(0) .
\end{cases}
\]
The disease-free equilibrium corresponds to $i_1(0)=i_2(0)=i_3(0)=0$ and is given by $E^0= \left(\frac{\Lambda}{\mu},0,0,0\right)$.

In order to find any endemic equilibria, we first determine the basic reproduction number $R_0$ using the next generation operator approach \cite{Diekmann1990,Inaba2012}. We calculate (see Section \ref{basicreproduction})
\begin{equation*}
R_0=  \Omega_1 +\varepsilon \Omega_2\Gamma_1+\delta\Omega_3\Gamma_1\Gamma_2.
\end{equation*}

Now by taking $i_j(0)$'s positive, by straightforward algebra, the disease-endemic equilibrium $E^*=\left(S^*,i_1^*(\cdot),i_2^*(\cdot), i_3^*(\cdot)\right)$ is such that 
\[
\begin{split}
&S^*= \frac{I^*_0 \overline{D} }{R_0-1};\:\: i_1^*(\cdot)= I^*_0   D_1(\cdot);\:\: i_2^*(\cdot)=  I^*_0  \Gamma_1 D_2(\cdot) ;\:\: i_3^*(\cdot)=  I^*_0  \Gamma_1 \Gamma_2 D_3(\cdot),
\end{split}
\] 
with $\overline{D}= \overline{D}_1+ \Gamma_1 \overline{D}_2+ \Gamma_1\Gamma_2\overline{D}_3$ and $I^*_0= \frac{\Lambda (R_0-1) }{\mu \overline{D}+R_0-1}$.
From where, the following proposition summarizes the equilibria of the model.
\begin{proposition}
Let Assumption \ref{asym1} be satisfied. Then the semiflow $\left\{ \Phi(t,\cdot) \right\}_t$  provided by Theorem \ref{thm-semiflow} has exactly:
\begin{itemize}
\item [(i)] One equilibrium, the disease-free equilibrium $E^0$, if $R_0\le 1$.
\item [(i)] Two equilibria, the disease-free equilibrium $E^0$ and disease-endemic equilibrium $E^*$, if $R_0> 1$.
\end{itemize}
\end{proposition}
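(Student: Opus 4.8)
The plan is to solve directly the equilibrium system displayed just before the proposition. First I would record two structural facts. Since $d_j\ge\mu>0$ by Assumption~\ref{asym1}, we have $D_j(a)\le e^{-\mu a}$, hence $\overline{D}_j=\int_0^\infty D_j(a)\,\d a\le 1/\mu<\infty$; consequently any candidate $i_j(\cdot)=D_j(\cdot)\,i_j(0)$ automatically lies in $L^1(0,\infty)$, and it suffices to determine the four scalars $S$ and $i_1(0),i_2(0),i_3(0)$. Moreover $\Omega_j>0$ and $\Gamma_j>0$ for all $j$: by Assumption~\ref{asym1}(3) the functions $\beta$ and $\gamma_j$ are positive on sets of positive measure and $D_j$ is everywhere positive, so each of the defining integrals is strictly positive. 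In particular $R_0=\Omega_1+\varepsilon\Omega_2\Gamma_1+\delta\Omega_3\Gamma_1\Gamma_2>0$.

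Next I would split into the cases $i_1(0)=0$ and $i_1(0)>0$. If $i_1(0)=0$, the relations $i_2(0)=\Gamma_1 i_1(0)$ and $i_3(0)=\Gamma_2 i_2(0)$ force $i_2(0)=i_3(0)=0$, and then $i_1(0)=\Lambda-\mu S$ gives $S=\Lambda/\mu$; this is exactly $E^0$, which therefore always exists and is the unique equilibrium with $i_1(0)=0$. If $i_1(0)>0$, then $i_2(0)=\Gamma_1 i_1(0)>0$ and $i_3(0)=\Gamma_1\Gamma_2 i_1(0)>0$; also $S>0$, since $S=0$ in the first equilibrium equation would give $i_1(0)=0$. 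Hence $P=S+\overline{D}_1i_1(0)+\Gamma_1\overline{D}_2i_1(0)+\Gamma_1\Gamma_2\overline{D}_3i_1(0)=S+\overline{D}\,i_1(0)>S$, where $\overline{D}=\overline{D}_1+\Gamma_1\overline{D}_2+\Gamma_1\Gamma_2\overline{D}_3>0$. Substituting $i_2(0),i_3(0)$ into the first equilibrium equation and cancelling the factor $i_1(0)>0$ yields $1=\frac{S}{P}R_0$, i.e. $R_0=P/S>1$ because $0<S<P$. This proves part~(i): when $R_0\le1$ no equilibrium with $i_1(0)>0$ can exist, so $E^0$ is the only one.

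To finish part~(ii), assume $R_0>1$ and show the endemic branch is nonempty and reduces to a single point. From $\frac{S}{P}=\frac1{R_0}$ and $P=S+\overline{D}\,i_1(0)$ we get $(R_0-1)S=\overline{D}\,i_1(0)$, so $S=\frac{\overline{D}\,i_1(0)}{R_0-1}$; inserting this into $i_1(0)=\Lambda-\mu S$ gives the single linear equation $i_1(0)\bigl(R_0-1+\mu\overline{D}\bigr)=\Lambda(R_0-1)$, whose unique solution is $i_1(0)=I_0^*:=\frac{\Lambda(R_0-1)}{\mu\overline{D}+R_0-1}>0$, whence $S^*=\frac{\overline{D}\,I_0^*}{R_0-1}>0$ and $i_j^*(\cdot)$ are the expressions stated before the proposition. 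By construction this tuple satisfies every equilibrium relation, it is positive, and each $i_j^*(\cdot)\in L^1$, so $E^*\in\mathcal{X}_{0+}$. Since any equilibrium with $i_1(0)>0$ is forced to take precisely these values, $E^*$ is unique; together with $E^0$ this gives exactly two equilibria.

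The argument is entirely elementary linear algebra once the Volterra/equilibrium reduction is in hand, so I do not expect a genuine obstacle. The only points that need a little care are the strict inequality $P>S$ at an infected equilibrium — which rests on $S>0$, itself forced by the boundary condition for $i_1$ — and the strict positivity of the $\Omega_j$ and $\Gamma_j$ coming from Assumption~\ref{asym1}(3); without the latter one could neither cancel $i_1(0)$ to obtain $1=\frac{S}{P}R_0$ nor guarantee $I_0^*>0$ and $\overline{D}>0$.
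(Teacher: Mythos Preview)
Your argument is correct and is exactly the ``straightforward algebra'' the paper alludes to in the paragraph preceding the proposition: reduce to the scalars $S,i_1(0),i_2(0),i_3(0)$, observe that $i_1(0)=0$ forces $E^0$, and when $i_1(0)>0$ cancel to get $S/P=1/R_0$ and solve the resulting linear relation for $I_0^*$. The paper does not give a separate formal proof beyond that computation, so your write-up simply fills in the details it leaves implicit.
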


We end this section by providing the following result concerning the asymptotic behaviour of model \eqref{model1} with respect to the $R_0$.
\begin{theorem} \label{Theo-asymptotic}
Let Assumption \ref{asym1} be satisfied. 
\begin{enumerate}
\item The disease-free equilibrium $E^0$ is globally asymptotically stable if $R_0 < 1$ and unstable if $R_0 > 1$.
\item When $R_0>1$, then
\subitem (i) The disease-endemic equilibrium $E^*$ is locally asymptotically stable for $\{\Phi(t,\cdot) \}_{t}$.
\subitem (ii) If the initial conditions $y_0$ satisfy $i_{10}=i_{20}=i_{30}=0$, then the semiflow $\{\Phi(t,y_0) \}_{t}$ tends to the disease-free equilibrium $E^0$ for the topology of $\mathcal{X}$.
\subitem (iii) If the initial conditions $y_0$ satisfy $i_{10}+i_{20} +i_{30}>0$, then the semiflow $\{\Phi(t,y_0) \}_{t}$ is uniformly persistent in the sense that there exists $\nu>0$ such that 
\begin{equation*}
\liminf_ {t \to \infty} \int_0^\infty \beta(a) \left(i_1(t,a) +\varepsilon i_2(t,a)+\delta i_3(t,a) \right) \d a \ge \nu.
\end{equation*}
\end{enumerate}
\end{theorem}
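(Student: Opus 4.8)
The plan is to treat the three assertions by the standard machinery for age-structured epidemic systems, building on the Volterra formulation \eqref{eq-volterra} and the compact attractor $\mathcal{B}$ furnished by Theorem \ref{thm-semiflow}. For part 1, I would first linearise system \eqref{model1} about $E^0$ (equivalently, note that near $E^0$ one has $S/P \to 1$) and reduce the linearised boundary-value problem to a characteristic equation in a complex parameter $\lambda$; the renewal-type structure makes this equation read, roughly, $1 = \Omega_1(\lambda) + \varepsilon\Gamma_1(\lambda)\Omega_2(\lambda) + \delta\Gamma_1(\lambda)\Gamma_2(\lambda)\Omega_3(\lambda)$, where $\Omega_j(\lambda)$ and $\Gamma_j(\lambda)$ are Laplace-type transforms of $\beta D_j$ and $\gamma_j D_j$. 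Since the right-hand side is strictly decreasing in real $\lambda$ and equals $R_0$ at $\lambda=0$, one gets instability when $R_0>1$ (a positive real root) and local stability when $R_0<1$; for \emph{global} stability I would construct a Lyapunov functional of the form $V(t) = S - S^0 - S^0\ln(S/S^0) + \sum_{j}\int_0^\infty \alpha_j(a)\, i_j(t,a)\,\d a$, choosing the weights $\alpha_j(a)$ (built from the integrals $\int_a^\infty \beta D_j$, etc.) so that $\dot V \le 0$ along trajectories, then invoke LaSalle's invariance principle together with the precompactness of orbits to conclude convergence to $E^0$.

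For part 2(i), local asymptotic stability of $E^*$, I would again linearise — now about the endemic equilibrium, where $S/P$ contributes nontrivial extra terms from $\partial_S(S/P)$ and $\partial_{i_j}(S/P)$ — and analyse the resulting characteristic equation, showing all roots have negative real part (a perturbation/Rouché argument, or a Lyapunov functional analogous to the one above but centred at $E^*$, using $x - x^* - x^*\ln(x/x^*)$ type terms in each component). Part 2(ii) is the easiest: if $i_{10}=i_{20}=i_{30}=0$ then by \eqref{eq-volterra} (and uniqueness) $i_1(t,\cdot)\equiv i_2(t,\cdot)\equiv i_3(t,\cdot)\equiv 0$ for all $t$, so the $S$-equation reduces to $\dot S = \Lambda - \mu S$, which drives $S(t)\to\Lambda/\mu$, i.e. $\Phi(t,y_0)\to E^0$.

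Part 2(iii), uniform persistence, is the step I expect to be the main obstacle, and I would prove it via the Hale–Waltman / Magal–Thieme persistence theory. Set $M = \{\varphi_0\in\mathcal{X}_{0+} : i_{10}=i_{20}=i_{30}=0\}$; by 2(ii) the semiflow restricted to the forward-invariant set $M$ has global attractor $\{E^0\}$. The key points to verify are: (a) the set $\partial M_0$ (points whose orbits stay in $M$) is exactly $M$, which follows from the Volterra formula and Assumption \ref{asym1}(3) — positivity of $\beta,\gamma_j$ on suitable neighbourhoods forces $i_1(t,0)>0$ eventually once any $i_j$ is nonzero; (b) $E^0$ is isolated and acyclic in $M$, and $W^s(E^0)\cap \mathcal{X}_{0+}\setminus M = \emptyset$, which reduces to showing that no trajectory starting with $i_{10}+i_{20}+i_{30}>0$ can converge to $E^0$ — here one argues by contradiction: if $\Phi(t,y_0)\to E^0$ then $S(t)/P(t)\to 1$, so for large $t$ the infected subsystem dominates a linear renewal system with reproduction number $R_0>1$, whose solutions grow, contradicting convergence. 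Granting uniform persistence with respect to the functional $\rho(\varphi)=\int_0^\infty\beta(a)(i_1+\varepsilon i_2+\delta i_3)\,\d a$ — after checking $\rho$ is not identically zero along nontrivial orbits, again using Assumption \ref{asym1}(3) — one obtains the claimed $\nu>0$. The delicate part throughout is handling the nonlocal, non-densely-defined nature of the problem and justifying that the linearised renewal comparison is legitimate near $E^0$; the compactness from Theorem \ref{thm-semiflow}(4) is what makes the abstract persistence theorems applicable.
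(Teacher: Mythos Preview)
Your outline is broadly correct, but it diverges from the paper's proof in two places, and one of your concrete choices would not work as written.

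For global stability of $E^0$ when $R_0<1$, the paper does \emph{not} build a Lyapunov functional. Instead it argues directly on the compact global attractor $\mathcal{B}$ from Theorem~\ref{thm-semiflow}(4): any complete orbit in $\mathcal{B}$ satisfies the Volterra relations for all $t\in\R$, and since $S/P\le1$ one obtains $\overline{E}_1:=\sup_{t\in\R}E_1(t)\le R_0\,\overline{E}_1$, forcing $\overline{E}_1=0$ and hence $\mathcal{B}=\{E^0\}$. This is shorter and avoids a genuine obstacle in your proposal: the term $S-S^0-S^0\ln(S/S^0)$ is designed for mass-action incidence, whereas here the force of infection carries the factor $S/P$, so the usual cancellation between $(1-S^0/S)\dot S$ and the boundary term $i_1(t,0)$ fails. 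A purely linear functional $\sum_j\int\alpha_j(a)i_j(t,a)\,\d a$, exploiting $S/P\le1$, would repair this and is essentially what the attractor argument encodes.

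For uniform persistence the paper also takes a different route. Rather than the acyclicity / stable-manifold verification you describe, it first proves \emph{weak} persistence by a direct renewal argument: from a lower bound on $S$ it derives $E_1(t)\ge K\int_0^t u(\tau)E_1(t-\tau)\,\d\tau$, takes Laplace transforms to obtain $K\widehat u(\lambda)\le1$, and lets $\lambda\to0$ to get $E_1^\infty/P_\infty\ge\mu(R_0-1)$. The upgrade to strong persistence then invokes Smith--Thieme (Theorem~5.2), using the attractor, the Lipschitz continuity of $E_1$, and the dichotomy of Lemma~\ref{lem-E1-positif}. Your Hale--Waltman approach is a legitimate alternative and ultimately appeals to the same abstract result; the paper's Laplace-transform computation has the advantage of yielding an explicit lower bound on $\limsup E_1$. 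For 2(i) the paper's characteristic-equation analysis is a direct modulus comparison (if $\mathrm{Re}\,\lambda>0$ then one side of the equation has modulus strictly below $R_0$ and the other strictly above), not Rouch\'e or Lyapunov; your 2(ii) matches the paper exactly.
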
 
We refer to Section \ref{sec-proof-thm24} for the proof of Theorem \ref{Theo-asymptotic}.

\section{Model parameters, Typical model simulation and Sensitivity analysis} \label{sec-parameters-simu-SA}

\subsection{Setting model parameters}\label{sec-para1}
In this section we briefly describe the shape and the values of parameters  consider for the simulations of model \eqref{model1}. For all simulations, parameters $\Lambda$ and $\mu$ are assumed to be fixed with constant values given in Table \ref{tab-model-parameters}. We also assume that the disease induced mortality for HIV stage $j$, $d_j(a)$, is constant with respect to the duration within the stage ({\it i.e.} $d_j(a)\equiv d_j$) and the fixed reference value is given in Table \ref{tab-model-parameters}. Therefore, we more specifically describe the duration-dependent parameters $\beta(a)$ and $\gamma_j(a)$.  
\paragraph{Transmission rate $\beta$.} The parameter $\beta$ is defined for each stage of infection as in \cite{Hollingsworth2008,Eaton2014}. For simplicity, we assume that individuals with AIDS do not substantially contribute in further spread of HIV ({\it i.e.}, not risky sexual behavior) such that we can set $\delta=0$. The transmission rate is assumed to be constant for each stage of infection. During stage $j$, the transmission rate is $\rho_0\times \beta_j$ with $j=1,2$. Here, $\rho_0>0$ is the rate at which an infectious individual infects the susceptible individuals, and $\beta_j$'s are transmission hazard. As pointed in \cite{Hollingsworth2008,Eaton2014}, HIV is 26 times more infectious during stage 1 than during stage 2. For simulations, the reference values and range of parameters $\beta_1$ and $\beta_2$ are given in Table \ref{tab-model-parameters}. Further, we set the transmission rate $\rho_0\simeq 2.48$, such that the maximum value of $R_0$ for the set of variation of parameters is 7.
\paragraph{Rate of progression from HIV stage $j$ to stage $j+1$, $\gamma_j(a)$.} The parameter $\gamma_j$ is set to 
\[
\gamma_j(a) = 0 \text{ if } a< T_0^j \quad \text{and} \quad \bar \gamma_j \text{ if } a \ge T_0^j;
\]
wherein $T_0^j$ is the duration of the $j$-stage of HIV infection and $\bar \gamma_j=1$ is the constant progression rate.  Infected individuals in stage $j$ remain in that stage for a period of time $T_0^j$ and then progress to stage $j+1$ at a constant rate $\bar \gamma_j$ after the duration of the stage $T_0^j$. For simulations, the reference values and the range of parameters $\bar \gamma_j$ and $T_0^j$ are given in Table \ref{tab-model-parameters}.




\subsection{Typical epidemic dynamics simulated with the model}\label{sec-initial1}
For all simulations consider in this note, we assume that the initial susceptible population is $S(0)=\Lambda/\mu$ and the initial distribution of infectives are $i_{10}(a)= c_{01}l_{10}(a)$, $i_{20}(a)= c_{02}l_{20}(a)$ and $i_{30}(a)=0$ for all $a\ge 0$ (in months) with
\[
l_{10}(a)= \left\{
\begin{split}
& e^{-\mu a}, a\le 2.9,\\
&  e^{-\mu \left(2a-2.9\right)}, a > 2.9
\end{split}
\right. \quad 
l_{20}(a)=  \left\{
\begin{split}
& e^{-\mu a}, a\le 120,\\
&  e^{-\mu \left(2a-120\right)}, a > 120.
\end{split}
\right.
\]
By setting $N_0=S(0)+\|i_{10} \|_{L^1}+ \|i_{20} \|_{L^1}$, the constants $c_{01}$ and $c_{02}$ are scaling coefficients given by $c_{01}= \frac{0.025N_0}{100\|l_{10} \|_{L^1}}  $ and $c_{02}= \frac{0.025N_0}{100\|l_{20} \|_{L^1}} $ such that the HIV epidemic is initialized with a disease prevalence of $ \frac{\|i_{10} \|_{L^1}+ \|i_{20} \|_{L^1}}{N_0} \times 100=0.05\%$.

Numerical simulations of Figure \ref{Figure_model_simu} are based on the reference values of the model parameters defined previously and summarized in Table \ref{tab-model-parameters}. For those parameter values, $R_0=2.55$ and the dynamics of susceptible, $S(t)$, and the total number of HIV stage $j$, $I_j(t)=\int_0^\infty i_j(t,a) \d a$ with $j=1,2,3$, at time $t$ are given in Figure \ref{Figure_model_simu}. Furthermore, the total number of HIV late stage $I_3$ is always be zero in the first 120 months (Figure \ref{Figure_model_simu}, right-bottom). This behaviour is explained by the fact that, in Figure \ref{Figure_model_simu}, the duration of HIV stage 2 is set 120 months and the initial population in HIV late stage is set to zero ($i_{30}(a)\equiv 0$).
\begin{figure}[!htp]
	\center{
		\begin{tabular}{cc}
			\includegraphics[width=2.5in] {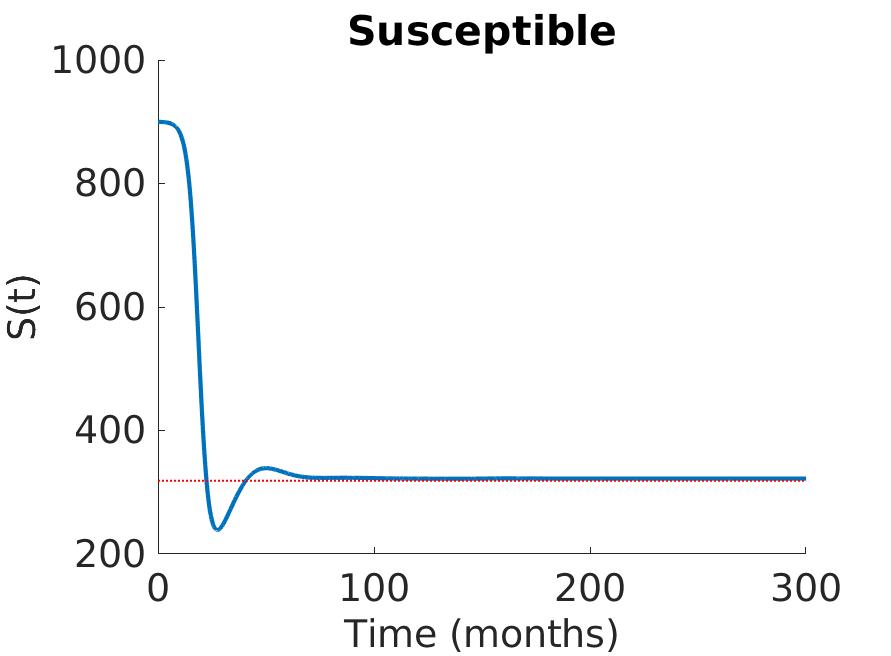} & \includegraphics[width=2.5in] {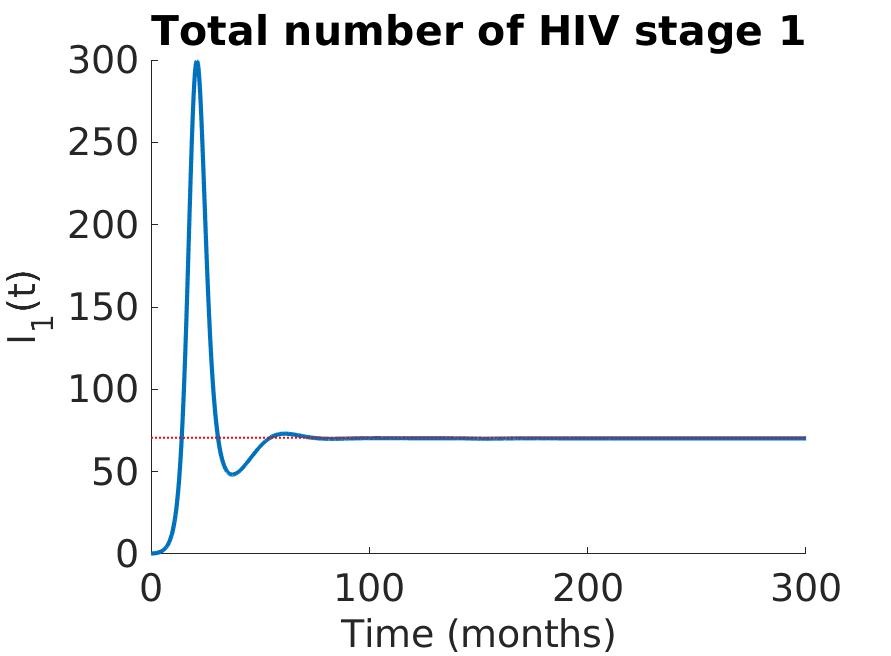} \\
			& \\
			\includegraphics[width=2.5in] {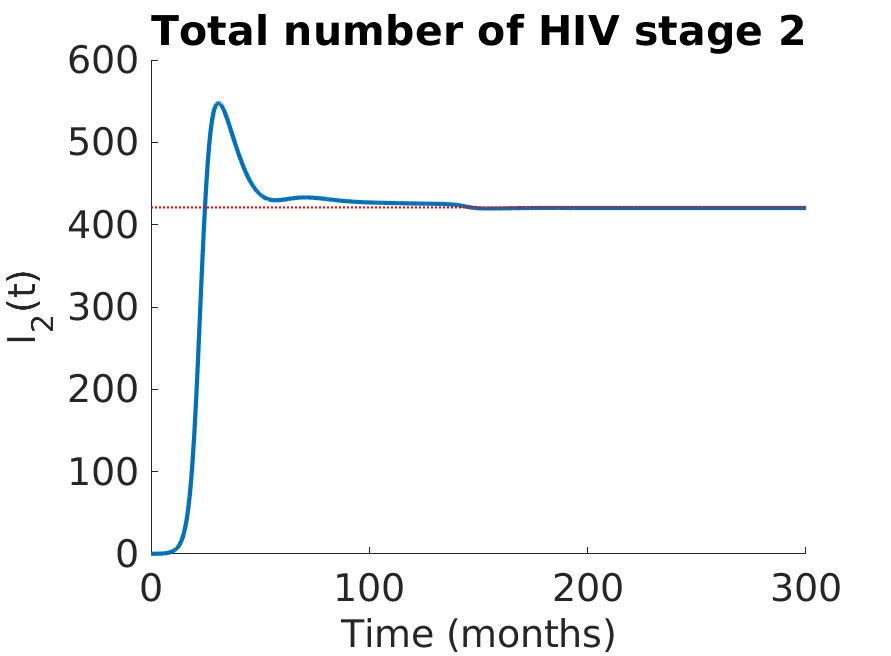} & \includegraphics[width=2.5in] {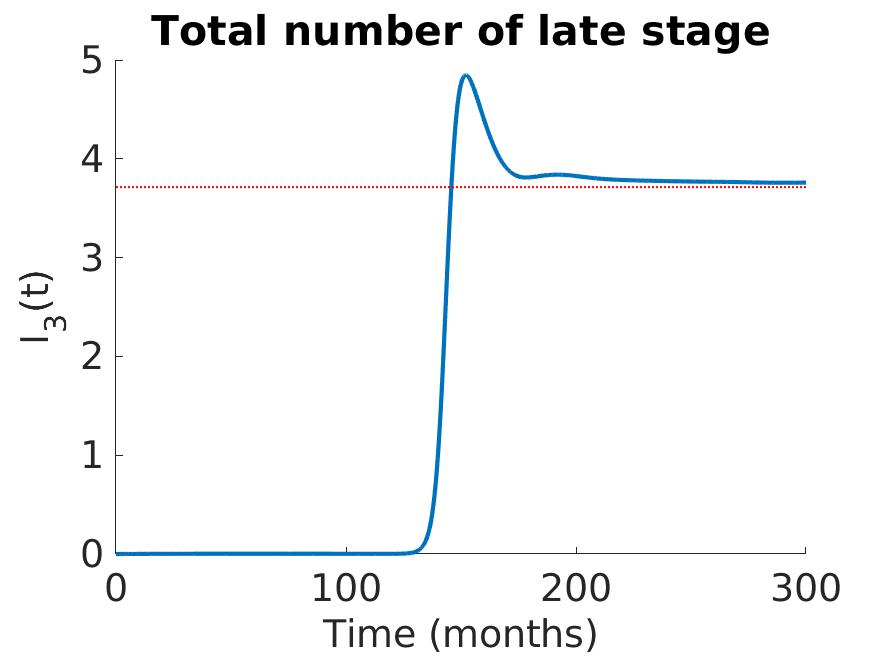}
		\end{tabular}
	}
	\caption{Typical epidemics simulated by the model.  Parameters of the model are set to their reference values given in Table \ref{tab-model-parameters} leading to $R_0=2.55$. The figure illustrates the dynamics of susceptible, $S(t)$, and the total number of HIV stage $j$, $I_j(t)=\int_0^\infty i_j(t,a) \d a$, at time $t$. For each figure, the dot line represents the endemic equilibrium of the model.} \label{Figure_model_simu}
\end{figure}

\subsection{Global sensitivity analysis}
Global sensitivity analyses \cite{Satelli2008} quantify the relative importance of model parameters by partitioning the variance of output variables into those resulting from the main effects of parameters and their higher-order interactions. Here we study the sensitivity of the HIV late stage $I_3^{tot}= \int_0^{T} \int_0^\infty i_3(t,a) \d a\d t$ to the four parameters $\gamma_1(\cdot)$, $\gamma_2(\cdot)$, $\beta_1$ and $\beta_2$. The range of variation accounting for the known biological variability of above parameters is assigned in Table\ref{tab-model-parameters}. Actually, the variability of $\gamma_j(\cdot)$ is determined by the one of $T_0^j$ (see Section \ref{sec-para1}). Sensitivity indices is estimated by fitting an ANOVA (Analysis of variance) linear model, including third-order interactions, to the data generated by simulation. Note that this ANOVA linear model fitted very well with 99\% of variance explained. The model is implement with MatLab software and the ANOVA analysis with R software (http://www.r-project.org/). Sensitivity analyses indicate that the HIV progression rate from stage 2 to 3 ($\gamma_2$) is the most influential factor of the HIV late stage $I_3^{tot}$ (79\% of the variance,
Figure \ref{Figure_sensi_analysis}). The next factor, the transmission rate of stage 1 ($\beta_1$) accounting for 8\% of the variance, is followed by the HIV progression rate from stage 1 to 2 ($\gamma_1$) and the transmission rate of stage 2 ($\beta_2$) ($<$5\% of variance explained for each).
\begin{SCfigure}
	\centering
	\caption{Sensitivity indices of the HIV late stage $I_3^{tot}= \int_0^{T} \int_0^\infty i_3(t,a) \d a\d t$. The black parts of bars correspond to the main indices (effect of the factor alone) and full bars correspond to total indices (white parts corresponds to the effect of the factor in interaction with all other factors).} \label{Figure_sensi_analysis}
	\includegraphics[width=.65\textwidth]%
	{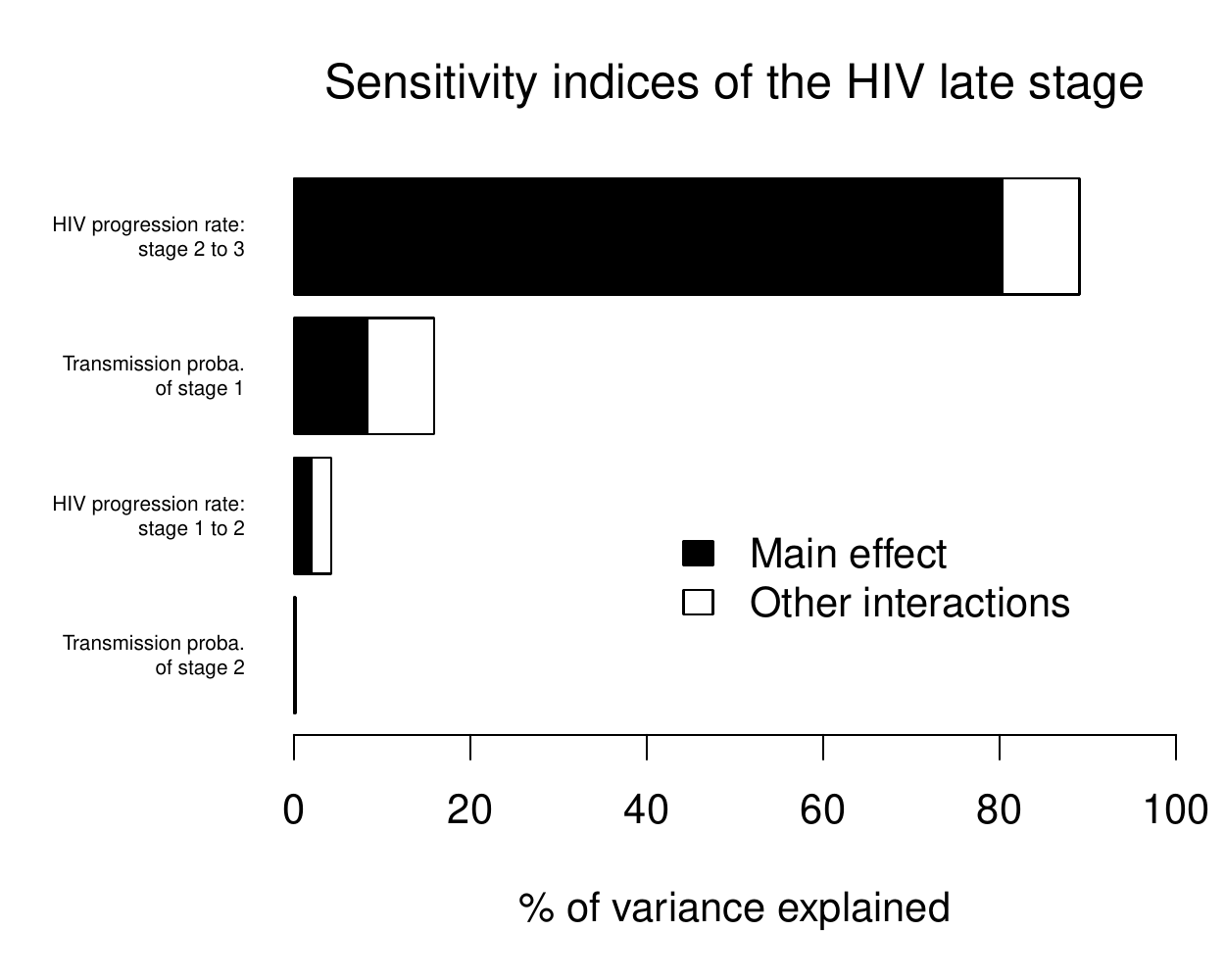}
\end{SCfigure}

\section{Optimal intervention strategies} \label{sec-optimal-model}
Today, more tools are available to prevent HIV such as using condoms the right way every time you have sex or by taking newer HIV prevention medicines such as pre-exposure prophylaxis and post-exposure prophylaxis \cite{center}. But, when living with HIV, up to date, the most important intervention is taking medicines to treat HIV (called ART). Although a cure for HIV does not yet exist, ART can keep healthy for many years if taken consistently and correctly and greatly reduce the chance of transmitting to a partners. Those options are supported by the sensitivity analysis: (i) prevention methods, by the HIV transmission rates and (ii) treatment of HIV, by the HIV progression rates (Figure \ref{Figure_sensi_analysis}). However, only ART is consider as an intervention strategy in this note.

\subsection{Extended model with intervention methods} \label{sec-optimal}
In addition to the previous state variables, $S(t)$-susceptible, $i_1(t,a)$-HIV stage 1 (who are not under ART) and $i_2(t,a)$-HIV stage 2 (who are not under ART), ART interventions induced four additional state variables: $i_{1,TF}(t,a)$-HIV stage 1 and $i_{2,TF}(t,a)$-HIV stage 2 with ART failure or drop out, as well as $i_{1,TS}(t,a)$-HIV stage 1 and $i_{2,TS}(t,a)$-HIV stage 2 with ART success. We also introduce the probability of treatment drop out: $p_1$; at HIV stage 1, $p_2$, at HIV stage 2 (with no ART at HIV stage 1) and $p_{2,TF}$, at HIV stage 2 (with ART drop out at HIV stage 1). This differential infectivity is supported by the fact that individuals who have dropped out of treatment progress through subsequent HIV stages twice as fast as treatment-naïve individuals \cite{Eaton2014}. Therefore, people in class $i_{1,TF}$ (resp. $i_{2,TF}$) progress at rate $\gamma_{1,TF}(a)$  (resp. $\gamma_{2,TF}(a)$) to the HIV  stage 2 (resp. late stage). Then, the force of infection and overall progression into subsequent stages write $
E_1(t)= \int_0^\infty \beta(a) \left[i_1+i_{1,TF} +\varepsilon( i_2+i_{2,TF}) +\delta i_3\right] (t,a) \d a$, $
E_2(t)= \int_0^\infty \gamma_1(a) i_1(t,a) \d a,$ $
E_{2,TF}(t)= \int_0^\infty \gamma_{1,TF}(a) i_{1,TF}(t,a) \d a,$ and  $E_3(t)= \int_0^\infty \left(\gamma_2(a) i_2(t,a) +\gamma_{2,TF}(a) i_{2,TF}(t,a)\right) \d a.$ The total population is set to $P(t)=S(t)+ \sum_{j=1}^3 \int_{0}^\infty i_j(t,a)\d a+ \sum_{j=1}^2 \int_{0}^\infty i_{j,TF}(t,a)\d a+ \sum_{j=1}^2 \int_{0}^\infty i_{j,TS}(t,a)\d a $. The transfer diagram of the model becomes:
\begin{figure}[H]
\includegraphics[width=1\textwidth]%
	{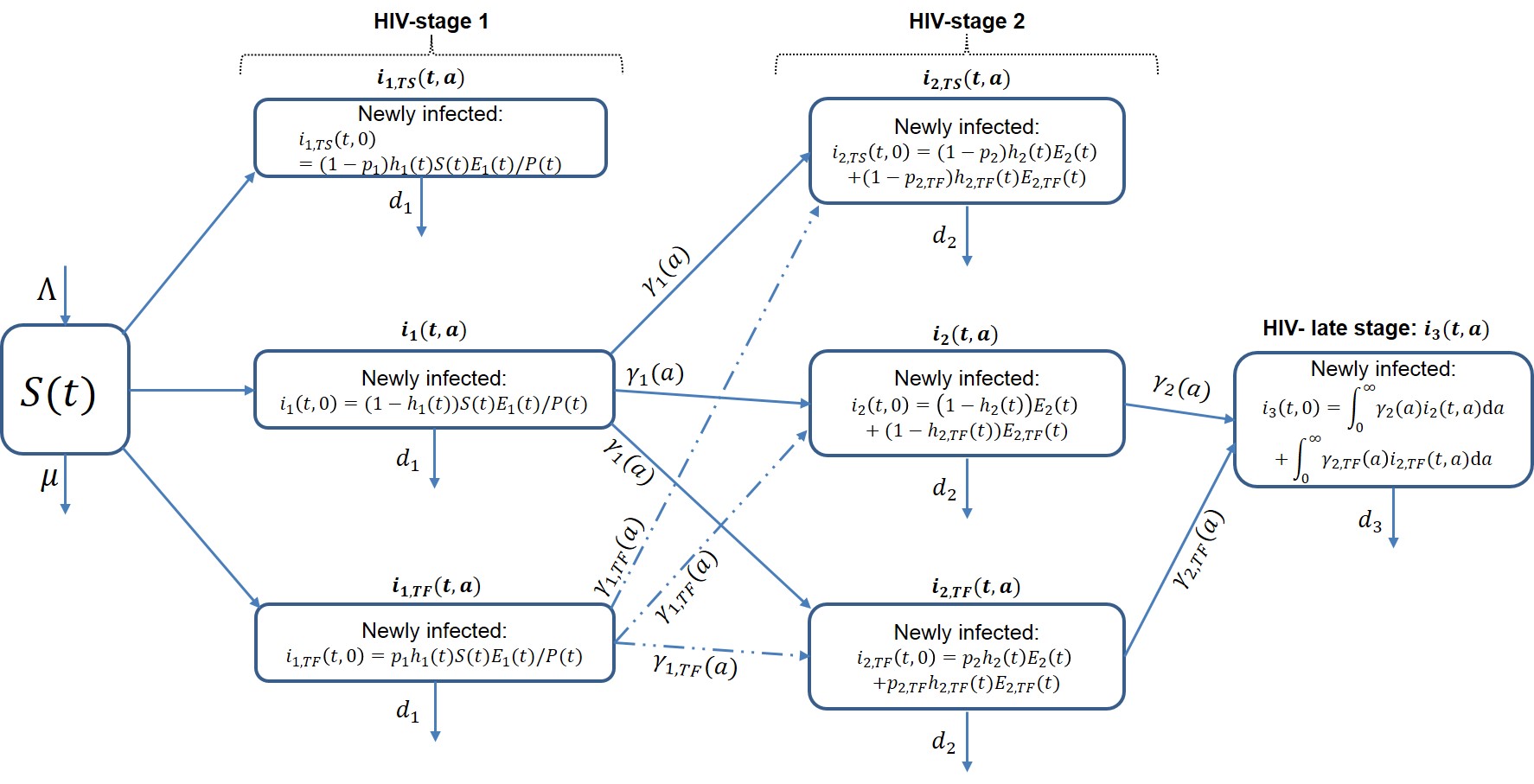}
\end{figure}\vspace{-.8cm}
Three interventions strategies, called controls, are include into our initial model \eqref{model1}. Controls are represented as functions of time and assigned reasonable upper and lower bounds. At a time $t$, we define the proportion of individuals: (i) $h_1(t)$;  under ART at HIV stage 1, (ii) $h_2(t)$, under ART at HIV stage 2 (which were not under ART at HIV stage 1) and (iii) $h_{2,TF}(t)$, under ART at HIV stage 2 (with ART drop out at HIV stage 1). Then, the system describing our model with controls writes
\begin{equation}
	\begin{cases}
	\begin{split}
	 \dot S(t)=&\Lambda-\mu S(t)-S(t) E_1(t)/P(t), \\
	\left(\partial_t+\partial_a\right) i_1(t,a)=& - \left( \gamma_1(a) + d_1(a) \right) i_1(t,a),\\
	\left(\partial_t+\partial_a\right) i_{1,TF}(t,a)=& - \left( \gamma_{1,TF}(a) + d_1(a) \right) i_{1,TF}(t,a),\\
	\left(\partial_t+\partial_a\right) i_{1,TS}(t,a)=& - d_1(a)  i_{1,TS}(t,a),\\
	\left(\partial_t+\partial_a\right) i_2(t,a)=& - \left( \gamma_{2}(a) + d_2(a) \right) i_2(t,a),\\
	\left(\partial_t+\partial_a\right) i_{2,TF}(t,a)=& - \left( \gamma_{2,TF}(a) +d_2(a)\right) i_{2,TF}(t,a),\\
	\left(\partial_t+\partial_a\right) i_{2,TS}(t,a)=& -  d_2(a) i_{2,TS}(t,a),\\
	\left(\partial_t+\partial_a\right) i_3(t,a)=& -  d_3(a) i_3(t,a),
	\end{split}
	\end{cases}\label{model2}
	\end{equation}
with the boundary conditions
\begin{equation}
\begin{cases}
\begin{split}
i_1(t,0)=& \left(1- h_1(t)\right) S(t) E_1(t)/P(t),\\
i_{1,TF}(t,0)=& p_1 h_1(t) S(t) E_1(t)/P(t),\\
i_{1,TS}(t,0)=& (1-p_1) h_1(t) S(t) E_1(t)/P(t),\\
i_2(t,0)=& \left(1- h_2(t) \right) E_2(t) + \left(1- h_{2,TF}(t) \right) E_{2,TF}(t), \\ i_{2,TF}(t,0)=& p_2 h_2(t)  E_{2}(t)+ p_{2,TF} h_{2,TF}(t) E_{2,TF}(t),\\
i_{2,TS}(t,0)=& (1-p_2) h_2(t)  E_{2}(t) + (1- p_{2,TF}) h_{2,TF}(t) E_{2,TF}(t),\\
i_3(t,0)=& E_{3}(t).
\end{split}
\end{cases}\label{model2-cond}
\end{equation}
We set $S^h(t)$, and  $y^h(t,a)=$\\ $ \left( i_1^h(t,a), i_{1,TF}^h(t,a), i_2^h(t,a), i_{2,TF}^h(t,a), i_3^h(t,a),i_{1,TS}^h(t,a),i_{2,TS}^h(t,a) \right)^T$ the solution of \eqref{model2}-\eqref{model2-cond} associated to the control scheme $h= \left(h_1,h_2,h_{2,TF}\right)^T$. To illustrate the dependency with respect to $h$, we also set $P^h(t)=P(t)$, $E_1^h(t)= E_1(t)$, $E_2^h(t)= E_2(t)$, $E_{2,TF}^h(t)= E_{2,TF}(t)$, and $E_3^h(t)= E_3(t)$. Then, problem \eqref{model2}-\eqref{model2-cond} rewrites 
\begin{equation}\label{model2-compact}
\begin{cases}
\dot S^h(t)= g_S \left(S^h(t),E_1^h(t)/P^h(t)\right),\\
\left(\partial_t+\partial_a\right)y^h(t,a)= f(a) y^h(t,a),\\
y^h(t,0)= \phi \left(h(t),E_1^h(t)/P^h(t),E_2^h(t), E_{2,TF}^h(t),E_3^h(t),E_{3,TF}^h(t) \right),
\end{cases}
\end{equation}
wherein $g_S$ is given by the right-hand side of \eqref{model2} for the $S$-compartment; $f(a) y^h$ is the linear operator given by the the right-hand side of \eqref{model2} for the $\left( i_1, i_{1,TF}, i_2, i_{2,TF},i_3,i_{1,TS},i_{2,TS} \right)$-compartment and $\phi$ is given by the the right-hand side of \eqref{model2-cond} for the $\left( i_1, i_{1,TF}, i_2, i_{2,TF},i_3,i_{1,TS},i_{2,TS} \right)$-compartment.

\subsection{Optimal control problem}
We assume that a successful scheme is one which reduces the progression to the AIDS stage. Therefore, the control scheme is optimal if it minimizes the objective functional

\begin{equation*}\label{5}
\begin{array}{ll}
J(h)=&\int_{0}^{T_{f}}  \int_0^\infty B(a) \left[\gamma_2(a) i_2(t,a) + \gamma_{2,TF}(a) i_{2,TF}(t,a) \right] \d a \d t   \\&+ \int_0^{T_f} [C_1 h_1^{2}(t)+ C_2 h_2^2(t)+ C_3 h_{2,TF}^2(t) ]\d t
\end{array}
\end{equation*}
where $B$ and $C_j$ are balancing coefficients transforming the integral into cost expended over a finite period of $T_f$ months, see Table \ref{tab-model-parameters}. The first integral, multiply by $B$, is the economic losses from individuals progressing into AIDS stage and the second integral represents the costs for the implementation of three controls. Quadratic expressions of controls are included to indicate non-linear costs potentially arising at high treatment levels.

Our aim is to find $h^*$ satisfying 
\begin{equation} \label{PB}
J(h^*)=\min_{h \in \mathcal{U}} J(h),
\end{equation}
on the set 

\begin{eqnarray*} 
	\mathcal{U}=\left\{
	\begin{array} {l}
		h \in L^\infty(0,T_{f}): 0\le h_1( \cdot) \le h_1^{\max}; 0\le h_2( \cdot) \le h_2^{\max};\\ 0\le h_{2,TF}( \cdot) \le h_{2,TF}^{\max}
	\end{array}\right\},\end{eqnarray*}
where $h_j^{\max}\le 1$ are positive measurable functions.

\subsection{The necessary optimality condition} \label{sec-optimality-condition}
In order to deal with the necessary optimality conditions, we use some results in Feichtinger {\it et al}. \cite{Feichtinger2003}. 
For a given solution $S(t)$,  $y(t,a)=$\\$ \left( i_1, i_{1,TF}, i_2, i_{2,TF},i_3,i_{1,TS},i_{2,TS} \right)(t,a)$  and $\left(E_1(t)/P(t),E_2(t),E_{2,TF}(t),E_3(t)\right)$ of \eqref{model2}-\eqref{model2-cond}, we introduce the following adjoint functions $\lambda_S(t)$,
$\xi(t, a)=(\lambda_{i_1}, \lambda_{i_1,TF}, \lambda_{i_2}, \lambda_{i_{2,TF}}, \lambda_{i_3}, \lambda_{i_{1,TS}},\lambda_{i_{2,TS}})(t,a)$ and $ \left( \zeta_1(t), \zeta_2(t), \zeta_{2,TF}(t), \zeta_3(t) \right)$.

%

The following system holds from \cite{Feichtinger2003}
\begin{equation}\label{adj}
\left\{\begin{array}{rll}
\displaystyle\dot{\lambda}_{S}(t)&=& \big[E_1(t)/P(t)+\mu\big] \lambda_S(t),\\
(\partial_t+\partial_a)\lambda_{i_1}(t,a)&=& \left[\gamma_1(a)+ d_1(a) \right] \lambda_{i_1}(t,a) -\beta(a)\zeta_1(t)/P(t)- \gamma_1(a)\zeta_2(t), \\
(\partial_t+\partial_a)\lambda_{i_{1,TF}}(t,a)&=& \left[\gamma_{1,TF}(a)+ d_1(a) \right] \lambda_{i_{1,TF}}(t,a) -\beta(a)\zeta_1(t)/P(t)- \gamma_{1,TF}(a)\zeta_{2,TF}(t),\\
(\partial_t+\partial_a)\lambda_{i_2}(t,a)&=& -B \gamma_2(a) + \left[\gamma_2(a) + d_2(a) \right] \lambda_{i_2}(t,a) - \varepsilon \beta(a) \zeta_1(t)/P(t) -\gamma_2(a)\zeta_3(t),\\
(\partial_t+\partial_a)\lambda_{i_{2,TF}}(t,a)&=& -B \gamma_{2,TF}(a) + \left[\gamma_{2,TF}(a) + d_2(a) \right] \lambda_{i_{2,TF}}(t,a)\\
&& - \varepsilon \beta(a) \zeta_1(t)/P(t)- \gamma_{2,TF}(a)\zeta_3(t),\\
(\partial_t+\partial_a)\lambda_{i_3}(t,a)&=&    d_3(a) \lambda_{i_3}(t,a) - \delta \beta(a) \zeta_1(t)/P(t) ,\\
(\partial_t+\partial_a)\lambda_{i_{1,TS}}(t,a)&=&    d_1(a) \lambda_{i_{1,TS}} ,\\
(\partial_t+\partial_a)\lambda_{i_{2,TS}}(t,a)&=&    d_2(a) \lambda_{i_{2,TS}} ,\\
\zeta_1(t)&=& (1- h_1(t)) S(t) \lambda_{i_1}(t,0) + p_1 h_1(t) S(t) \lambda_{i_{1,TF}}(t,0)\\
&& + (1-p_1) h_1(t) S(t) \lambda_{i_{1,TS}}(t,0),\\
\zeta_2(t)&=& (1- h_2(t))  \lambda_{i_2}(t,0) + p_2 h_2(t)  \lambda_{i_{2,TF}}(t,0) + (1-p_2) h_2(t) \lambda_{i_{2,TS}}(t,0),\\
\zeta_{2,TF}(t)&=& (1- h_{2,TF}(t))  \lambda_{i_2}(t,0) + p_{2,TF} h_{2,TF}(t)  \lambda_{i_{2,TF}}(t,0)\\
&& + (1-p_{2,TF}) h_{2,TF}(t) \lambda_{i_{2,TS}}(t,0),\\
\zeta_3(t)&=&  \lambda_{i_3}(t,0),\\
\end{array}\right.
\end{equation}
with the boundary conditions
\begin{equation}\label{adj-cond}
\lambda_{S}(T_f)=0, \quad
\lambda_{v}(T_f,a)=0,  \text{ for } v\in \{i_1,i_{1,TF},i_2,i_{2,TF},i_3,i_{1,TS},i_{2,TS}\} \text{ and for all } a> 0.
\end{equation}

The Hamiltonian of System \eqref{model2}-\eqref{model2-cond} is given by 
$$H(t,h)= \xi(t,0)\cdot \phi\left(h,E_1/P,E_2,E_{2,TF},E_3\right) + \lambda_S(t) g_S(t) +L_0(t) + \int_0^\infty L(t, a) \d a ,$$ 
where $L$ and $L_0$ are the first and second integrand of $J$ respectively.

Moreover, if $h^*$ is a solution of \eqref{PB}, then it is characterized by
\begin{equation}\label{control-carac}
\begin{split}
 h_1^\ast(t)=&\max(0, \min(\hat{h}_1(t), h_{1}^{\max} )),\\
h_{2}^\ast(t)= & \max(0, \min(\hat{h}_{2}(t), h_{2}^{\max} )),\\
 h_{2,TF}^\ast(t)=& \max(0, \min(\hat{h}_{2,TF}(t), h_{2,TF}^{\max} )),
\end{split}
\end{equation}
wherein 
\begin{equation*}
\begin{array}{rll}
\hat{h}_1(t)&=&\displaystyle\frac{\left(\lambda_{i_1}(t,0)- p_1 \lambda_{i_{1,TF}}(t,0) -(1-p_1) \lambda_{i_{1,TS}}(t,0) \right) S(t) E_1(t)/P(t)}{2C_1}, \\
\hat{h}_2(t)&=&\displaystyle\frac{\left(\lambda_{i_2}(t,0)- p_2 \lambda_{i_{2,TF}}(t,0)-(1-p_2)\lambda_{i_{2,TS}}(t,0) \right)  E_2(t)}{2C_2} ,\\
\hat{h}_{2,TF}(t)&=&\displaystyle\frac{\left(\lambda_{i_2}(t,0)- p_{2,TF} \lambda_{i_{2,TF}}(t,0)- (1-p_{2,TF}) \lambda_{i_{2,TS}}(t,0) \right)  E_{2,TF} (t)}{2C_3}.
\end{array}
\end{equation*}

The value function $\hat{h}_1$ is obtained from equation $\displaystyle\frac{\partial H}{\partial h_1}=0$, using the boundary condition whenever $0\le h_1^\ast(t) \le h_1^{\max}$, and similarly for $\hat{h}_{2}$ and $\hat h_{2,TF}$. The proof of the existence of such controls is given in Section \ref{sec-control-existence}.

The state system \eqref{model2}-\eqref{model2-cond} and the adjoint system \eqref{adj}-\eqref{adj-cond} together with the control characterization \eqref{control-carac} form the optimality system to be solved numerically. Since the state equations have initial conditions and the adjoint equations have final time conditions, we cannot solve the optimality system directly by only sweeping forward in time. Thus, an iterative algorithm, "forward-backward sweep method", is used (see \cite{Lenhart2007}).

\section{Discussion} \label{sec-discussion}
For all simulations consider in this section, in addition to the parameter reference values given by Table \ref{tab-model-parameters}, the number of months for the strategy deployment is $T_f= 420$ (35 years). Moreover, the progression rate in classes $i_{1,TF}$ and $i_{2,TF}$, $\gamma_{1,TF}(a)$ and $\gamma_{2,TF}(a)$, have the same shape as $\gamma_{1}(a)$ and $\gamma_{2}(a)$ define in Section \ref{sec-para1}. However, the duration of each stage $i_{1,TF}$ and $i_{2,TF}$ is $T_0^1/2$ and $T_0^2/2$ respectively \cite{Eaton2014}. At the beginning of intervention strategies, initial condition of state variables $i_1$, $i_2$ and $i_3$ is the same as in Section \ref{sec-initial1}. Furthermore, we set $i_{1,TS}(0,a)= i_{1,TF}(0,a)= i_{2,TS}(0,a) = i_{2,TF}(0,a)=0$ for all $a\ge 0$.

\paragraph{Performance of controls strategies.} 
The performance is estimated by assessing the total number of AIDS cases during $T$ months relatively to AIDS cases without any strategy. Formally, the performance of the intervention strategy $h=(h_1,h_2,h_{2,TF})$ is 
\[
\Delta^h(P)= \frac{\int_0^{T} \int_0^\infty i_3(t,a)\big |_h \d a \d t } { \int_0^{T} \int_0^\infty i_3(t,a)\big |_{h= 0} \d a \d t },
\]
wherein $P= \left(p_1,p_2,p_{2,TF}\right)$. $\Delta^h(P)$ provides an estimate of the number of AIDS obtained with the control $h$ ($\int \int i_3(t,a)\big |_h \d a \d t$) relatively to the number of AIDS that would have been obtained without any control ($\int \int i_3(t,a)\big |_{h= 0} \d a \d t$). For example, $\Delta^h(P)=0.3$ indicates that the number of AIDS with control is $1/0.3 \approx 3$ times smaller than without control. However, a value of $\Delta^h(P)>1$ indicates that intervention strategy has negative impact on the epidemics outbreak.

Figure \ref{Figure_model_control1} illustrates the effect of intervention strategies on the number of AIDS, $I_3^{\text{tot}}(t)= \int_0^\infty i_3(t,a) \d a $. Parameters of the model are set to their reference values given in Table \ref{tab-model-parameters} leading to $R_0=2.55$ (without controls) and we also set $p_1=p_2=p_{2,TF}= 10\%$. Optimal three-part intervention strategies, $h_1$, $h_2$ and $h_{2,TF}$, provide considerable reductions in the severity of the projected outbreaks (Figure \ref{Figure_model_control1}). Indeed, the performance of controls is $\Delta^h(P) \approx 0.018$, meaning that the total number of AIDS cases with controls is 55 times smaller than without controls.

\begin{figure}[!htp]
	\center{
		\begin{tabular}{cc}
			\includegraphics[width=2.5in] {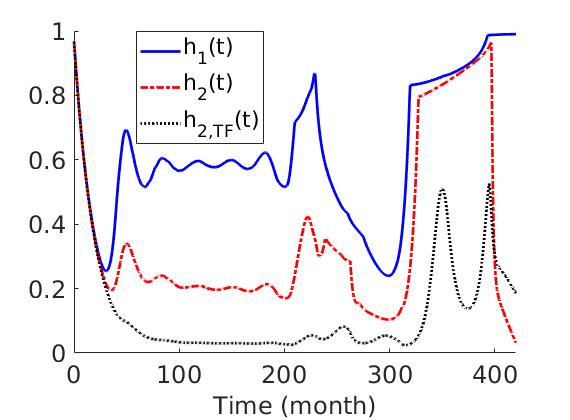} & \includegraphics[width=2.5in] {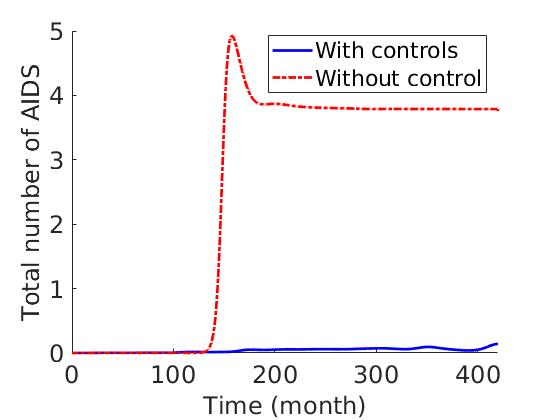} 
		\end{tabular}
	}
	\caption{Effect of intervention strategies on the number of AIDS, $I_3^{\text{tot}}(t)= \int_0^\infty i_j(t,a) \d a $. (Left) Intervention strategies $h_1$, $h_2$ and $h_{2,TF}$ with respect to the time $t$. (Right) The dynamics of the number of AIDS cases with and without controls. Parameters of the model are set to their reference values given in Table \ref{tab-model-parameters} leading to $R_0=2.55$ (without controls). We also set $p_1=p_2=p_{2,TF}= 10\%$.} \label{Figure_model_control1}
\end{figure}  

\paragraph{Effect of intervention only at HIV stage 1 or 2.} Notice that the optimal control problem can be formulated to find the optimal strategy of each HIV stage intervention method when used alone. Moreover, without any intervention at HIV stage 1, the only plausible intervention at stage 2 is $h_2$. The control $h_1$ alone has a significant effect on the epidemic outbreaks with a performance $\Delta^{h_1}(p_1) \le 0.2$ when the probability of treatment drop out at HIV stage 1 $p_1$ ranges from 0 to 80\% in the host population. In other words, for $p_1\le 0.8$, the total number of AIDS cases with controls $h_1$ is, at least, 5 times smaller than without controls. See Figure \ref{Figure_perform_controls_alone}. The control $h_2$ alone has a significant effect on the epidemic outbreaks only when the probability of treatment drop out at HIV stage 2, $p_2$ (with not ART at stage 1) is very small ($p_2< 0.1$). The performance $\Delta^{h_2}(p_2)$ increases linearly with the value of $p_2$ and crosses the unity around $p_2=10\%$, after what the intervention strategy will have a negative effect on the epidemic outbreaks in the host population, Figure \ref{Figure_perform_controls_alone}. Therefore, before introducing ART, investigations should be addressed to know whether the host population is well sensibilized on ART treatment or not. These investigations will probably help in reducing the probability of treatment drop out.

\begin{SCfigure}
	\caption{Performance of HIV intervention strategy when used alone. (Dotted line) Performance of HIV stage 1 intervention alone $\Delta^{h_1}(p_1)$ with respect to the probability of ART drop out at stage 1, $p_1$. (Solid line) Performance of HIV stage 2 intervention alone $\Delta^{h_2}(p_2)$ with respect to the probability of ART drop out at stage 2, $p_2$ (with no ART at stage 1).} \label{Figure_perform_controls_alone}
	\includegraphics[width=3in] {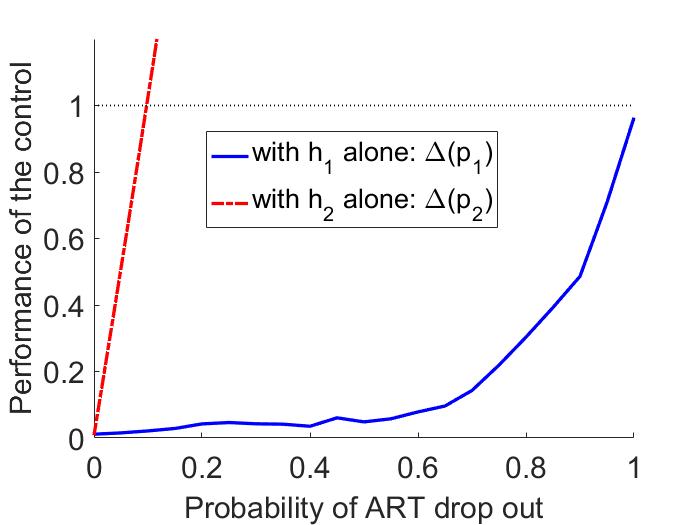} 
\end{SCfigure}

\paragraph{Combined effect of intervention at both HIV stages.} Controls $h_1$ and $h_{2,TF}$ have a significant effect on the disease outbreak with the performance of both controls $\Delta^{h_1,h_{2,TF}} \left(p_1,p_{2,TF}\right) < .5$, when the probability of treatment drop out at HIV stage 1, $p_1 \le 60\%$ and what ever the probability of treatment drop out at HIV stage 2 (with ART failure at stage 1), $p_{2,TF} \in (0,1)$. See Figure \ref{Figure_perform_h1_h2}(Left). However, for high values of $p_1$ ($p_1>.6$) and for $p_{2,TF}>.2$, both controls have a negative effect on the epidemics outbreak with the performance  $\Delta^{h_1,h_{2,TF}} \left(p_1,p_{2,TF}\right) \ge 1$, see Figure \ref{Figure_perform_h1_h2}(Left). These configurations are quite similar with the combined effect of controls  $h_1$ and $h_{2}$, see Figure \ref{Figure_perform_h1_h2}(Right). However, notice that controls $h_1$ and $h_{2}$ performed better than controls $h_1$ and $h_{2,TF}$, even for values of $p_1$ up to $80\%$. 

\begin{figure}[!htp]
	\center{
		\begin{tabular}{cc}\includegraphics[width=3.1in] {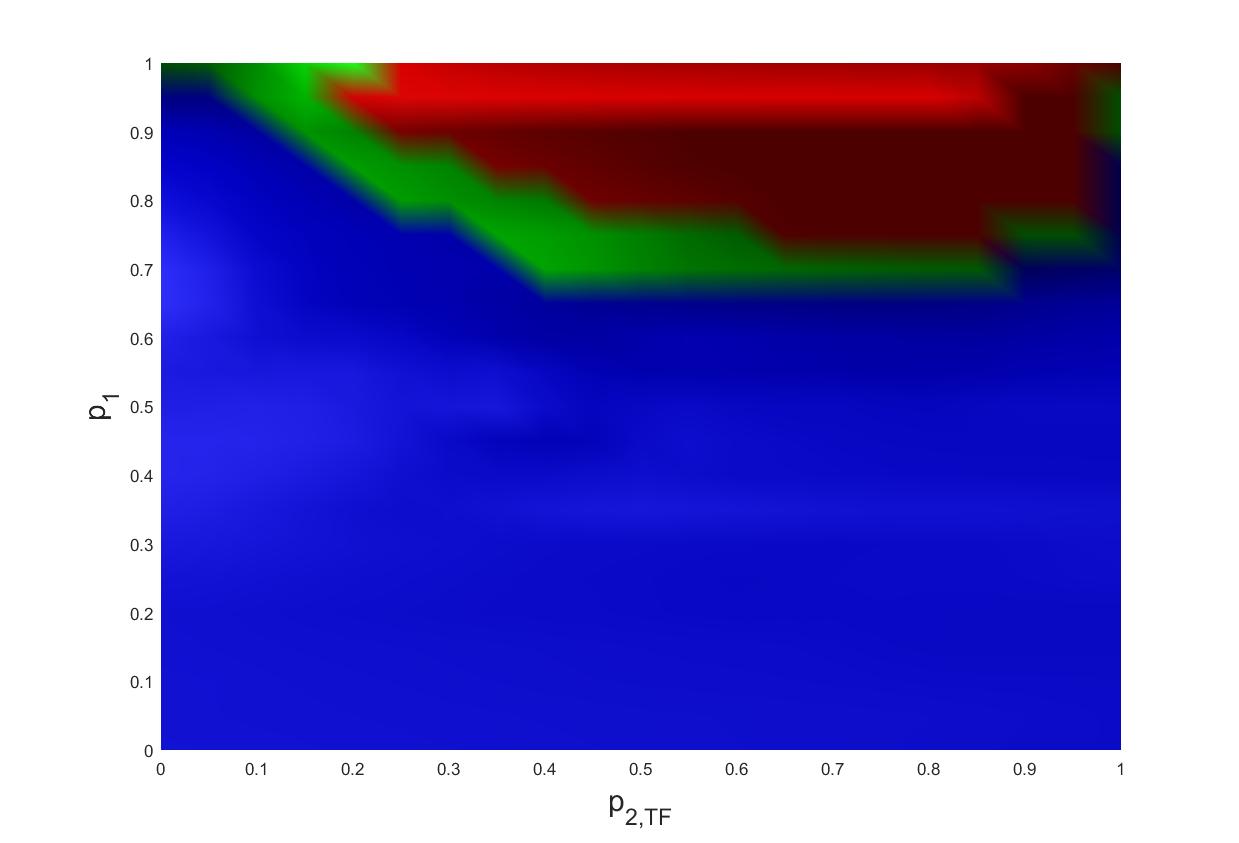} & \includegraphics[width=3in] {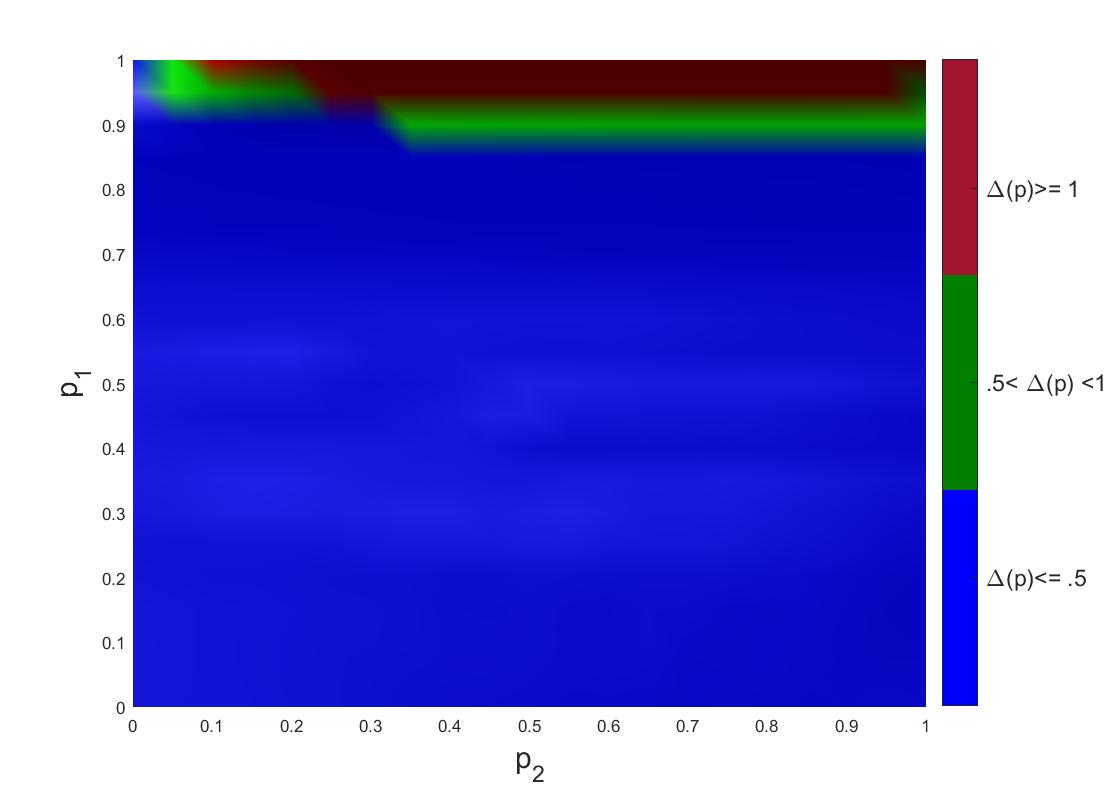}
		\end{tabular} 
	}
	\caption{Combined effect of intervention at HIV stages 1 and 2. (Left) The performance $\Delta^{h_1,h_{2,TF}} \left(p_1,p_{2,TF}\right)$ of controls $h_1$ and $h_{2,TF}$ with respect to probabilities of treatment drop out at HIV stage 1, $p_1$ and treatment drop out at HIV stage 2 (with ART failure at stage 1), $p_{2,TF}$. (Right) The performance $\Delta^{h_1,h_2} \left(p_1,p_{2}\right)$ of controls $h_1$ and $h_{2}$ with respect to probabilities of treatment drop out at HIV stage 1, $p_1$ and treatment drop out at HIV stage 2 (with no ART at stage 1), $p_{2}$. (Color figure online)} \label{Figure_perform_h1_h2}
\end{figure}

\begin{small}
	\begin{table}[H]
		\caption{Description of the state variables and parameters of the model}
		\label{tab-model-parameters}      
		\begin{tabular}{lll}
			\hline
			\multicolumn{3}{c}{State variables}\\
			\hline
			$S(t)$& \multicolumn{2}{l}{Susceptible individuals at time $t$} \\
			$i_j(t,a)$& \multicolumn{2}{l}{HIV individuals stage $j$ at time $t$, in the stage for duration $a$ (with no ART)} \\
			$i_{j,TF}(t,a)$& \multicolumn{2}{l}{HIV individuals stage $j$ with ART drop out at time $t$, in the stage for duration $a$} \\
			$i_{j,TS}(t,a)$& \multicolumn{2}{l}{HIV individuals stage $j$ with ART success at time $t$, in the stage for duration $a$} \\
			\hline
			\multicolumn{3}{c}{Fixed model parameters}\\
			Parameters & Description; Value& Ref  \\ 
			\hline
			$\Lambda$ & Entering flux into $S$; 30  & Ass  \\
			$\mu$ & Exit rate of $S$; 1/30 PMs & Ass \\
			$\bar{\gamma}_j$ &  Progression rate from stage $j$ to $j+1$; 1 PMs  & Ass\\
			$d_j$ & Death rate of  HIV stage $j=1,2$; $\mu$ PMs  & Ass \\
			$d_3$ & Death rate of HIV late stage; 0.14 per 1000 PYs+ $\mu$ & \cite{onusida}\\
			$\rho_0$ & The rate of infectiousness; $2.48$ & Ass
			\\ \hline
			\multicolumn{3}{c}{Variable model parameters}\\
			Parameters & Description; Reference value; \{Range\}& Ref  \\ \hline
			$T_0^1$ & Duration of HIV stage 1; 2.90; \{1.23-6.00 \} months & \cite{Hollingsworth2008}\\
			$T_0^2$ & Duration of HIV stage 2; 120; \{108-180 \} months & \cite{Hollingsworth2008}\\
			$\beta_1$ & Transmission hazard stage 1; 276; \{131-509\} per 100 PYs & \cite{Hollingsworth2008}\\
			$\beta_2$ & Transmission hazard stage 2; 10.6; \{7.61-13.3\} per 100 PYs & \cite{Hollingsworth2008}\\ 
			$\beta_3$ & Transmission hazard stage 3; 0 per 100 PYs & Ass\\
			$p_1$& Proba. of ART drop out at HIV stage 1;\{0-1\} &Ass\\
			$p_2$& Proba. of ART drop out at stage 2 (with no ART at stage 1); \{0-1\} &Ass\\
			$p_{2,TF}$& Proba. of ART drop out at stage 2 (with ART failure at stage 1); \{0-1\} &Ass\\
			\hline
			\multicolumn{3}{c}{Cost coefficients in objective functional}\\ \hline 
			B &Balancing coefficient; \{50-80\} \euro &\cite{taverne2008}\\
			$C_j$& Balancing coefficients; \{50-80\} \euro&\cite{taverne2008}\\
			\hline
		\end{tabular}
		PMs=person-months; PYs=person-years; Ass=Assumed.
	\end{table}
\end{small}
\section{Proof of Theorem \ref{thm-semiflow}} \label{sec-proof-semiflow}

It is easy to check that operator $A$ is a Hille-Yosida operator. Then standard results apply to provide the existence and uniqueness of a mild solution to \eqref{model1} (we refer to \cite{Magal2009,Thieme1990,Thieme2011} for more details). The Volterra formulation is also standard and we refer to \cite{Iannelli1994,Webb1985} for more details.

For estimate 2., let $\varphi_0 \in \mathcal{X}_{0+}$; then adding up the $S$ equation
together with the $i_j$ equations of \eqref{model1} yields
\[
\dot P(t)= \frac{d}{dt} \left(S(t)+ \sum_{j=1}^3 \int_0^\infty i_j(t,a)\d a \right) \le \Lambda -\mu P(t).
\]
From where one deduces estimate 2. 

The bounded dissipativity of the semiflow $\left\{ \Phi(t,\cdot) \right\}_t$ is a direct consequence of estimate 2. It remains to prove the asymptotic smoothness. For that ends, let $B$ be a forward invariant bounded subset of $\mathcal{X}_{0+}$. According to the results in \cite{Sell2002} it is sufficient to show that the semiflow is asymptotically compact on $B$. We first claim that 
\begin{claim}\label{claim_lips}
Let Assumption \ref{asym1} be satisfied. Then, functions $E_j$'s are Lipschitz continuous on $\R_+$.
\end{claim}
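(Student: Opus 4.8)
The plan is to establish the Lipschitz continuity of the functions $E_1,E_2,E_3$ directly from the Volterra representation formula \eqref{eq-volterra} together with the a priori bounds on the semiflow provided by estimate 2. of Theorem \ref{thm-semiflow}. First I would note that, since $B$ is a forward invariant bounded set, all quantities $S(t)$, $P(t)$, $\|i_j(t,\cdot)\|_{L^1}$ stay bounded uniformly in $t\ge 0$, and moreover $P(t)$ stays bounded away from $0$ (because $S(t)\ge \min\{S_0,\Lambda/\mu\}e^{-(\mu+\text{const})t}$ is not itself bounded below, but on $B$ one uses that $P(t)\ge S(t)$ and the $S$-equation gives a uniform lower bound on bounded invariant sets; alternatively one restricts attention to $t$ large since the attractor is bounded below). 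Denote by $M$ a common bound and by $\eta>0$ a uniform lower bound for $P(t)$ on $B$. The key structural fact is that $E_1(t)=\frac{i_1(t,0)P(t)}{S(t)}$ is, up to the factor $S(t)/P(t)$, exactly the boundary term $i_1(t,0)$, and similarly $E_{j+1}(t)=i_{j+1}(t,0)$ for $j=1,2$; so it suffices to control the Lipschitz norm of $t\mapsto i_j(t,0)$ and of $t\mapsto S(t)/P(t)$.

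The main computation is to bound $|E_j(t+h)-E_j(t)|$ for small $h>0$. Using \eqref{eq-volterra}, split the defining integral $E_1(t)=\int_0^\infty \beta(a)(i_1+\varepsilon i_2+\delta i_3)(t,a)\,\d a$ at $a=t$: on $a<t$ the integrand is $\beta(a)\frac{S(t-a)}{P(t-a)}E_1(t-a)D_1(a)$ (and analogously for $i_2,i_3$ via $E_2,E_3$), while on $a>t$ it is the initial-data contribution $\beta(a)i_{10}(a-t)D_1(a)/D_1(a-t)$. For the difference $E_1(t+h)-E_1(t)$, after the change of variables $a\mapsto a+h$ in the $t+h$ integral, one gets three types of terms: (i) a "boundary slice" $\int_{t}^{t+h}$ of size $O(h)$ controlled by $\|\beta\|_\infty M$; (ii) a term $\int_0^{t} \beta(a+h)-\beta(a)$ type difference, controlled by the almost-everywhere Lipschitz continuity of $\beta$ (Assumption \ref{asym1}.2) times $M$; and (iii) a self-referential term $\int_0^{t}\beta(a)\,[\,g(t+h-a)-g(t-a)\,]\,D_1(a)\,\d a$ where $g(s)=\frac{S(s)}{P(s)}E_1(s)$. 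Handling the decay factors $D_1(a),D_2(a),D_3(a)$ contributes additional $O(h)$ terms since $D_j$ is Lipschitz (being an exponential of a bounded integral). The analogous analysis for $E_2,E_3$ uses the Lipschitz continuity of $\gamma_1,\gamma_2$ from Assumption \ref{asym1}.2 in place of that of $\beta$.

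Assembling these estimates, one obtains an inequality of the form
\begin{equation*}
|E_j(t+h)-E_j(t)| \le C_1 h + C_2 \int_0^{t}\sum_{k=1}^3 |E_k(t+h-a)-E_k(t-a)|\,D_j(a)\,\d a,
\end{equation*}
valid for all $j=1,2,3$ and all $t\ge0$, $h\in(0,1]$, where $C_1,C_2$ depend only on $M$, $\eta$, and the $L^\infty$ and Lipschitz bounds of $\beta,\gamma_j,d_j$. One also needs the companion estimate for $g$: since $S/P$ is a smooth function of $(S,P)$ bounded away from the singularity on $B$, and $S,P$ are themselves Lipschitz (their time derivatives are bounded on $B$), $g$ is Lipschitz with constant controlled by $\|E_1\|_\infty$ plus the Lipschitz constant of $E_1$, which feeds back into $C_1,C_2$ consistently. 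Setting $\psi(t)=\sup_{0\le s\le t}\sum_j |E_j(s+h)-E_j(s)|$ and using $\int_0^t D_j \le \overline D_j<\infty$ is not quite enough to close directly, so instead I would apply a Gronwall-type argument: the inequality gives $\psi(t)\le C_1 h + C_2\int_0^t \psi(s)\,\d s$ after bounding $D_j\le 1$ and enlarging the range, whence $\psi(t)\le C_1 h\, e^{C_2 t}$. On any finite time horizon this yields the Lipschitz bound; for the global statement one uses that on the compact attractor $\mathcal B$ the relevant constants can be taken uniform and the exponential growth is absorbed, or more simply one only needs local Lipschitz continuity (on $\R_+$, uniformly on compacts) for the subsequent asymptotic-compactness argument. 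The main obstacle is the self-referential term (iii): it prevents an immediate pointwise bound and forces the Gronwall step, and one must be careful that the feedback through $g=\frac{S}{P}E_1$ does not introduce a term with an uncontrolled constant — this is handled by first bounding $\|E_1\|_\infty\le \|\beta\|_\infty M$ from estimate 2., which makes the Lipschitz constant of the multiplier $S/P$ explicit before closing the loop.
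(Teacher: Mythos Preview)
Your approach differs from the paper's in a way that leaves a genuine gap. The paper does \emph{not} split the defining integral at $a=t$ and does not incur a self-referential term at all. Instead it splits at $a=\eta$ (your $h$) and uses the characteristics relation
\[
i_j(t+\eta,a+\eta)=i_j(t,a)\,\exp\!\Big(-\int_a^{a+\eta}(\gamma_j+d_j)(\sigma)\,\d\sigma\Big),
\]
which follows from the Volterra formula for \emph{all} $a\ge0$ (both regimes $a<t$ and $a\ge t$). After the change of variables $a\mapsto a+\eta$ in $\int_0^\infty \beta(a)i_j(t+\eta,a)\,\d a$, one compares $\beta(a+\eta)i_j(t,a)e^{-\int_a^{a+\eta}(\cdot)}$ directly with $\beta(a)i_j(t,a)$; the only pieces are the small slice $\int_0^\eta$, the Lipschitz increment of $\beta$, and the factor $|e^{-\int_a^{a+\eta}(\cdot)}-1|\le(\|\gamma_j\|_\infty+\|d_j\|_\infty)\eta$. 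Each of these is $O(\eta)$ with constants depending only on $\|\beta\|_\infty$, the Lipschitz constant of $\beta$, $\|\gamma_j\|_\infty$, $\|d_j\|_\infty$, and the uniform $L^1$ bound on $i_j(t,\cdot)$. No $E_j$-difference appears on the right-hand side, so no Gronwall step is needed and the Lipschitz constant is uniform in $t$.

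Your route, by contrast, produces the term (iii) and closes via Gronwall, yielding only $\psi(t)\le C_1 h\,e^{C_2 t}$, i.e.\ a Lipschitz constant that blows up as $t\to\infty$. This is weaker than the claim as stated, and your fallback---``one only needs local Lipschitz for the asymptotic-compactness argument''---is not correct here: in the proof of Theorem~\ref{thm-semiflow} item~3 one applies Arzel\`a--Ascoli to the shifted functions $t\mapsto E_j^n(t+t_n)$ with $t_n\to\infty$, so equicontinuity must be uniform in $t_n$; an exponentially growing Lipschitz constant gives no uniform modulus. The hand-wave about ``on the compact attractor $\mathcal{B}$ the constants can be taken uniform'' is circular, since the existence of $\mathcal{B}$ is exactly what asymptotic smoothness is being used to establish. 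The fix is simple: adopt the paper's splitting at $a=\eta$ and use the characteristics identity above to eliminate the self-reference.
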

Therefore, let us consider a sequence of solutions $\left(S^n,i_1^n,i_2^n,i_3^n\right)_p$ that is equibounded in $\mathcal{X}_{0+}$ and a sequence $\{t_n\}_p$ such that $t_n \to +\infty$. Let $I_j^n(t) = \int_{0}^\infty i_j^n(t,\sigma)\d \sigma$, $P^n(t)= S^n(t) + \sum_{j=1}^3 I_j^n(t)$ and  $E_j^n$ (defined by \eqref{eq-E1-E2}); with $j=1,2,3$; the corresponding sequences. Since $S^n$, $P^n$, $I_j^n$'s and $E_j^n$'s are uniformly bounded in the Lipschitz norm, the Arzela-Ascoli theorem implies that, possibly along a subsequence, one may assume that $S^n(t+t_n)\to \tilde{S}$, $P^n(t+t_n)\to \tilde{P}$ and $E_j^n(t+t_n)\to \tilde{E_j}$ locally uniformly for for $t\in \R$. It remains to deal with the sequences $\{i_j^n\}_n$ with $j=1,2,3$. Denoting by $\tilde {i_1^n}(t,\cdot)= i_1^n(t+t_n,\cdot)$ and using the Volterra formulation \eqref{eq-volterra} it comes 
\begin{equation*}
\tilde {i_1^n}(t,a)= \begin{cases}
i_{10}(a-t+t_n) \frac{D_1(a)}{D_1(a-t+t_n)}, \quad \text{ for } t+t_n<a,\\
\frac{S^n(t-a+t_n)}{P^n(t-a+t_n)}E_1^n(t-a+t_n) D_1(a), \quad \text{ for } t+t_n\ge a.
\end{cases}
\end{equation*}
Since $\frac{S^n(t-a+t_n)}{P^n(t-a+t_n)}E_1^n(t-a+t_n) D_1(a)$ converges locally uniformly towards the function $\frac{\tilde S(t-a)}{\tilde P(t-a)}\tilde{E_1}(t-a) D_1(a)$ as $t_n \to +\infty$, we easily conclude that 
\[
{i_1^n}(t_n,\cdot)= \tilde{i_1^n}(0,\cdot) \to \frac{\tilde S(-\cdot)}{\tilde P(-\cdot)}\tilde{E_1}(-\cdot) D_1(\cdot) \text{ in } L^1(0,\infty,\R).
\]
Similarly, we also find that 
\[
{i_j^n}(t_n,\cdot)= \tilde{i_j^n}(0,\cdot) \to  \tilde{E_j}(-\cdot) D_j(\cdot) \text{ in } L^1(0,\infty,\R)  \text{, and for } j=2,3.
\]
Item 3. follows. 

For item 4. of the theorem, items 2. and 3. show that $\Phi$ is point dissipative, eventually bounded on bounded sets, and asymptotically smooth. Thus, item 4. follows from Theorem 2.33 of \cite{Smith2011}.

To complete the proof of the theorem, it remains to proof Claim \ref{claim_lips}.

\begin{proof}[Proof of Claim \ref{claim_lips}]
Let $t\in\mathbb{R}^+$ and $\eta >0$. Recalling \eqref{eq-E1-E2} and setting $B_1(t)= \int_0^\infty \beta(a) i_1(t,a)\d a$,  $B_2(t)= \varepsilon \int_0^\infty \beta(a) i_2(t,a)\d a$ and $B_3(t)= \delta \int_0^\infty \beta(a) i_3(t,a)\d a$, it comes 
\begin{equation*}
\begin{split}
B_1(t+ \eta)-B_1(t)=  & \int_0^\eta
\beta(a) i_1(t+\eta,a) \d a+ \int_\eta^\infty
\beta(a) i_1(t+\eta,a) \d a-\int_0^\infty \beta(a) i_1(t,a) \d a\\
= & \int_0^\eta
\beta(a) i_1(t+\eta-a,0) D_1(a) \d a+ \int_\eta^\infty
\beta(a) i_1(t+\eta,a) \d a-\int_0^\infty \beta(a) i_1(t,a) \d a.
\end{split}
\end{equation*}
Since the semiflow $\Phi$ is bounded and by Assumption \ref{asym1} (item 2.), we can find $C>0$ such that 
\begin{equation*}
\begin{split}
B_1(t+ \eta)-B_1(t) \le & C \|\beta\|_{\infty}^2 \eta + \int_0^\infty
\beta(a+\eta) i_1(t+\eta,a+\eta) \d a-\int_0^\infty \beta(a) i_1(t,a) \d a.
\end{split}
\end{equation*}
Then, recalling \eqref{eq-volterra} and combining the integrals, we write 
\begin{equation*}
\begin{split}
B_1(t+ \eta)-B_1(t) \le & C \|\beta\|_{\infty}^2 \eta +
\int_0^\infty \beta(a+\eta) \left(e^{-\int_a^{a+\eta}(\gamma_1(\sigma)+d_1(\sigma))\d \sigma}- 1 \right)  i_1(t,a) \d a\\
&+
 \int_0^\infty \left(\beta(a+\eta) - \beta(a) \right) i_1(t,a) \d a.
\end{split}
\end{equation*}
Again by Assumption \ref{asym1} (item 2.), we have $1 \ge e^{-\int_a^{a+\eta}(\gamma_1(\sigma)+d_1(\sigma))\d \sigma}\ge e^{-(\|\gamma_1\|_{\infty}+\|d_1\|_{\infty})\eta} \ge 1- (\|\gamma_1\|_{\infty}+\|d_1\|_{\infty})\eta  $. Therefore, $ \beta(a+\eta) \left|e^{-\int_a^{a+\eta}(\gamma_1(\sigma)+d_1(\sigma))\d \sigma}- 1 \right| \le  \|\beta\|_{\infty} (\|\gamma_1\|_{\infty}+\|d_1\|_{\infty})\eta$ and since the semiflow $\Phi$ is bounded we can find a positive constant $C$ such that  
\begin{equation*}
\begin{split}
B_1(t+ \eta)-B_1(t) \le & C \|\beta\|_{\infty}^2 \eta +C  \|\beta\|_{\infty} (\|\gamma_1\|_{\infty}+\|d_1\|_{\infty})\eta\\
&+
\int_0^\infty \left(\beta(a+\eta) - \beta(a) \right) i_1(t,a) \d a.
\end{split}
\end{equation*}
Next, using Assumption \ref{asym1} (item 3.) and the boundedness of the semiflow, we find a positive constant $C$ such that 
\[
\int_0^\infty \left|\beta(a+\eta) - \beta(a) \right| i_1(t,a) \d a \le C \eta.
\] 
From where, we find $C_1>0$ such that  
\[
\left|B_1(t+ \eta)-B_1(t) \right| \le C_1 \eta.
\]
Using the same arguments, we find $C_2>0$ such that 
\[
\left|B_2(t+ \eta)-B_2(t) \right| \le C_2 \eta.
\]
Since $E_1=B_1+B_2+B_3$, combining the two previous inequalities, it follows that $E_1$ is Lipschitz on $\R_+$. Similarly, $E_2$ and $E_3$ are also Lipschitz and this ends the proof of the claim.
\end{proof}

\section{Proof of Theorem \ref{Theo-asymptotic}} \label{sec-proof-thm24}
First, let us introduce  some useful technical materials by establishing some properties of
the complete solutions of system \eqref{model1}.

\subsection{Technical materials} 
The first result deals with spectral properties of the linearized semiflow $\Phi$ at a given equilibrium point $\varphi^*= \left(v^*,0_{\R^3},u_1^*,u_2^*,u_3^*\right) \in \mathcal{X}_{0^+} $. The linearized system at $\varphi^*$ reads
\[
\frac{d \varphi(t)}{dt}= \left( A+F^*\right) \varphi(t),
\] 
where $F^*$ is a linear bounded operator given by
\[
F^*\left(
v,0_{\R^3},
u_1,u_2,u_3
\right)^T
=\left(\begin{array}{c}
-\mathcal{W}^*,
\mathcal{W}^*,
\int_0^\infty \gamma_1(a) u_1(a) \d a,
\int_0^\infty \gamma_2(a) u_2(a) \d a,
0,0,0
\end{array}\right)^T,
\]
wherein
\[
\begin{split}
\mathcal{W}^*=&\frac{1}{ \left(v^*+  \overline A^*\right)^2} \left[ v \overline A^*  \overline B^* -  v^* \overline B^* \int_0^\infty \sum_{j=1}^3u_j(a) \d a \right.\\
&\left.  +	v^*\left(v^*+  \overline A^*\right) \int_0^\infty \beta(a) \left(u_1(a)+\varepsilon u_2(a) +\delta u_3(a)\right) \d a  \right],
\end{split}
\]
and
\[
\begin{split}
\overline A^*=& \int_0^\infty \sum_{j=1}^3u_j^*(a) \d a,\\
\overline B^*=& \int_0^\infty\beta(a) \left(u_1^*(a)+\varepsilon u_2^*(a) +\delta u_3^*(a)\right) \d a.
\end{split}
\]

\begin{lemma}\label{lem-spectrum}
	Let us set $\Sigma= \left\{ \lambda \in  \C: R_e(\lambda)> -\mu 
	\right\}$. The spectrum $\sigma \left(A+F^* \right) \cap \Sigma $ consists of a point spectrum and one has
	\[
	\sigma \left(A+F^* \right) \cap \Sigma= \left\{ \lambda\in \Sigma: \chi(\lambda,\varphi^*)=0
	\right\}
	\]
	where the function $\chi(\cdot,\varphi^*): \Sigma \to \C$ is defined by 
	\begin{equation}\label{eq-chi}
	\chi(\cdot,\varphi^*)= 1 - \frac{\mathcal{Q}^*(\cdot) (\cdot+\mu)}{\overline A^* \overline B^*+ \left(v^*+  \overline A^* \right)^2(\cdot+\mu)}, 
	\end{equation}
	and wherein 
	\[
	\begin{split}
	\mathcal{Q}^*(\lambda)=& -  v^* \overline B^* \left( D_1^\lambda + D_2^\lambda \Gamma_1(\lambda) + D_3^\lambda  \Gamma_1(\lambda)  \Gamma_2(\lambda)\right)\\
	&+v^*\left(v^*+  \overline A^* \right) \left( \Omega_1(\lambda) + \varepsilon\Omega_2(\lambda) \Gamma_1(\lambda) + \delta \Omega_3(\lambda)  \Gamma_1(\lambda)  \Gamma_2(\lambda)\right) ,\\
	\Omega_j(\lambda)= & \int_{0}^\infty \beta(a) D_j(a)e^{-\lambda a}  \d a,\quad \Gamma_j(\lambda) = \int_0^\infty \gamma_j(a) D_j(a)e^{-\lambda a}  \d a,\\
	D_j^\lambda =& \int_0^\infty  D_j(a)e^{-\lambda a}  \d a.
	\end{split}
	\]
\end{lemma}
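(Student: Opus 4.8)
The plan is to combine the Hille--Yosida property of $A$ (already noted at the beginning of Section~\ref{sec-proof-semiflow}) with the finite rank of $F^*$ to reduce $\sigma(A+F^*)\cap\Sigma$ to a point spectrum, and then to compute the eigenvalues explicitly by exploiting the transport structure of the $(u_1,u_2,u_3)$-block.

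\emph{Step 1: $\Sigma\subset\rho(A)$.} Given $\psi=(\psi_S,\psi_1^0,\psi_2^0,\psi_3^0,g_1,g_2,g_3)\in\mathcal{X}$, I would solve $(\lambda I-A)\varphi=\psi$ for $\varphi=(v,0_{\R^3},u_1,u_2,u_3)\in D(A)$: the $S$-line gives $v=\psi_S/(\lambda+\mu)$ (legitimate since $\lambda\neq-\mu$ on $\Sigma$), the $\R^3$-lines prescribe the traces $u_j(0)=\psi_j^0$, and the $L^1$-lines become linear first-order ODEs with solutions $u_j(a)=\psi_j^0 e^{-\lambda a}D_j(a)+\int_0^a e^{-\lambda(a-s)}\frac{D_j(a)}{D_j(s)}g_j(s)\,\d s$. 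Because $d_j\geq\mu$ forces $D_j(a)\leq e^{-\mu a}$, the functions $a\mapsto e^{-\lambda a}D_j(a)$ lie in $L^1(0,\infty)$ whenever $R_e(\lambda)>-\mu$, and the convolution term is bounded in $L^1$ by Young's inequality; hence $\varphi\in D(A)$ and $(\lambda I-A)^{-1}$ is bounded, so $\Sigma\subset\rho(A)$.

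\emph{Step 2: reduction to an eigenvalue problem.} Since the range of $F^*$ is contained in $\R\times\R^3\times\{0_{L^1}\}$, $F^*$ is bounded and of finite rank, so $K_\lambda:=(\lambda I-A)^{-1}F^*$ is compact for each $\lambda\in\Sigma$. From the factorisation $\lambda I-(A+F^*)=(\lambda I-A)(I-K_\lambda)$ on $D(A)$ one gets that $\varphi\in D(A)$ solves $(\lambda I-(A+F^*))\varphi=\psi$ iff $\varphi\in\mathcal{X}$ solves $(I-K_\lambda)\varphi=(\lambda I-A)^{-1}\psi$ (and any such $\varphi$ automatically lies in $D(A)$). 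By the Fredholm alternative for the compact operator $K_\lambda$, this holds for all $\psi$ precisely when $1\notin\sigma_p(K_\lambda)$, while $1\in\sigma_p(K_\lambda)$ is equivalent to the existence of $\varphi\neq0$ with $(A+F^*)\varphi=\lambda\varphi$. Hence $\sigma(A+F^*)\cap\Sigma=\sigma_p(A+F^*)\cap\Sigma$, which proves the first assertion and reduces the lemma to locating the eigenvalues.

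\emph{Step 3: the characteristic equation.} Let $\varphi=(v,0_{\R^3},u_1,u_2,u_3)\neq0$ satisfy $(A+F^*)\varphi=\lambda\varphi$. The $L^1$-lines force $u_j(a)=u_j(0)e^{-\lambda a}D_j(a)$; the $\R^3$-lines give $u_1(0)=\mathcal{W}^*$, then $u_2(0)=\int_0^\infty\gamma_1(a) u_1(a)\,\d a=\Gamma_1(\lambda)u_1(0)$ and $u_3(0)=\Gamma_2(\lambda)u_2(0)$; and the $S$-line gives $v=-\mathcal{W}^*/(\lambda+\mu)$. Thus $\varphi$ is determined by the single scalar $c:=u_1(0)$, with $\int_0^\infty\sum_j u_j(a)\,\d a=c\,(D_1^\lambda+\Gamma_1(\lambda)D_2^\lambda+\Gamma_1(\lambda)\Gamma_2(\lambda)D_3^\lambda)$ and $\int_0^\infty\beta(a)(u_1+\varepsilon u_2+\delta u_3)(a)\,\d a=c\,(\Omega_1(\lambda)+\varepsilon\Gamma_1(\lambda)\Omega_2(\lambda)+\delta\Gamma_1(\lambda)\Gamma_2(\lambda)\Omega_3(\lambda))$. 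Plugging these, together with $v=-c/(\lambda+\mu)$, into the definition of $\mathcal{W}^*$ yields an identity $c=c\,m(\lambda,\varphi^*)$ with $m(\lambda,\varphi^*)=\frac{1}{(v^*+\overline{A}^*)^2}\bigl(-\frac{\overline{A}^*\overline{B}^*}{\lambda+\mu}+\mathcal{Q}^*(\lambda)\bigr)$, where $\mathcal{Q}^*$ is exactly the quantity in the statement. A nonzero $\varphi$ exists iff $m(\lambda,\varphi^*)=1$; multiplying by $(v^*+\overline{A}^*)^2$ and by $\lambda+\mu$ and rearranging turns this into $\chi(\lambda,\varphi^*)=0$ with $\chi$ as in \eqref{eq-chi}. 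I would also note that on $\Sigma$ the denominator $\overline{A}^*\overline{B}^*+(v^*+\overline{A}^*)^2(\lambda+\mu)$ has real part $\overline{A}^*\overline{B}^*+(v^*+\overline{A}^*)^2(R_e(\lambda)+\mu)>0$ (since $v^*=S^*>0$ at any equilibrium), so $\chi(\cdot,\varphi^*)$ is a well-defined analytic function on $\Sigma$, hence with isolated zeros.

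\emph{Main obstacle.} The functional-analytic part (resolvent of $A$, compactness of $K_\lambda$, the Fredholm reduction) is routine for this class of age-structured models; the only genuinely delicate point is the algebraic bookkeeping of Step~3 — substituting the chained boundary data into $\mathcal{W}^*$ and rearranging into the compact form $\chi=0$ while correctly tracking the factors $v^*+\overline{A}^*$, $\lambda+\mu$, and the products $\Gamma_1(\lambda)\Gamma_2(\lambda)$.
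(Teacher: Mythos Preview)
Your proof is correct and follows the same overall architecture as the paper --- use compactness of $F^*$ to reduce $\sigma(A+F^*)\cap\Sigma$ to point spectrum, then exploit the transport structure to derive the scalar characteristic equation --- but the execution differs in two places worth noting. For the spectral reduction, the paper invokes perturbation results for the essential growth rate of the semigroup generated by $(A+F^*)_0$ (citing \cite{Ducrot2008,Thieme1997,Magal2009b}), whereas you argue directly via the Fredholm alternative applied to the finite-rank operator $K_\lambda=(\lambda I-A)^{-1}F^*$; your route is more elementary and self-contained, at the cost of not yielding information about the essential spectrum beyond $\Sigma$. For the characteristic equation, the paper computes when $[I-F^*(\lambda I-A)^{-1}]$ is invertible by writing out the full resolvent system \eqref{eq-charac-alpha} and isolating $\alpha_2$, while you solve the eigenvalue problem $(A+F^*)\varphi=\lambda\varphi$ directly; the eigenvalue route is shorter because the inhomogeneous terms $g_j(\lambda,a)$ of the paper's computation never appear. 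Both approaches land on the same $\chi(\lambda,\varphi^*)$, and your check that $c=u_1(0)=0$ forces $\varphi=0$ (implicit in ``$\varphi$ is determined by the single scalar $c$'') together with the nonvanishing of the denominator on $\Sigma$ closes the argument cleanly.
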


\begin{proof}
	Let us denote by $A_0$ the part of $A$ in $\mathcal{X}_0$. Then it is the infinitesimal generator of a $C_0$-semigroup on $\mathcal{X}_0$ denoted by $\{T_{A_0}(t) \}_t$. We can easily check that the essential growth rate of this semigroup satisfies $\omega_{0,\text{ess}} (A_0) \le -\mu$. Since operator $F^*$ is compact, results in \cite{Ducrot2008,Thieme1997} apply and ensure that the essential growth rate of $\{T_{(A+F^*)_0}(t) \}_t$, the $C_0$-semigroup generated by the part of $A+F^*$ in $\mathcal{X}_0$ is such that $\omega_{0,\text{ess}} (A+F^*)_0 \le -\mu$. Applying the results in \cite{Magal2009b}, the latter inequality ensures that $\sigma(A+F^*) \cap \Sigma$ is only composed of a point spectrum of $(A+F^*)$. The derivation of the characteristic equation is standard and we refer to \cite{Chu2009,Magal2010}. Indeed, let us first notice that $A$ is a Hille-Yosida operator on $\mathcal{X}$, meaning that $[-\mu, \infty) \subset \sigma(A)$ and for $ z >-\mu$ one has $\| \left(zI - A\right)^{-1} \| \le \left(z+\mu\right)^{-1}$. Therefore, for $\lambda\in \C$ such that $R_e(\lambda)>-\mu$ it is easily checked that $[\lambda I
	-(A+F^*)]^{-1}=(\lambda I -A)^{-1}[I-F^*(\lambda I -A)^{-1}]^{-1}$.
	That is to say
	\[\lambda\in \sigma(A+F^*) \Leftrightarrow 1\in \sigma(F^*(\lambda I
	-A)^{-1}).\] We also have 
	\begin{equation}\label{eq-charac-alpha}
	\begin{split} 
	&[I-F^*(\lambda I -A)^{-1}] \left( \alpha_1, \alpha_2,\alpha_3,\alpha_4, w_1,w_2,w_3 \right)^T =\left( v,0_{\R^3}, u_1,u_2,u_3 \right)^T \Leftrightarrow\\
	&\begin{cases}
	\alpha_1 + \widetilde{\mathcal{W}}^*= v,\\
	\alpha_2 - \widetilde{\mathcal{W}}^*= 0,\\
	\alpha_3 - \int_0^\infty \gamma_1 \tilde w_1(a) \d a=0,\\
	\alpha_4 - \int_0^\infty \gamma_2 \tilde w_2(a) \d a=0,\\
	w_1=u_1; w_2=u_2; w_3=u_3,
	\end{cases}
	\end{split}
	\end{equation}
	where $ \tilde w_j(a)= \alpha_{j+1} D_j(a)e^{-\lambda a} + g_j(\lambda,a) $; $g_j(\lambda,a)= \int_0^a u_j(s) \frac{D_j(a)}{D_j(s)}e^{-\lambda (a-s)}  \d s$, and 
	\[
	\begin{split}
	\widetilde{\mathcal{W}}^*=& \frac{\alpha_1 \overline A^*  \overline B^*}{ \left(v^*+  \overline A^* \right)^2(\lambda +\mu)} +  \frac{1}{ \left(v^*+  \overline A^* \right)^2} \left[  -  v^* \overline B^* \int_0^\infty \sum_{j=1}^3 \tilde w_j(a) \d a \right.\\
	&\left.  +	v^*\left(v^*+  \overline A^* \right) \int_0^\infty \beta(a) \left(\tilde w_1(a)+\varepsilon \tilde w_2(a) +\delta \tilde w_3(a)\right) \d a  \right].
	\end{split}
	\]
	First, from \eqref{eq-charac-alpha}, we have 
	\[
	\begin{split}
	\alpha_3= &\alpha_2 \Gamma_1(\lambda)  +\int_0^\infty \gamma_1 (a) g_1(\lambda,a) \d a,\\
	\alpha_4=& \alpha_2 \Gamma_1(\lambda)  \Gamma_2(\lambda)  +  \Gamma_2(\lambda)\int_0^\infty \gamma_1 (a) g_1(\lambda,a) \d a +\int_0^\infty \gamma_2 (a) g_2(\lambda,a) \d a.
	\end{split}
	\]
	From where
	\[
	\begin{split}
	& \int_0^\infty \sum_{j=1}^3 \tilde w_j(a) \d a= \left( D_1^\lambda + D_2^\lambda \Gamma_1(\lambda) + D_3^\lambda  \Gamma_1(\lambda)  \Gamma_2(\lambda)\right) \alpha_2\\
	& +D_3^\lambda \left(  \Gamma_2(\lambda)\int_0^\infty \gamma_1 (a) g_1(\lambda,a) \d a +\int_0^\infty \gamma_2 (a) g_2(\lambda,a) \d a\right) \\
	& + D_2^\lambda \int_0^\infty \gamma_1 (a) g_1(\lambda,a) \d a  + \int_0^\infty \sum_{j=1}^3  g_j(\lambda,a)\d a,
	\end{split}
	\]
	and 
	\[
	\begin{split}
	&  \int_0^\infty \beta(a) \left(\tilde w_1(a)+\varepsilon \tilde w_2(a) +\delta \tilde w_3(a)\right) \d a = \left( \Omega_1(\lambda) + \varepsilon\Omega_2(\lambda) \Gamma_1(\lambda) + \delta \Omega_3(\lambda)  \Gamma_1(\lambda)  \Gamma_2(\lambda)\right) \alpha_2\\
	& +\delta \Omega_3(\lambda) \left(  \Gamma_2(\lambda)\int_0^\infty \gamma_1 (a) g_1(\lambda,a) \d a +\varepsilon \int_0^\infty \gamma_2 (a) g_2(\lambda,a) \d a\right) \\
	& +\varepsilon \Omega_2(\lambda) \int_0^\infty \gamma_1 (a) g_1(\lambda,a) \d a  +\int_0^\infty \beta(a) \left( g_1(\lambda,a)+\varepsilon  g_2(\lambda,a) +\delta  g_3(\lambda,a)\right) \d a.
	\end{split}
	\]
	From the tow previous equality, $\widetilde{\mathcal{W}}^*$ rewrites 
	\[
	\begin{split}
	\widetilde{\mathcal{W}}^*=& \frac{\alpha_1 \overline A^*  \overline B^*}{ \left(v^*+  \overline A^* \right)^2(\lambda +\mu)} +  \frac{1}{ \left(v^*+  \overline A^* \right)^2} \left[   \mathcal{Q}^* \alpha_2   +R^*(\lambda,u) \right],
	\end{split}
	\]
	with 
	\[
	\begin{split}
	&R^*(\lambda,u)= -  v^* \overline B^*D_3^\lambda \left(  \Gamma_2(\lambda)\int_0^\infty \gamma_1 (a) g_1(\lambda,a) \d a +\int_0^\infty \gamma_2 (a) g_2(\lambda,a) \d a\right) \\
	& -  v^* \overline B^* D_2^\lambda \int_0^\infty \gamma_1 (a) g_1(\lambda,a) \d a  + \int_0^\infty \sum_{j=1}^3  g_j(\lambda,a)\d a\\
	& +	v^*\left(v^*+  \overline A^* \right)\delta \Omega_3(\lambda) \left(  \Gamma_2(\lambda)\int_0^\infty \gamma_1 (a) g_1(\lambda,a) \d a +\varepsilon \int_0^\infty \gamma_2 (a) g_2(\lambda,a) \d a\right) \\
	& +	v^*\left(v^*+  \overline A^* \right)\varepsilon \Omega_2(\lambda) \int_0^\infty \gamma_1 (a) g_1(\lambda,a) \d a  +\int_0^\infty \beta(a) \left( g_1(\lambda,a)+\varepsilon  g_2(\lambda,a) +\delta  g_3(\lambda,a)\right) \d a.
	\end{split}
	\]
	
	From the two first equations of \eqref{eq-charac-alpha}, a straightforward computation gives 
	\[
	\begin{split}
	&\alpha_1 =- \left(1+ \frac{ \overline A^*  \overline B^*}{ \left(v^*+  \overline A^* \right)^2(\lambda +\mu)}\right)^{-1} \frac{\mathcal{Q}^*(\lambda)\alpha_2}{ \left(v^*+  \overline A^* \right)^2}  + G(\lambda,v,u),\\
	&\alpha_2 \left[1 - \frac{\mathcal{Q}^*(\lambda) (\lambda+\mu)}{\overline A^* \overline B^*+ \left(v^*+  \overline A^* \right)^2(\lambda+\mu)} \right] = \frac{R^*(\lambda,u)}{ \left(v^*+  \overline A^* \right)^2} + \frac{\overline A^* \overline B^* G(\lambda,v,u)}{ \left(v^*+  \overline A^* \right)^2(\lambda+\mu)} 
	\end{split}
	\]
	with 
	\[
	\begin{split}
	G(\lambda,v,u)= \left(1+ \frac{ \overline A^*  \overline B^*}{ \left(v^*+  \overline A^* \right)^2(\lambda +\mu)}\right)^{-1} \left(v - \frac{R^*(\lambda,u)}{ \left(v^*+  \overline A^* \right)^2}\right).
	\end{split}
	\]
	
	By setting 
	\[
	\chi(\lambda, \varphi^*)= 1 - \frac{\mathcal{Q}^*(\lambda) (\lambda+\mu)}{\overline A^* \overline B^*+ \left(v^*+  \overline A^* \right)^2(\lambda+\mu)},
	\]
	we can then isolate $\alpha_2$ (and then $\alpha_1, \alpha_3, \alpha_4$) if and
	only if $ \chi(\lambda, \varphi^*)\neq0$.
\end{proof}

The next results relies on some properties of the complete solutions of system \eqref{model1}.

\begin{lemma} \label{lem-E1-positif}
	Set $y_0=(S_0,i_{10},i_{20},i_{30})$ and let $\left\{ y(t)= \left(S(t),i_1(t,\cdot),i_2(t,\cdot),i_3(t,\cdot)\right)
	\right\}_{t\in \R}\subset \mathcal{X}_{0^+} $ a complete solution of \eqref{model1} passing through $y_0$. Then, $S(t)$ is strictly positive for all $t$ and either $E_1$ is identically zero or $E_1(t)$ is positive for all $t$.
\end{lemma}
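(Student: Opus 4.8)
The statement combines two independent facts — strict positivity of $S$ and a ``zero or strictly positive everywhere'' dichotomy for $E_1$ — and I would prove them in that order.

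\emph{Positivity of $S$.} Read the $S$--equation of \eqref{model1} as the scalar linear ODE $\dot S(t)=\Lambda-(\mu+q(t))S(t)$ with $q(t):=E_1(t)/P(t)$. Since the complete orbit stays in $\mathcal X_{0+}$ we have $S\ge0$, hence $P(t)\ge\int_0^\infty(i_1+i_2+i_3)(t,a)\,\d a$ and therefore $0\le E_1(t)\le M\,P(t)$ with $M:=\|\beta\|_\infty\max\{1,\varepsilon,\delta\}$, i.e.\ $0\le q(t)\le M$. Integrating the ODE from an arbitrary $s<t$ with the integrating factor and using $S(s)\ge0$ and $\Lambda>0$ gives
$$S(t)\ \ge\ \int_s^t\Lambda\,e^{-\int_\tau^t(\mu+q(\sigma))\,\d\sigma}\,\d\tau\ \ge\ \frac{\Lambda}{\mu+M}\Bigl(1-e^{-(\mu+M)(t-s)}\Bigr)\ >\ 0 ,$$
so $S(t)>0$ for every $t$; consequently $P(t)\ge S(t)>0$ and $k(t):=S(t)/P(t)\in(0,1]$ for all $t$.

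\emph{Dichotomy for $E_1$.} I would work with the boundary trace $B_1(t):=i_1(t,0)=k(t)E_1(t)$, which is continuous ($k$ is continuous and $E_1(t)=\int_0^\infty\beta(a)(i_1+\varepsilon i_2+\delta i_3)(t,a)\,\d a$ is continuous in $t$ by strong continuity of the orbit), satisfies $B_1\equiv0\iff E_1\equiv0$ and $B_1(t)>0\iff E_1(t)>0$. Along a complete orbit the Volterra formula \eqref{eq-volterra} reduces to its second branch for all $a\ge0$, $t\in\R$, so $i_j(t,a)=D_j(a)B_j(t-a)$ with $B_j(t)=i_j(t,0)$ and $D_j>0$; feeding this into \eqref{eq-E1-E2} yields
$$E_1(t)=\int_0^\infty\beta(a)\Bigl[D_1(a)B_1(t-a)+\varepsilon D_2(a)B_2(t-a)+\delta D_3(a)B_3(t-a)\Bigr]\,\d a ,$$
all three integrands nonnegative, together with $B_2(t)=\int_0^\infty\gamma_1(a)D_1(a)B_1(t-a)\,\d a$ and $B_3(t)=\int_0^\infty\gamma_2(a)D_2(a)B_2(t-a)\,\d a$. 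Now assume, for contradiction, that $E_1(t_1)=0$ for some $t_1$ while $E_1\not\equiv0$. The display forces $\beta(a)B_1(t_1-a)=0$ for a.e.\ $a$; by Assumption \ref{asym1}(3) there is an interval $(p,q)\subset(0,\infty)$ on which $\beta>0$, so $B_1\equiv0$ on $(t_1-q,t_1-p)$ by continuity. Since $B_1=kE_1$ with $k>0$, also $E_1\equiv0$ there, and re-running the same deduction at each point of that interval propagates the zero set of $B_1$ to $t_1-\bigl((p,q)+(p,q)\bigr)=t_1-(2p,2q)$, then to $t_1-(np,nq)$ for every $n$; once $n>p/(q-p)$ the translates $(np,nq)$ overlap, so $E_1\equiv0$ on a whole left half-line $(-\infty,t_0]$.

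The step I expect to require the most care is the last one: showing that $E_1\equiv0$ on a left half-line forces $E_1\equiv0$. If $E_1\equiv0$ on $(-\infty,t_0]$ then $B_1\equiv B_2\equiv B_3\equiv0$ there, so for $t>t_0$ the renewal identity for $E_1$ and those for $B_2,B_3$ only involve $B_1,B_2,B_3$ on $(t_0,t)$; using $k\le1$, $D_j\le1$ and the global bounds $\|\gamma_1\|_\infty,\|\gamma_2\|_\infty$ one gets $\sup_{(t_0,t_0+h)}|B_1|\le Ch\,\sup_{(t_0,t_0+h)}|B_1|$ with a constant $C$ independent of $t_0$, hence $B_1\equiv0$ on $(t_0,t_0+h)$ for $h<1/C$; sliding $t_0$ forward in steps of $h$ yields $E_1\equiv0$, contradicting $E_1\not\equiv0$. (If $\delta=0$ the $i_3$--term simply disappears from the renewal identity and nothing changes.) It then follows that if $E_1\not\equiv0$ it has no zero, i.e.\ $E_1(t)>0$ for all $t$. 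Apart from this Gronwall-type estimate the proof is bookkeeping with the Volterra formula and nonnegative convolutions; Assumption \ref{asym1} enters only through item 3 (the interval on which $\beta>0$, which is what gets the backward propagation started and makes it reach a half-line rather than a scattered set).
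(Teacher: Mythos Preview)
Your proof is correct and rests on the same renewal identity as the paper's (your display for $E_1$ in terms of $B_1,B_2,B_3$ is exactly equation \eqref{eq-18} after the substitution $B_1(t)=k(t)E_1(t)$). The differences are ones of detail and technique, not of strategy. For $S>0$, the paper argues by contradiction: if $S(t_1)=0$ then $\dot S(t_1)=\Lambda>0$, so $S(t_1-\eta)<0$ for small $\eta$, impossible; your integrating-factor bound is a legitimate alternative and in fact gives a uniform lower bound, which is stronger than what the paper records. For the $E_1$ dichotomy, the paper simply states that ``due to Assumption~\ref{asym1}, \eqref{eq-18} gives that either $E_1$ is identically zero or $E_1(t)$ takes on positive values for all $t$'', leaving the mechanism to the reader. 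Your backward-propagation (using a single interval of positivity of $\beta$ from Assumption~\ref{asym1}(3)) followed by the forward Gronwall step supplies exactly the missing mechanism, and the care you flag as the delicate point (propagating $E_1\equiv0$ from a left half-line to all of $\R$) is indeed where the argument needs work; your sketch there is sound. Two minor remarks: Assumption~\ref{asym1}(1) (the $L^\infty$ bounds on $\beta,\gamma_j$) is also used in your Gronwall constant, so item~3 is not the only part of the assumption that enters; and for continuity of $E_1$ you can cite Claim~\ref{claim_lips} directly rather than invoking strong continuity of the orbit.
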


\begin{proof}
	Assume that there exists $t_1 \in \R$ such that $S(t_1)=0$. Then, the $S$-equation of \eqref{model1} gives $\dot S(t_1)>0$ meaning that we can find $\delta>0$ sufficiently small such that $S(t_1-\delta)<0$. A contradiction with the fact that the
	total trajectory $y(t)$ lies in $\mathcal{X}_{0^+}$ in for all $t \in \R$.
	
	Next, let us notice that since $y$ is a complete solution, it comes from the Volterra formulation
	\begin{equation*} \label{eq-voltera-complet}
	\begin{split}
	i_1(t,a)=& \frac{S(t-a)}{P(t-a)} E_1(t-a)D_1(a),\\
	i_j(t,a)=& E_j(t-a)D_j(a), j=2,3; \text{ and } \quad \forall (t,a) \in \R \times [0,\infty).
	\end{split}
	\end{equation*}
	From above formulation, we can observe that $i_j(t,a)=i_j(t-a,0)D_j(a)$, $j=1,2,3$. Moreover, $E_{j+1}(t)= \int_0^\infty \gamma_j(a) D_j(a) E_j(t-a) \d a$, with $j=1,2$. Therefore, a straightforward computation gives
	\begin{equation}\label{eq-18}
	\begin{split}
	&E_1(t)= \int_0^\infty \beta(a)D_1(a) \frac{S(t-a)}{P(t-a)} E_1(t-a) \d a\\
	& + \varepsilon \int_0^\infty \int_0^\infty \beta(a)D_2(a) D_1(\eta) \gamma_1(\eta ) \frac{S(t-a-\eta)}{P(t-a-\eta)} E_1(t-a-\eta) \d \eta \d a\\
	& + \delta \int_0^\infty \int_0^\infty \int_0^\infty \beta(a)D_3(a) D_2(\eta) \gamma_2(\eta )D_1(s)\gamma_1(s) \frac{S(t-a-\eta-s)}{P(t-a-\eta-s)} E_1(t-a-\eta-s)\d s \d \eta \d a.
	\end{split}
	\end{equation}
	
	Then, due to Assumption \ref{asym1}, \eqref{eq-18} gives that either $E_1$ is identically zero or $E_1(t)$ takes on positive values for all $t \in \R$.
\end{proof}

\subsection{Proof of Theorem \ref{Theo-asymptotic} item 1} Let $y_0 \in \mathcal{B}$ and let $\Phi(t,y_0) = \left(S(t),i_1(t,\cdot),i_2(t,\cdot),i_3(t,\cdot)\right)$ a complete solution in $\mathcal{B}$ passing through $y_0$ at $t=0$. By setting $\overline{E}_j = \sup_{t \in \R} E_j(t)$ we successively obtain for all $t \in \R$
\[
\begin{split}
E_2(t) \le & \overline{E}_1  \Gamma_1,\\
E_3(t) \le & \overline{E}_1  \Gamma_1 \Gamma_2,\\
E_1(t) \le & \overline{E}_1   \left( \Omega_1 + \varepsilon \Gamma_1 \Omega_2 +\delta \Gamma_1 \Gamma_2 \Omega_3 \ \right).
\end{split}
\]
Taking the supremum on the left-hand side of the $E_1$ inequality it comes $\overline{E}_1 \le \overline{E}_1 \mathcal{R}_0$. Then since $\overline{E}_1$ is non-negative and $\mathcal{R}_0 < 1$ it follows $\overline{E}_1=0$. Similarly, we also find that $\overline{E}_2=\overline{E}_3=0$. Therefore, the attractor is a compact invariant subset of the space $\R\times\{0\} \times \{0\} \times \{0\}$. Since the only such set is the singleton containing the disease-free equilibrium $E^0$, this ends the proof of the first part of item 1 of Theorem \ref{Theo-asymptotic}. 

It remains to prove that $E^0$ is unstable when $\mathcal{R}_0 > 1$. Notice that the function $\chi(\cdot,E^0)= 1 -  \left[ \Omega_1(\cdot) +\varepsilon \Omega_2(\cdot) \Gamma_1(\cdot) +\delta \Omega_3(\cdot) \Gamma_1(\cdot) \Gamma_2(\cdot)\right] $ provided by \eqref{eq-chi} at $E^0$ is a non-decreasing function on $\R_+$ such that $\chi(0,E^0)=1- \mathcal{R}_0<0$ and $\lim_{\lambda \to +\infty} \chi(\lambda,E^0)>0$. As consequence, there exists strictly positive eigenvalue , {\it i.e.}  $\exists \lambda_0 \in \R^*_+$ such that $\chi(\lambda_0,E^0) =0$. This ends the second part of item 1 of Theorem \ref{Theo-asymptotic}.

\subsection{Proof of Theorem \ref{Theo-asymptotic} item 2(i) and 2(ii)}
Let us start by the proof of item 2(i). By Lemma \ref{lem-spectrum} the spectrum of the linearized semiflow $\{\Phi(t,\cdot)\}_t$ at the endemic equilibrium $E^*$ is characterized by the following equation, with $R_e(\lambda)>-\mu$:
\begin{equation}\label{eq-local-sta1}
\begin{split}
&\left( \Omega_1(\lambda) + \varepsilon\Omega_2(\lambda) \Gamma_1(\lambda) + \delta \Omega_3(\lambda)  \Gamma_1(\lambda)  \Gamma_2(\lambda)\right)= 1+ \frac{ \overline A^*}{S^*}+ \frac{ \overline A^*  \overline B^*}{S^*\left(S^*+  \overline A^* \right)}\\
& + \frac{ \overline B^* \left( D_1^\lambda + D_2^\lambda \Gamma_1(\lambda) + D_3^\lambda  \Gamma_1(\lambda)  \Gamma_2(\lambda)\right)}{S^*+  \overline A^*} .
\end{split}
\end{equation}
Since $\overline A^*/S^*= R_0-1$, we obtain from the right hand side of \eqref{eq-local-sta1}
\begin{equation}\label{eq-local-sta2}
\left|\text{RHS of } \eqref{eq-local-sta1} \right|> R_0, \quad \forall \lambda \in \C: R_e(\lambda)>-\mu.
\end{equation}
Now by contradiction let us assume that there exists $\lambda_0 \in \C$ such that $R_e(\lambda_0)> 0$. Then, the left hand side of \eqref{eq-local-sta1} gives 
\begin{equation}\label{eq-local-sta3}
\left|\text{LHS of } \eqref{eq-local-sta1} \right|< \Omega_1(0) + \varepsilon\Omega_2(0) \Gamma_1(0) + \delta \Omega_3(0)  \Gamma_1(0)  \Gamma_2(0)= R_0.
\end{equation}
A contradiction holds from \eqref{eq-local-sta2} and \eqref{eq-local-sta3}. This ends the proof of Theorem \ref{Theo-asymptotic} item 2(i). 

Next, we deal with the proof of Theorem \ref{Theo-asymptotic} item 2(ii). A trivial solution of \eqref{model1} satisfying these initial conditions is given by $i_1(t,\cdot)=i_2(t,\cdot)=i_3(t,\cdot)=0$ where $S$ is such that $\dot S= \Lambda -\mu S$. This trivial solution tends exponentially to $E^0$. Since solutions to the initial value problem are unique, statement 2(ii) of the theorem follows.

\subsection{Proof of Theorem \ref{Theo-asymptotic} item 2(iii)}
We assume that the support of at least one of $i_{j0}$'s  has positive measure, and therefore $E_1(t)=\int_0^\infty \beta(a) \left(i_1(t,a) +\varepsilon i_2(t,a)+ \delta i_3(t,a) \right) \d a$  takes on positive values for arbitrarily large values of $t$ (by Lemma \ref{lem-E1-positif}). Furthermore, Claim \ref{claim_lips} gives that $E_1$ is Lipschitz, it follows that $E_1$ is positive on a set of positive measure. In the sequel, when that exists, we set for a given function $h$: $h^\infty= \limsup_{t \to \infty} h(t)$ and $h_\infty= \liminf_{t \to \infty} h(t)$. 

For $\eta_0>0$, there exists $t_1 \in \R$ such that $E_1(t) \le E_1^\infty + \frac{\eta_0}{2}$ for all $t \ge t_1$. Then, it follows from the $S$-equation of \eqref{model1} that $S_\infty \ge \Lambda \left(\mu +\frac{ E_1^\infty}{P_\infty} +  \frac{\eta_0}{2} \right)^{-1}$. Thus, there exists $t_2\ge t_1$ such that 
\begin{equation}\label{eq-19}
S(t) \ge \Lambda \left(\mu + \frac{ E_1^\infty}{P_\infty} +  \frac{\eta_0}{2} \right)^{-1}, \quad \forall t\ge t_2.
\end{equation}
We perform a time-shift of $t_2$ on the solution being studied, {\it i.e.} we replace the initial condition $y_0$ with $y_1 = \Phi(t_2,y_0)$. The solution passing
through $y_1$ satisfies equations  \eqref{eq-18} and \eqref{eq-19} for all $t$, and the bounds $E_1^\infty$ and $P_\infty>0$ remain valid. Note that \eqref{eq-18} rewrites
\begin{equation*}
\begin{split}
&E_1(t)= \int_0^\infty \beta(a)D_1(a) \frac{S(t-a)}{P(t-a)} E_1(t-a) \d a\\
& + \varepsilon \int_0^\infty \frac{S(t-\tau)}{P(t-\tau)} E_1(t-\tau) \int_0^\tau \beta(a)D_2(a) D_1(\tau-a) \gamma_1(\tau-a )  \d a \d \tau\\
& + \delta \int_0^\infty \frac{S(t-\tau)}{P(t-\tau)} E_1(t-\tau) \int_0^\tau \int_0^{\tau-a} \beta(a)D_3(a) D_2(\eta) \gamma_2(\eta )D_1(\tau-a-\eta)\gamma_1(\tau-a-\eta)  \d \eta \d a \d \tau.
\end{split}
\end{equation*}
From where
\begin{equation}\label{eq-E1-conv}
E_1(t) \ge K \int_0^t u(\tau)   E_1(t-\tau) \d \tau,
\end{equation}

wherein $K=\mu \left(\mu + \frac{E_1^\infty}{P_\infty} +  \frac{\eta_0}{2} \right)^{-1}$ and $u(\tau)=\beta(\tau)D_1(\tau) + \varepsilon \int_0^\tau \beta(a)D_2(a) D_1(\tau-a) \gamma_1(\tau-a ) \d a + \delta \int_0^\tau \int_0^{\tau-a} \beta(a)D_3(a) D_2(\eta) \gamma_2(\eta )D_1(\tau-a-\eta)\gamma_1(\tau-a-\eta)  \d \eta \d a$. Taking the Laplace transform of each side of inequality \eqref{eq-E1-conv} converts the convolution to a product and we obtain for $\lambda \in \C$
\[
K \widehat{u}(\lambda) \widehat{E_1} (\lambda) \le  \widehat{E_1} (\lambda).
\]
Since $E_1$ is positive on a set of positive measure then, $\widehat{E_1}$ is strictly positive and the last inequality gives
\begin{equation}\label{eq-E1-conv2}
\begin{split}
&K \int_0^\infty e^{-\lambda \tau } \left( \beta(\tau)D_1(\tau) + \varepsilon \int_0^\tau \beta(a)D_2(a) D_1(\tau-a) \gamma_1(\tau-a ) \d a \right.\\
&\left. + \delta \int_0^\tau \int_0^{\tau-a} \beta(a)D_3(a) D_2(\eta) \gamma_2(\eta )D_1(\tau-a-\eta)\gamma_1(\tau-a-\eta)  \d \eta \d a \right) \d \tau \le 1.
\end{split}
\end{equation}
Changing the order of integration from the left hand side of \eqref{eq-E1-conv2}, it comes successively 
\[
\begin{split}
\text{LHS of \eqref{eq-E1-conv2}}=& K \int_0^\infty e^{-\lambda \tau }  \beta(\tau)D_1(\tau)\d\tau + K \varepsilon \int_0^\infty \int_0^\infty e^{-\lambda (a+\sigma) }  \beta(a)D_2(a) D_1(\sigma) \gamma_1(\sigma) \d \sigma  \d a \\
&+K\delta \int_0^\infty \int_0^\infty \int_0^\infty  e^{-\lambda (\tau+a+\eta) } \beta(a)D_3(a) D_2(\eta) \gamma_2(\eta )D_1(\tau)\gamma_1(\tau)  \d \tau \d \eta  \d a.
\end{split}
\]
From where \eqref{eq-E1-conv2} rewrites 
\[
\begin{split}
& K \int_0^\infty e^{-\lambda \tau }  \beta(\tau)D_1(\tau)\d\tau + K \varepsilon \int_0^\infty \int_0^\infty e^{-\lambda (a+\sigma) }  \beta(a)D_2(a) D_1(\sigma) \gamma_1(\sigma) \d \sigma  \d a \\
&+K\delta \int_0^\infty \int_0^\infty \int_0^\infty  e^{-\lambda (\tau+a+\eta) } \beta(a)D_3(a) D_2(\eta) \gamma_2(\eta )D_1(\tau)\gamma_1(\tau)  \d \tau \d \eta  \d a \le 1.
\end{split}
\]
Taking limits as $\eta_0$ and $\lambda$ tend to zero in the previous inequality, it comes 
\[
\frac{\mu}{\mu + \frac{E_1^\infty}{P_\infty}} R_0 \le 1 \quad \text{\it i.e. } \frac{E_1^\infty}{P_\infty} \ge \mu \left( \mathcal{R}_0 -1\right).
\]
Then, we have the following.
\begin{proposition}\label{prop-weak-persi}
	If $\mathcal{R}_0>1$, then the semiflow $\{\Phi(t,y_0) \}_{t}$ is uniformly weakly persistent in the sense that there exists $\nu_0>0$ such that 
	\begin{equation*}
	\limsup_ {t \to \infty} \int_0^\infty \beta(a) \left(i_1(t,a) +\varepsilon i_2(t,a)+\delta i_3(t,a) \right) \d a \ge \nu_0.
	\end{equation*}
\end{proposition}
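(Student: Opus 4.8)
I would prove Proposition~\ref{prop-weak-persi} by converting the scale-invariant estimate $E_1^\infty/P_\infty \ge \mu(\mathcal{R}_0-1)$ obtained in the lines preceding the statement into an absolute lower bound on $E_1^\infty$, uniform over all initial data $y_0 \in \mathcal{X}_{0+}$ with $i_{10}+i_{20}+i_{30}>0$. Two points have to be checked to make that derivation legitimate along a forward semi-orbit $\{\Phi(t,y_0)\}_{t\ge 0}$ (rather than along a complete solution as in Lemma~\ref{lem-E1-positif}): first, that $E_1$ does not vanish identically on any half-line $[T,\infty)$, so that its Laplace transform is finite and strictly positive and can be cancelled when passing from \eqref{eq-E1-conv} to \eqref{eq-E1-conv2}; second, that $P_\infty$ is bounded below by a positive constant depending only on the model parameters.

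For the first point I would rely on the same positivity mechanism as in Lemma~\ref{lem-E1-positif}, read on the forward orbit: if $i_{j0}$ has support of positive measure, the Volterra formula \eqref{eq-volterra} transports that mass to larger ages, $i_j(t,a)=i_{j0}(a-t)D_j(a)/D_j(a-t)$ for $t<a$, and by Assumption~\ref{asym1}(3) it eventually overlaps a neighbourhood of some $a(\beta)$ (and, when $j\ge 2$, first a neighbourhood of $a(\gamma_{j-1})$, and so on), forcing $\int_0^\infty\beta(a)i_j(t,a)\,\d a>0$ for $t$ in a nonempty open set; since $E_1$ is continuous (Claim~\ref{claim_lips}) this makes $E_1$ positive on a set of positive measure, arbitrarily far out in time, so the time-shift by $t_2$ performed above does not destroy positivity. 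Hence $\widehat{E_1}(\lambda)=\int_0^\infty e^{-\lambda\tau}E_1(\tau)\,\d\tau\in(0,\infty)$ for every $\lambda>0$, finiteness coming from the boundedness of $E_1$. The renewal inequality \eqref{eq-E1-conv}, whose convolution structure survives the finite upper limit $t$ by Fubini's theorem, then yields $K\widehat{u}(\lambda)\le 1$; letting $\lambda\downarrow 0$ by monotone convergence and then $\eta_0\downarrow 0$ reproduces $E_1^\infty/P_\infty\ge\mu(\mathcal{R}_0-1)$.

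For the second point I would use that the per capita force of infection is globally bounded: since $E_1(t)=\int_0^\infty\beta(a)\bigl(i_1+\varepsilon i_2+\delta i_3\bigr)(t,a)\,\d a\le \|\beta\|_{\infty}\max\{1,\varepsilon,\delta\}\,P(t)=:C\,P(t)$, the $S$-equation of \eqref{model1} gives $\dot S(t)\ge \Lambda-(\mu+C)S(t)$, whence by comparison $S_\infty\ge \Lambda/(\mu+C)$, and therefore $P_\infty\ge S_\infty\ge \Lambda/(\mu+C)$, a bound independent of $y_0$. Combining the two points, $E_1^\infty\ge \mu(\mathcal{R}_0-1)\,\Lambda/(\mu+C)=:\nu_0>0$ whenever $\mathcal{R}_0>1$, and $\nu_0$ depends only on the model parameters; this is exactly the uniform weak persistence asserted.

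The one genuinely delicate step is the positivity propagation used above: the hypothesis $i_{10}+i_{20}+i_{30}>0$ does not by itself guarantee $E_1(0)>0$, since the support of the initial infectives may lie where $\beta\equiv 0$, so one must invoke Assumption~\ref{asym1}(3) to see that this mass is carried into a region where $\beta$ (and the progression rates) are positive before the time-shift is performed; everything else is the bookkeeping already carried out in the lines leading to the statement.
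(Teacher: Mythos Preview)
Your proposal is correct and follows essentially the same route as the paper: the derivation leading up to the proposition in Section~\ref{sec-proof-thm24} already establishes the renewal inequality \eqref{eq-E1-conv}, takes Laplace transforms, and arrives at $E_1^\infty/P_\infty\ge\mu(\mathcal{R}_0-1)$, after which the proposition is simply stated. You have in fact made explicit two points that the paper leaves implicit or glosses over: first, the positivity of $E_1$ along a \emph{forward} orbit (the paper invokes Lemma~\ref{lem-E1-positif}, which is stated only for complete orbits), and second, the conversion of the ratio bound into the absolute bound $E_1^\infty\ge\nu_0$ via the uniform lower bound $P_\infty\ge S_\infty\ge\Lambda/(\mu+\|\beta\|_\infty\max\{1,\varepsilon,\delta\})$---a step the paper omits entirely, stopping at the inequality for $E_1^\infty/P_\infty$.
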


Therefore, the uniform persistence of the semiflow follows from the continuity of $E_1$ (see Claim \ref{claim_lips}), Theorem \ref{thm-semiflow} statement 4, Lemma \ref{lem-E1-positif}, the uniform weakly persistence of the semiflow (see Proposition \ref{prop-weak-persi}) and  Theorem 5.2 in \cite{Smith2011}. This ends the proof of Theorem \ref{Theo-asymptotic} item 2(iii).

\section{Existence of an optimal control} \label{sec-control-existence}

By setting $\mathcal{V}=\{1,1.TF,2,2.TF,3,1.TS,2.TS \} $, let us consider a control $h\in \mathcal{U}$ and denote by $w^h(t,a)= \left(S^h(t), (i_j^h(t,a) )_{j\in \mathcal{V}}  \right)$, (resp. $\lambda^h= \left(\lambda_S^h(t), (\lambda_j^h(t,a) )_{j\in \mathcal{V}} \right)$), the corresponding state, (resp. adjoint), vector variable given by \eqref{model2-compact} and \eqref{adj}-\eqref{adj-cond}. Let us define the map $\mathcal{L}:L^1((0,T_f), \R^3)
\longrightarrow L^\infty((0,T_f),\R^3)$ by $\mathcal{L}(u_1,u_2,u_{3})=(\mathcal{L}_1u_1, \mathcal{L}_2u_2,  \mathcal{L}_{2.TF} u_3)$, where
$$
\mathcal{L}_j u=\left\{\begin{array}{lcll}
0, & \text{ if } &u<0,& \\
u, & \text{ if }& 0\le u<h_j^{\max},& \\
h_j^{\max}, & \text{ if } & u\ge h_j^{\max},& j \in \{1;2;2.TF\}.
\end{array}\right.
$$

By setting ${\mathcal{X}}:=(0,T_f)\times \left(Q_{T_f}\right) ^7$ with $Q_{T_f}=(0,T_f)\times(0,\infty),$ we define the norms $\|\cdot\|_{L^1({\mathcal{X}})}$ and $\|\cdot\|_{L^\infty({\mathcal{X}})}$ such that for a given vector function $(y,x):=\left(y,(x_j)_{j=1,\ldots,7}\right)$, 

\[
\begin{split}
& \|(y,x)\|_{L^1({\mathcal{X}})}=\int_0^{T_f} |y(t)| \d t + \sum_{j=1}^{7} \int_0^{T_f} \int_0^{\infty}  |x_j(t, a)| \d t\d a,\\
& \|(y,x)\|_{L^\infty ({\mathcal{X}})}= \sup_{t \in [0,T_f]}   |y(t)|  + \sum_{j=1}^{7}  \sup_{t \in [0,T_f]}  \int_0^{\infty}  |x_j(t, a)| \d a.
\end{split}
\]

In the same way, define the norms $\|.\|_{L^1(Q_{T_f})}$ and $\|.\|_{L^\infty(Q_{T_f})}$.
We embed our optimal problem in the space $L^1(0,T_f)$  by defining the function
\begin{equation*}
\mathcal{J}(h)=\left\{\begin{array}{clc}
J(h), & \text{if} \;\;h\in\mathcal{U}, \\
+\infty, & \text{if} \;\;h\notin\mathcal{U}.
\end{array}\right.
\end{equation*}

To prove the existence of the optimal control, let us introduce the first preliminary result.

\begin{lemma}\label{lem-control-existence}Let $T_f$ be sufficiently
	small.\\ 1. The map $h\in \mathcal{U} \to w^h \in
	L^1(\mathcal{X})$ is Lipschitz for the norms $\|\cdot \|_{L^1}$ and $\|\cdot \|_{L^\infty}$  in the following ways:
	\begin{equation*}
	\|w^{h}-w^{v}\|_{L^1(\mathcal{X})} \le T_f C \|{h}-{v}\|_{L^1(0,T_f)} \text{ and }
	\|w^{h}-w^{f}\|_{L^\infty(\mathcal{X})} \le T_f C \|{h}-{f}\|_{L^\infty(0,T_f)}
	\end{equation*}
	for all $h,f\in \mathcal{U}$ and where $C>0$ is a constant.\\
	2. For $h\in \mathcal{U}$, the adjoint system
	\eqref{adj}-\eqref{adj-cond} has a weak solution $\lambda^h$ in
	$L^\infty(\mathcal{X})$ such that
	$$
	||\lambda^{h}-\lambda^{f}||_{L^\infty(\mathcal{X})}\le T_f C
	||{h}-{f}||_{L^\infty(0,T_f)},
	$$
	for all $h,f\in \mathcal{U}$ and $C>0$.\\ 3. The functional $\mathcal{J}(h)$ is lower semicontinuous with respect to $L^1(0,T_f)$ convergence.
\end{lemma}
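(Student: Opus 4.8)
The plan is to treat the controlled state system \eqref{model2}--\eqref{model2-cond} by the method of characteristics, reducing it to a closed system of Volterra integral equations for a finite-dimensional vector of aggregated quantities, and to close all three parts by Gronwall-type estimates; the backward adjoint system \eqref{adj}--\eqref{adj-cond} will be handled by the same device run backward in time, and the lower semicontinuity of $\mathcal{J}$ will follow from continuity of the control-to-state map together with the $L^1$-closedness of $\mathcal{U}$. A preliminary step records two $h$-uniform bounds on $[0,T_f]$: summing all equations of \eqref{model2}--\eqref{model2-cond} and using that the transition boundary terms conserve mass gives $\dot P^h\le\Lambda-\mu P^h$, hence $P^h\le\overline P:=\max\{\Lambda/\mu,P^h(0)\}$ and in particular $E_1^h/P^h\le M_0:=\|\beta\|_\infty\max\{1,\varepsilon,\delta\}$; while $\dot S^h\ge-(\mu+M_0)S^h$ with $S^h(0)=S_0>0$ yields $P^h\ge S^h\ge S_0e^{-(\mu+M_0)T_f}=:\underline P>0$. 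Thus for $T_f\le T^\ast$ fixed all state variables stay in one bounded set on which $P^h$ is bounded away from $0$, so $g_S$ and $\phi$ (polynomial in their arguments, divided by $P^h$) are Lipschitz with constant depending only on the model parameters and $T^\ast$.

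For part~1, I would integrate the transport equations of \eqref{model2} along characteristics: for $t\ge a$, $i_j^h(t,a)=\Pi_j(a)\,i_j^h(t-a,0)$ with $\Pi_j$ the survival kernel of the relevant death/progression rates, and for $t<a$ the value is the ($h$-independent) initial-data contribution. Substituting into $\phi$ and into the definitions of $E_1^h,E_2^h,E_{2,TF}^h,E_3^h,P^h$, the vector $\mathcal{E}^h(t):=(S^h,E_1^h,E_2^h,E_{2,TF}^h,E_3^h,P^h)(t)$ solves a closed Volterra system with $L^1\cap L^\infty$ kernels forced by $h$; subtracting the systems for $h$ and $v$ and using the Lipschitz bounds above gives $|\mathcal{E}^h(t)-\mathcal{E}^v(t)|\le\kappa\int_0^t\big(|h-v|+|\mathcal{E}^h-\mathcal{E}^v|\big)(s)\,\d s$, so Gronwall yields $\sup_{[0,T_f]}|\mathcal{E}^h-\mathcal{E}^v|\le C\|h-v\|_{L^1(0,T_f)}$ (the constant uniform over $T_f\le T^\ast$). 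Feeding this back into the characteristics formula, $\int_0^\infty|i_j^h(t,a)-i_j^v(t,a)|\,\d a\le\int_0^t|i_j^h(s,0)-i_j^v(s,0)|\,\d s\le C'\|h-v\|_{L^1(0,T_f)}$ and $|S^h(t)-S^v(t)|\le C'\|h-v\|_{L^1(0,T_f)}$ uniformly in $t\in[0,T_f]$ (using $S^h(0)=S^v(0)$); integrating these over $t\in(0,T_f)$ produces the extra factor $T_f$ and gives the first inequality of part~1, while inserting $\|h-v\|_{L^1(0,T_f)}\le T_f\|h-v\|_{L^\infty(0,T_f)}$ into the pointwise-in-$t$ bounds gives the second.

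Part~2 is a backward, linear rerun of part~1. The adjoint system \eqref{adj}--\eqref{adj-cond} is linear in $(\lambda_S,\xi,\zeta_\bullet)$ with terminal data $0$, its coefficients and forcing being built from $h$ and the now-known state $w^h$ (still uniformly bounded with $P^h\ge\underline P$); integrating along characteristics backward from $t=T_f$ produces a weak solution $\lambda^h\in L^\infty(\mathcal{X})$ with the analogous a priori bounds. Subtracting the adjoint systems for $h$ and $f$, the difference solves a linear backward system whose forcing involves $(h-f)$ directly and $(w^h-w^f)$, the latter controlled by part~1; a backward Gronwall estimate and the same ``$\int_t^{T_f}$, then integrate over $(0,T_f)$'' bookkeeping give $\|\lambda^h-\lambda^f\|_{L^\infty(\mathcal{X})}\le T_fC\|h-f\|_{L^\infty(0,T_f)}$.

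For part~3, let $h^n\to h$ in $L^1(0,T_f)$. Since $\mathcal{U}$ is cut out by the a.e.\ inequalities $0\le h_j\le h_j^{\max}$ and $L^1$-convergence forces a.e.\ convergence of a subsequence, $\mathcal{U}$ is $L^1$-closed, so if $h\notin\mathcal{U}$ only finitely many $h^n$ lie in $\mathcal{U}$ and $\liminf_n\mathcal{J}(h^n)=+\infty=\mathcal{J}(h)$; if $h\in\mathcal{U}$, discarding the $+\infty$-valued indices we may assume $h^n\in\mathcal{U}$ for all $n$, and by part~1 $w^{h^n}\to w^h$ in $L^1(\mathcal{X})$, so $i_2^{h^n}\to i_2^h$ and $i_{2,TF}^{h^n}\to i_{2,TF}^h$ in $L^1(Q_{T_f})$ and, since $B,\gamma_2,\gamma_{2,TF}\in L^\infty$, the first integral of $J$ converges; moreover $\{h^n\}$ is $L^\infty$-bounded, so $\|h^n-h\|_{L^2}^2\le\|h^n-h\|_{L^\infty}\|h^n-h\|_{L^1}\to0$ and the quadratic cost converges too, whence $J$ is $L^1$-continuous on $\mathcal{U}$ and $\mathcal{J}$ is lower semicontinuous. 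The main obstacle is part~1: one must set up the Volterra reduction so that all kernels are genuinely controlled, keep $P^h\ge\underline P$ in force throughout, and arrange the Gronwall bookkeeping so the Lipschitz constants come out proportional to $T_f$ (which is where the standing smallness of $T_f$ enters, keeping the Gronwall exponentials uniform and, later, the associated fixed-point map a contraction). Parts~2 and~3 then follow with little extra work given part~1.
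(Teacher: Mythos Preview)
Your proposal is correct and follows essentially the same route as the paper: Volterra/characteristics representation of the state (and, backward, the adjoint) system, uniform $h$-independent bounds including a positive lower bound on $P^h$, subtraction plus a Gronwall-type closure to obtain the Lipschitz estimates with the factor $T_f$, and lower semicontinuity of $\mathcal{J}$ via the control-to-state continuity from part~1. The only minor deviation is in part~3, where you pass to the quadratic cost by the interpolation $\|h^n-h\|_{L^2}^2\le\|h^n-h\|_{L^\infty}\|h^n-h\|_{L^1}$, whereas the paper extracts an a.e.-convergent subsequence and applies dominated convergence; both arguments work.
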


\begin{proof} Recalling $
E_1(t)= \int_0^\infty \beta(a) \left[i_1+i_{1,TF} +\varepsilon( i_2+i_{2,TF}) +\delta i_3\right] (t,a) \d a$, and since the total population $P^\cdot$ is bounded by positive constants, we easily find a constant $C_0>0$ such that 
	\[
	\begin{split}
	& \left|\frac{E_1^{h}(t)}{P^h(t)} - \frac{E_1^{f}(t)}{P^f(t)} \right| \le C_0 \|\beta \|_\infty  \sum_{v \in \{i_1,i_{1,TF}, i_2,i_{2,TF},i_3\}} \|v^h(t,\cdot) - v^f(t,\cdot) \|_ {L^1(0,\infty)}.
	\end{split}
	\]
	Moreover, by using the same arguments as in item 2. of Theorem \ref{thm-semiflow}, we can find a positive constant $M_0$ such that $|S^h(t)| + \sum_{v \in \{i_1,i_{1,TF},i_2,i_{2,TF},i_3\}} \|v^h(t,\cdot) \|_ {L^1(0,\infty)} \le M_0 $ for all $h \in \mathcal{U}$ and a.e. $t$. 
	
	Then, integrating the $S$-component of \eqref{model2} it comes
	\begin{equation}\label{lip-S}
	\begin{split}
	&|S^h(t)- S^f(t) | \le  \|\beta \|_\infty C_0 M_0 \int_0^t  e^{-\mu \tau}    |S^h(\tau) - S^f(\tau)|  \d \tau \\
	&+ \|\beta \|_\infty C_0 M_0  \int_0^t e^{-\mu \tau}   \sum_{v \in \{i_1,i_{1,TF},i_2,i_{2,TF},i_3\}} \|v^h(\tau,\cdot) - v^f(\tau,\cdot) \|_ {L^1(0,\infty)}  \d \tau.
	\end{split}
	\end{equation}
	
	Moreover, the Volterra integral formulation of system
	\eqref{model2-compact} gives for $i_1$-equation 
	\begin{equation*}
	i_1^h(t,a)= \begin{cases}
	i_{10}(a-t) \frac{D_1(a)}{D_1(a-t)}, \quad \text{ for } t<a,\\
	\left(1- h_1(t-a)\right)\frac{ S^h(t-a)}{P^h(t-a)} E_1^{h}(t-a) D_1(a), \quad \text{ for } t\ge a
	\end{cases}
	\end{equation*}
	
	From where,
	\begin{equation}\label{lip-i1}
	\begin{split}
	&\|i_1^h(t,\cdot) - i_1^f(t,\cdot) \|_{L^1(0,\infty)} \le M_0^2 \|\beta\|_\infty\| h_1- f_1\|_{L^1(0,T_f)} \\ 
	& + C_0M_0 T_f \|\beta \|_\infty  \left( \sum_{v \in \{i_1,i_{1,TF},i_2,i_{2,TF},i_3\}} \|v^h(t,\cdot) - v^f(t,\cdot) \|_ {L^1(0,\infty)} + |S^h(t)- S^f(t) |\right) .
	\end{split}
	\end{equation}

	Therefore, applying same arguments for $i_{1,TF}$, $i_2$, $i_{2,TF}$ and $i_3$ as for estimates \eqref{lip-i1} and combining with \eqref{lip-S}, it follows that for $T_f$ sufficiently small,
	$$
	||w^{h}-w^{v}||_{L^1(\mathcal{X})}\le T_f C
	||{h}-{v}||_{L^1(0,T_f)}.
	$$
	The same arguments is then apply for the norm $L^\infty$ and for item 2. It remains to prove item 3.
	
	We suppose that $h_n:=(h_{1n},h_{2n},h_{2n,TF})\to
	h:=(h_1,h_2,h_{2,TF})$ in $L^1(0,T_f)$. Possibly along a
	subsequence and using the same notation, $h_n^2\to h^2$ a.e. on
	$(0,T_f)$ by (see \cite{Evans1992}, p.21). By Lebesgue's
	dominated convergence theorem, it comes $\displaystyle\lim_{n\to \infty}
	||h_n^2||_{L^1(0,T_f)}= ||h^2||_{L^1(0,T_f)}$. We have
	the similar arguments for $||f^2||_{L^1(0,T_f)}$. These
	handle the convergence of the squared terms in the functional.
	
	Next, we illustrate the convergence of one term in the functional,
	\begin{equation*}
	\begin{array}{rl}
	||B\gamma_2 (i_2^{h_n}- i_2^h)||_{L^1(Q_{T_f})} \le &  ||B||_\infty ||\gamma_2||_\infty 
	||w^{h_n}-w^h|| _{L^1(\mathcal{X})}\\
	\le & C T_f ||{h_n}-h|| _{L^1(0,T_f)}.
	\end{array}
	\end{equation*}
	Therefore,
	$$
	\left|\mathcal{J}(h_n)-\mathcal{J}(h)\right| \le C T_f
	||{h_n}-h|| _{L^1(0,T_f)}.
	$$
	From where we deduce the lower semi-continuity, $\mathcal{J}(h)\le
	\displaystyle\liminf_{n\to \infty} \mathcal{J}(h_n)$.
\end{proof}

The functional $\mathcal{J}:(0,T_f) \to (-\infty,\infty]$ is lower semi-continuous with respect to strong
$L^1$ convergence but not with respect to weak $L^1$ convergence.
Thus, in general it does not attain its infimum on $(0,T_f)$.
Thus we circumvent this situation by using the Ekerland's
variational principle (see \cite{Ekeland1974}): for
$\delta>0$, there exists $h_\delta$ in
$L^1(0,T_f)$ such that
\begin{eqnarray}
\mathcal{J}(h_\delta) &\le & \inf _{h\in \mathcal{U}}
\mathcal{J}(h)+\delta,\label{eq-20}\\
\mathcal{J}(h_\delta)&=& \min _{h\in \mathcal{U}}\left\{
\mathcal{J}(h)+
\sqrt{\delta}||h_\delta-h||_{L^1(0,T_f)}\right\}.
\label{eq-21}
\end{eqnarray}
Note that, by \eqref{eq-21}, the perturbed functional
$$
\mathcal{J}_\delta(h)= \mathcal{J}(h)+
\sqrt{\delta}||h_\delta-h||_{L^1(0,T_f)}
$$
attains its infimum at $h_\delta$. By the same argument as in
Section \ref{sec-optimality-condition}, and using the projection
map $\mathcal{L}$ on $\mathcal{U}$, it comes that

\begin{lemma}\label{lem-control-existence1}
	If $h_\delta$ is an optimal control minimizing the functional $\mathcal{J}_\delta(h), $ then
	\[
	h_\delta=\mathcal{L}\Big(\hat{h}_{1}(\lambda^{h_\delta})+\frac{\sqrt{\delta}\pi_1^{\delta}}{2C_1}, 
	\hat{h}_{2}(\lambda^{h_\delta})+\frac{\sqrt{\delta}\pi_2^{\delta}}{2C_2},
	\hat{h}_{2,TF}(\lambda^{h_\delta})+\frac{\sqrt{\delta}\pi_3^{\delta}}{2C_3}\Big);
	\]
	where  $\pi_j^\delta \in L^\infty(0,T_f), $ with $|\pi_j^{\delta}(\cdot)|\le 1$ and 
	\[
	\begin{split}
	\hat{h}_1(\lambda^{h_\delta})=&\displaystyle\frac{\left(\lambda_{i_1}(t,0)- p_1 \lambda_{i_{1,TF}}(t,0) -(1-p_1) \lambda_{i_{1,TS}}(t,0) \right)}{2C_1}\frac{SE_1^{h_\delta}}{P^{h_\delta}},\\
	\hat{h}_2(\lambda^{h_\delta})=&\displaystyle\frac{\left(\lambda_{i_2}(t,0)- p_2 \lambda_{i_{2,TF}}(t,0)-(1-p_2)\lambda_{i_{2,TS}}(t,0) \right)}{2C_2} E_{2}^{h_\delta},\\
	\hat{h}_{2,TF}(\lambda^{h_\delta})=&\displaystyle \frac{\left(\lambda_{i_2}(t,0)- p_{2,TF} \lambda_{i_{2,TF}}(t,0)- (1-p_{2,TF}) \lambda_{i_{2,TS}}(t,0) \right) }{2C_3} E_{2,TF}^{h_\delta}.
	\end{split}
	\]
\end{lemma}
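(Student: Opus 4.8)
The plan is to obtain the stated formula from the first-order necessary condition satisfied by the minimizer $h_\delta$ of the penalized functional $\mathcal{J}_\delta$, proceeding exactly as in the derivation of the necessary optimality condition of Section \ref{sec-optimality-condition} but carrying along the extra nonsmooth term $\sqrt{\delta}\,\|h_\delta-h\|_{L^1(0,T_f)}$.

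First I would fix $h_\delta\in\mathcal{U}$ realizing $\min\mathcal{J}_\delta$ (note $\mathcal{J}_\delta(h_\delta)=J(h_\delta)$ since $h_\delta\in\mathcal{U}$). For an arbitrary $h\in\mathcal{U}$ and $\theta\in(0,1)$, convexity of $\mathcal{U}$ makes $h^\theta:=h_\delta+\theta(h-h_\delta)\in\mathcal{U}$ admissible; writing $\mathcal{J}_\delta(h^\theta)\ge\mathcal{J}_\delta(h_\delta)$, using $\sqrt{\delta}\,\|h^\theta-h_\delta\|_{L^1}=\theta\sqrt{\delta}\,\|h-h_\delta\|_{L^1}$, dividing by $\theta$ and letting $\theta\to0^+$ yields
\[
\mathrm{d}J(h_\delta;h-h_\delta)+\sqrt{\delta}\,\|h-h_\delta\|_{L^1(0,T_f)}\ge0 ,
\]
where $\mathrm{d}J(h_\delta;\cdot)$ is the one-sided Gâteaux derivative of $J$ at $h_\delta$. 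Its existence, and the differentiability of $h\mapsto w^h$ in the relevant directions, follow from the Lipschitz estimates of Lemma \ref{lem-control-existence} together with a linearization of \eqref{model2-compact} around $w^{h_\delta}$: the force of infection $E_1^h/P^h$ is smooth since $P^h$ is bounded away from zero, and the controls enter affinely in \eqref{model2-cond} and quadratically in $J$. Then, exactly as in Section \ref{sec-optimality-condition}, multiplying the linearized state equations by the adjoint variables $\lambda^{h_\delta}$ solving \eqref{adj}-\eqref{adj-cond}, integrating by parts and regrouping through the Hamiltonian $H$, one gets (writing $h_3:=h_{2,TF}$, $\hat h_3:=\hat h_{2,TF}$, $h_3^{\max}:=h_{2,TF}^{\max}$ for brevity)
\[
\mathrm{d}J(h_\delta;h-h_\delta)=\int_0^{T_f}\sum_{j=1}^{3}2C_j\big(h_{\delta,j}(t)-\hat h_j(\lambda^{h_\delta})(t)\big)\big(h_j(t)-h_{\delta,j}(t)\big)\,\d t ,
\]
with $\hat h_1,\hat h_2,\hat h_3$ as in the statement, because $\partial H/\partial h_j=2C_j\big(h_j-\hat h_j(\lambda^{h_\delta})\big)$.

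Next I would treat the nonsmooth penalty. Since $\|h-h_\delta\|_{L^1(0,T_f)}=\int_0^{T_f}\sum_{j=1}^{3}|h_j(t)-h_{\delta,j}(t)|\,\d t$ (up to an equivalent norm on $\R^3$), the subdifferential at $h_\delta$ of this convex map is the whole unit ball of $L^\infty(0,T_f,\R^3)$; combining the variational inequality above with this subdifferential and the normal cone of $\mathcal{U}$, and using a measurable selection, produces $\pi_j^\delta\in L^\infty(0,T_f)$ with $|\pi_j^\delta(\cdot)|\le1$ such that
\[
\int_0^{T_f}\sum_{j=1}^{3}2C_j\Big(h_{\delta,j}(t)-\hat h_j(\lambda^{h_\delta})(t)-\tfrac{\sqrt{\delta}\,\pi_j^\delta(t)}{2C_j}\Big)\big(h_j(t)-h_{\delta,j}(t)\big)\,\d t\ge0\qquad\forall\,h\in\mathcal{U}.
\]
Because each $h_j$ may be varied freely on sets of positive measure within $[0,h_j^{\max}]$, a Lebesgue-point argument turns this into the pointwise inequality $\big(h_{\delta,j}(t)-\tilde h_j(t)\big)\big(v-h_{\delta,j}(t)\big)\ge0$ for a.e.\ $t$, all $v\in[0,h_j^{\max}]$ and $j=1,2,3$, with $\tilde h_j:=\hat h_j(\lambda^{h_\delta})+\tfrac{\sqrt{\delta}\,\pi_j^\delta}{2C_j}$ (after replacing $\pi_j^\delta$ by $-\pi_j^\delta$, which is harmless since the bound $|\pi_j^\delta|\le1$ is unchanged). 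This is precisely the characterization of the metric projection onto $[0,h_j^{\max}]$, i.e.\ $h_{\delta,j}(t)=\mathcal{L}_j\big(\tilde h_j(t)\big)$, which is the claimed identity $h_\delta=\mathcal{L}\big(\hat h_1(\lambda^{h_\delta})+\tfrac{\sqrt{\delta}\pi_1^\delta}{2C_1},\,\hat h_2(\lambda^{h_\delta})+\tfrac{\sqrt{\delta}\pi_2^\delta}{2C_2},\,\hat h_{2,TF}(\lambda^{h_\delta})+\tfrac{\sqrt{\delta}\pi_3^\delta}{2C_3}\big)$.

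The step I expect to be the main obstacle is the rigorous justification that $h\mapsto J(h)$ is one-sided Gâteaux differentiable with derivative expressible through the adjoint system — that is, proving differentiability of $h\mapsto w^h$ and identifying the sensitivity equations, which is the delicate part given the ratio $E_1^h/P^h$ — together with the measurable-selection step that produces the $\pi_j^\delta$ and the passage from the integral to the pointwise inequality; once the variational inequality is in hand, the projection characterization is routine.
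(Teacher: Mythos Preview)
Your proposal is correct and follows exactly the route the paper indicates: the paper does not give an independent proof of this lemma but simply writes ``By the same argument as in Section \ref{sec-optimality-condition}, and using the projection map $\mathcal{L}$ on $\mathcal{U}$,'' and then states the result. Your outline supplies precisely those omitted details---the Gâteaux variation of $J$ expressed through $\partial H/\partial h_j=2C_j(h_j-\hat h_j)$ via the adjoint system, the subdifferential of the $L^1$ penalty producing the $\pi_j^\delta$ with $|\pi_j^\delta|\le1$, and the pointwise projection characterization---so the two approaches coincide.
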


By Lemmas \ref{lem-control-existence} and \ref{lem-control-existence1}, we are now ready to prove the existence and uniqueness of an optimal controller. Namely, we have the following theorem.
Namely, we have the following result.

\begin{theorem}\label{thm-control-existence}
	If $ \frac{T_f}{2}\sum_{j=1}^3 \frac{1}{C_j}$ is sufficiently small, there exists one and only one optimal controller $h^\ast$ in $\mathcal{U}$ minimizing $\mathcal{J}(h)$. 
\end{theorem}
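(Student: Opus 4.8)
The plan is to combine Ekeland's variational principle, already invoked above, with a Banach fixed point argument. Define the map $\mathcal{T}:\mathcal{U}\to\mathcal{U}$ by
\[
\mathcal{T}(h)=\mathcal{L}\Big(\hat h_1(\lambda^h),\hat h_2(\lambda^h),\hat h_{2,TF}(\lambda^h)\Big),
\]
where $\hat h_1(\lambda^h),\hat h_2(\lambda^h),\hat h_{2,TF}(\lambda^h)$ are the expressions of Lemma \ref{lem-control-existence1} \emph{without} the $\sqrt\delta$ perturbation, built from the state vector $w^h$ and the adjoint vector $\lambda^h$ associated with $h$. By the necessary optimality conditions of Section \ref{sec-optimality-condition}, every minimizer of $\mathcal{J}$ over $\mathcal{U}$ satisfies the characterization \eqref{control-carac}, i.e.\ is a fixed point of $\mathcal{T}$. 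I would therefore show that $\mathcal{T}$ has exactly one fixed point and that this point is a minimizer; uniqueness of the minimizer then follows immediately.

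First I would prove that $\mathcal{T}$ is a strict contraction on $(\mathcal{U},\|\cdot\|_{L^\infty(0,T_f)})$. Each $\mathcal{L}_j$ is non-expansive, so it suffices to estimate $\|\hat h_j(\lambda^h)-\hat h_j(\lambda^f)\|_{L^\infty}$. Each $\hat h_j$ is $\tfrac{1}{2C_j}$ times a product of boundary traces $\lambda_{v}(t,0)$ of adjoint variables and of the quantities $S E_1^h/P^h$, $E_2^h$, $E_{2,TF}^h$, all of which are uniformly bounded. Using the $L^\infty$ Lipschitz estimates of Lemma \ref{lem-control-existence} items 1 and 2 for $h\mapsto w^h$ and $h\mapsto\lambda^h$, one obtains a constant $C>0$, independent of $h,f\in\mathcal{U}$, with
\[
\|\mathcal{T}(h)-\mathcal{T}(f)\|_{L^\infty(0,T_f)}\le C\,T_f\sum_{j=1}^3\frac{1}{2C_j}\,\|h-f\|_{L^\infty(0,T_f)}.
\]
Hence, when $\tfrac{T_f}{2}\sum_{j=1}^3\tfrac{1}{C_j}$ is small enough that $C T_f\sum_j\tfrac{1}{2C_j}<1$, $\mathcal{T}$ is a contraction and, by the Banach fixed point theorem, admits a unique fixed point $h^\ast\in\mathcal{U}$.

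Next I would connect $h^\ast$ to the Ekeland near-minimizers. For $\delta>0$ let $h_\delta$ be as in \eqref{eq-20}--\eqref{eq-21}; since $\mathcal{J}\equiv+\infty$ off $\mathcal{U}$, $h_\delta\in\mathcal{U}$, and by Lemma \ref{lem-control-existence1} one has $h_\delta=\mathcal{T}_\delta(h_\delta)$, where $\mathcal{T}_\delta$ differs from $\mathcal{T}$ only by the additive term $\big(\tfrac{\sqrt\delta\,\pi_1^\delta}{2C_1},\tfrac{\sqrt\delta\,\pi_2^\delta}{2C_2},\tfrac{\sqrt\delta\,\pi_3^\delta}{2C_3}\big)$ inside $\mathcal{L}$, with $|\pi_j^\delta|\le1$. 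Using again the non-expansiveness of $\mathcal{L}$ together with the contraction estimate above,
\[
\|h_\delta-h^\ast\|_{L^\infty(0,T_f)}\le C T_f\sum_{j=1}^3\frac{1}{2C_j}\,\|h_\delta-h^\ast\|_{L^\infty(0,T_f)}+\sqrt\delta\sum_{j=1}^3\frac{1}{2C_j},
\]
so $\|h_\delta-h^\ast\|_{L^\infty(0,T_f)}\to0$, hence $h_\delta\to h^\ast$ in $L^1(0,T_f)$, as $\delta\to0$. Since $\mathcal{J}$ is lower semicontinuous for $L^1(0,T_f)$ convergence (Lemma \ref{lem-control-existence} item 3), passing to the limit in \eqref{eq-20} gives $\mathcal{J}(h^\ast)\le\liminf_{\delta\to0}\mathcal{J}(h_\delta)\le\inf_{h\in\mathcal{U}}\mathcal{J}(h)$, so $h^\ast$ minimizes $\mathcal{J}$ over $\mathcal{U}$. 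Any other minimizer satisfies \eqref{control-carac}, hence is a fixed point of $\mathcal{T}$, hence equals $h^\ast$, which proves uniqueness.

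The main technical obstacle is the contraction estimate in the second step: one has to propagate the Lipschitz dependence on $h$ through the coupled forward state system \eqref{model2-compact} and the coupled backward adjoint system \eqref{adj}--\eqref{adj-cond} — in particular keeping the bilinear terms $S E_1/P$ and the boundary traces $\lambda_{v}(t,0)$ controlled in $L^\infty$ — so that the resulting constant is exactly of the form $C T_f\sum_j\tfrac{1}{2C_j}$ and smallness of $\tfrac{T_f}{2}\sum_j\tfrac1{C_j}$ suffices to close the contraction; the $L^1$ and $L^\infty$ Lipschitz bounds of Lemma \ref{lem-control-existence}, valid for $T_f$ small, are the essential input that makes this bookkeeping possible.
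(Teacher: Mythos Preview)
Your proposal is correct and follows essentially the same route as the paper: define the optimality map $\mathcal{T}$ (the paper calls it $\mathcal{F}$), use the Lipschitz estimates of Lemma~\ref{lem-control-existence} to show it contracts on $\mathcal{U}$ with constant $CT_f\sum_j\frac{1}{2C_j}$, obtain the unique fixed point $h^\ast$, and then use the Ekeland near-minimizers $h_\delta$ from Lemma~\ref{lem-control-existence1} together with the non-expansiveness of $\mathcal{L}$ to show $h_\delta\to h^\ast$ and conclude by \eqref{eq-20}. Your explicit invocation of lower semicontinuity for the final passage to the limit, and your final remark that any other minimizer must satisfy \eqref{control-carac} and hence equal $h^\ast$, are slightly more detailed than the paper but add no new ideas.
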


\begin{proof}
	We star with the uniqueness by defining $\mathcal{F}:\mathcal{U}\longrightarrow\mathcal{U}$ by 
	$$
	\mathcal{F}(h)=\mathcal{L}\Big(\hat{h}_{1}( \lambda^h), \hat{h}_{2}( \lambda^h),\hat{h}_{2,TF}(\lambda^h)\Big)
	$$
	wherein $w^h$ and $\lambda^h$ are state and adjoint solutions corresponding to $h$ as in previous sections. Using the Lipschitz properties of $w^h$ and $\lambda^h$ (see Lemma \ref{lem-control-existence}),  for $h, \bar{h}\in\mathcal{U},$ we find $C>0$ such that for $T_f$ sufficiently small
	\begin{eqnarray*}
		\|\mathcal{L}_1(h_{1})-\mathcal{L}_1(\bar{h}_{1})\|_{L^\infty(0,T_f)} 
		&\le&\displaystyle\frac{C_0\max\left(M_0,M_0^2\right)}{2C_1}  \\
		&\times&\left(\|w^h-w^{\bar h} \|_{L^\infty(Q_{T_f})} +  \|\lambda^h-\lambda^{\bar h} \|_{L^\infty(Q_{T_f})}\right) \\
		&\le & \frac{C T_f}{2 C_1} \|h-\bar h \|_{L^\infty(0,T_f)}.
	\end{eqnarray*}
	Applying the same arguments for $\mathcal{L}_2$ and $\mathcal{L}_{2,TF}$, it comes
	\begin{eqnarray}
	\|\mathcal{F}(h)-\mathcal{F}(\bar{h})\|_{L^\infty(0,T_f)} \le CT_f\|h-\bar{h}\|_{L^\infty(0,T_f)}\Big(\frac{1}{2C_1}+\frac{1}{2C_2}+\frac{1}{2C_3}\Big)
	\label{F}
	\end{eqnarray}
	where the positive constant $C$ depends on the $L^\infty$ bounds on the state and the adjoint solutions and Lipschitz constants. Therefore, if $T_f < \frac{2}{C}(\frac{1}{C_1}+\frac{1}{C_2}+\frac{1}{C_3})^{-1},$ thus the map $\mathcal{F}$ 
	has a unique fixed point $h^\ast.$

	It remains to prove that this fixed point is an optimal controller. For that ends, we used the approximate minimizers $h_\delta$ from Ekerland's variational principle. From Lemma \ref{lem-control-existence1} and the contraction property of $\mathcal{F}, $ we have
	
	\begin{equation}\begin{array}{rl}
	& \Big\|\mathcal{F}(h_\delta)-\mathcal{L}\Big(\hat{h}_{1}(\lambda^{h_\delta})+\frac{\sqrt{\delta}\pi_1^{\delta}}{2C_1}, 
	\hat{h}_{2}(\lambda^{h_\delta})+\frac{\sqrt{\delta}\pi_2^{\delta}}{2C_2},
	\hat{h}_{2,TF}(\lambda^{h_\delta})+\frac{\sqrt{\delta}\pi_3^{\delta}}{2C_3}\Big)\Big\|_{L^\infty(0,T_f)}=\\
	&\Big\|\mathcal{L}\Big(\hat{h}_{1}( \lambda^{h_\delta}), \hat{h}_{2}( \lambda^{h_\delta}), \hat{h}_{2,TF}(\lambda^{h_\delta})\Big)\\
	&-\mathcal{L}\Big(\hat{h}_{1}(\lambda^{h_\delta})+\frac{\sqrt{\delta}\pi_1^{\delta}}{2C_1}, 
	\hat{h}_{2}( \lambda^{h_\delta})+\frac{\sqrt{\delta}\pi_2^{\delta}}{2C_2},
	\hat{h}_{2,TF}(\lambda^{h_\delta})+\frac{\sqrt{\delta}\pi_3^{\delta}}{2C_3} \Big)\Big\|_{L^\infty(0,T_f)}\\
	&\leq \sum_{j=1}^3 \Big\|\frac{\sqrt{\delta}\pi_j^{\delta}}{2C_j} \Big\|_{L^\infty(0,T_f)}\leq \sqrt{\delta}\Big(\frac{1}{2C_1}+ \frac{1}{2C_2}+\frac{1}{2C_3}\Big).
	\end{array}
	\label{22}
	\end{equation}
	
	Consequently, from (\ref{F}) and (\ref{22}), we have
	\begin{equation*}
	\begin{array}{ll}
	&  \|h^\ast-h_\delta\|_{L^\infty(0,T_f)}=\\
	&\Big\|\mathcal{F}(h^\ast)-\mathcal{L}\Big(\hat{h}_{1}(\lambda^{h_\delta})+\frac{\sqrt{\delta}\pi_1^{\delta}}{2C_1}, 
	\hat{h}_{2}(\lambda^{h_\delta})+\frac{\sqrt{\delta}\pi_2^{\delta}}{2C_2},
	\hat{h}_{2,TF}( \lambda^{h_\delta})+\frac{\sqrt{\delta}\pi_3^{\delta}}{2C_3} \Big)\Big\|_{L^\infty(0,T_f)}\\
	&\leq
	\|\mathcal{F}(h^\ast)-\mathcal{F}(h_\delta)\|_{L^\infty(0,T_f)}\\
	&+\Big\|\mathcal{F}(h_\delta)-\mathcal{L}\Big(\hat{h}_{1}(\lambda^{h_\delta})+\frac{\sqrt{\delta}\pi_1^{\delta}}{2C_1}, 
	\hat{h}_{2}(\lambda^{h_\delta})+\frac{\sqrt{\delta}\pi_2^{\delta}}{2C_2},
	\hat{h}_{2,TF}(\lambda^{h_\delta})+\frac{\sqrt{\delta}\pi_3^{\delta}}{2C_3}\Big)\Big\|_{L^\infty(0,T_f)}\\
	&\leq \left(C T_f\|h^\ast-h_\delta\|_{L^\infty(0,T_f)}+ \sqrt{\delta}\right) \Big(\frac{1}{2C_1}+\frac{1}{2C_2}+\frac{1}{2C_3}\Big).
	\end{array}
	\end{equation*}
	Since $C T_f\Big(\frac{1}{2C_1}+\frac{1}{2C_2}+\frac{1}{2C_3}\Big)$ is sufficiently small, it comes 
	$$
	\|h^\ast-h_\delta\|_{L^\infty(0,T_f)}\leq \sqrt{\delta}\Big[1-C T_f \sum_{j=1}^3 \frac{1}{2C_j} \Big]^{-1}  \sum_{j=1}^3 \frac{1}{2C_j},
	$$
	which gives $h_\delta\longrightarrow h^\ast$ in $L^\infty(0,T_f)$ (as $\delta\longrightarrow0$) and by \eqref{eq-20} 
	$$
	\mathcal{J}(h^\ast)=\inf_{h\in\mathcal{U}}\mathcal{J}(h).
	$$ 
\end{proof}


\section{Basic reproduction number of model (\ref{model1})} \label{basicreproduction}
Let $N(t)=\frac{S(t)}{P(t)}E_1(t)$ be the density of newly HIV infected at time $t$. Then from (\ref{model1}) one has
$$N(t)=E_1(t)=\int_0^\infty\beta(a)(i_1(t,a)+\varepsilon i_2(t,a) +\delta i_3(t,a))da $$ where $i_j(t,a)$'s are given by the resolution of the linearized system \eqref{model1} at the disease free equilibrium $E^0$. Then the Volterra formulation \eqref{eq-volterra} yields
\begin{equation}\label{compoute-R01}
N(t)= \int_0^t\beta(a) \left(D_1(a)E_1(t-a)+\varepsilon D_2(a)E_2(t-a) +\delta D_3(a)E_3(t-a) \right)\d a +N_0(t),
\end{equation}
with $N_0(t)=  \int_t^\infty \beta(a) \left(i_{10}(a-t) \frac{D_1(a)}{D_1(a-t)} + \varepsilon i_{20}(a-t) \frac{D_2(a)}{D_2(a-t)} + \delta i_{30}(a-t) \frac{D_3(a)}{D_3(a-t)} \right) \d a.$

Further, we have
\[\begin{split}
E_2(t)=& \int_0^t \gamma_1(a) D_1(a)E_1(t-a)\d a + H_2(t),\\
E_3(t)=&\int_0^t \gamma_2(a) D_2(a)E_2(t-a)\d a +\int_t^\infty \gamma_2(a) i_{20}(a-t) \frac{D_2(a)}{D_2(a-t)} \d a\\
=&\int_0^t E_1(t-a) U(a) \d a +H_3(t),
\end{split}
\]
with $H_2(t)= \int_t^\infty \gamma_1(a) i_{10}(a-t) \frac{D_1(a)}{D_1(a-t)} \d a$, $H_3(t)= \int_0^t \gamma_2(a) D_2(a) H_2(t-a)\d a +\int_t^\infty \gamma_2(a) i_{20}(a-t) \frac{D_2(a)}{D_2(a-t)} \d a$, and $U(a)= \int_0^a \gamma_1(a-\sigma) D_1(a-\sigma) \gamma_2(\sigma) D_2(\sigma) \d\sigma$.

Therefore, 
\[
\begin{split}
\int_0^t\beta(a) D_2(a)E_2(t-a) \d a=& \int_0^t E_1(t-a) \int_0^a  D_1(a-\sigma) \gamma_1(a-\sigma)  \beta(\sigma) D_2(\sigma) \d \sigma  \d a\\
& +\int_0^t\beta(a) D_2(a) H_2(t-a) \d a ,
\end{split}
\]
and
	\[
	\begin{split}
	&\int_0^t\beta(a) D_3(a)E_3(t-a) \d a=\int_0^tE_1(t-a)\int_0^a U(a-\sigma) \beta(\sigma) D_3(\sigma)\d\sigma\d a\\
	&+ \int_0^t \beta(a)D_3(a) H_3(t-a) \d a.
	\end{split}\]
Consequently, \eqref{compoute-R01} rewrites 
\begin{equation*}
\begin{split}
N(t)=& \int_0^t K(a) E_1(t-a) \d a +N_0(t)\\
& +\varepsilon\int_0^t\beta(a) D_2(a) H_2(t-a) \d a + \delta\int_0^t \beta(a)D_3(a) H_3(t-a) \d a,
\end{split}
\end{equation*}
wherein 
\[
K(a)= \beta(a)D_1(a)+\varepsilon  \int_0^a  D_1(a-\sigma) \gamma_1(a-\sigma)  \beta(\sigma) D_2(\sigma) \d \sigma  \d a +\delta \int_0^a U(a-\sigma) \beta(\sigma) D_3(\sigma)\d\sigma.
\]	
From the above formulation, the basic reproduction number is calculated as 
\[
	\begin{split}
	R_0=& \int_0^\infty K(a)\d a\\
	=& \int_0^\infty \beta(a)D_1(a)\d a+\varepsilon \int_0^\infty \int_0^a  D_1(a-\sigma) \gamma_1(a-\sigma)  \beta(\sigma) D_2(\sigma) \d \sigma\d a \\
	&+\delta \int_0^\infty \int_0^a   \beta(\sigma) D_3(\sigma) U(a-\sigma) \d\sigma  \d a\\
	=&  \int_0^\infty\beta(a)D_1(a)\d a+ \varepsilon \int_0^\infty \beta(a) D_2(a) \int_a^\infty  D_1(\sigma-a) \gamma_1(\sigma-a)   \d \sigma \d a\\
	&+\delta \int_0^\infty \beta(a) D_3(a)\int_a^\infty U(\sigma-a)\d\sigma \d a  \\
	=&  \int_0^\infty\beta(a)D_1(a)\d a+ \varepsilon \int_0^\infty \beta(a) D_2(a) \d a \int_0^\infty  D_1(\sigma) \gamma_1(\sigma)   \d \sigma  \\
	&+\delta \int_0^\infty \beta(a) D_3(a) \d a \int_0^\infty \gamma_2(\sigma) D_2(\sigma)\d \sigma \int_0^\infty \gamma_1(\tau) D_1(\tau)\d\tau,\\
	=&\Omega_1 +\varepsilon \Gamma_1\Omega_2 +\delta \Gamma_1 \Gamma_2\Omega_3.
	\end{split}
	\]

\end{document}